\theoremstyle{plain}
\newtheorem{thm}{Theorem}[section]
\newtheorem{lemma}[thm]{Lemma}
\newtheorem{cor}[thm]{Corollary}
\newtheorem{prop}[thm]{Proposition}
\theoremstyle{definition} 
\newtheorem{defn}[thm]{Definition}
\newtheorem{claim}[thm]{Claim}
\newtheorem{ex}[thm]{Example}
\newtheorem{remark}[thm]{Remark}
\newtheorem{construction}[thm]{Construction}
\newcommand{\Z}{\mathbb{Z}}
\newcommand{\T}{\mathcal{T}}
\newcommand{\fourl}{\frac{\ell}{4}}
\DeclareMathOperator{\Can}{Cancel}
\DeclareMathOperator{\Bal}{Bal}
\DeclareMathOperator{\diam}{diam}
\DeclareMathOperator{\Stab}{Stab}
\DeclareMathOperator{\sh}{Sh}
\title{Random groups at density $d<3/14$ act non-trivially on a CAT(0) cube complex}
\author{MurphyKate Montee}
\date{\today}
\email{mmontee@carleton.edu}
\address{Department of Mathematics and Statistics, Carleton College}
\begin{document}
\maketitle

\begin{abstract}
For random groups in the Gromov density model at $d<3/14$, we construct walls in the Cayley complex $X$ which give rise to a non-trivial action by isometries on a CAT(0) cube complex. This extends results of Ollivier-Wise and Mackay-Przytycki at densities $d<1/5$ and $d<5/24$, respectively. We are able to overcome one of the main combinatorial challenges remaining from the work of Mackay-Przytycki, and we give a construction that plausibly works at any density $d<1/4$.
\end{abstract}


\section{Introduction}
The study of random groups is one way to answer questions of the form \emph{What does a `typical' group look like?} There are many models of random groups; in this paper, we study groups in the \emph{Gromov density model.}

\begin{defn}
	A \emph{random group $G(n, d, \ell)$ in the Gromov density model} with density $d \in (0, 1)$ is a group with presentation $G = G(n, d, \ell) =\langle S | R \rangle$, where $S$ is a generating set of size $n \geq 2$ and $R$ is a collection of $(2n-1)^{d\ell}$ cyclically reduced words in $S \cup S^{-1}$ of length $\ell$ chosen uniformly at random from the set of all such words. A random group at density $d$ satisfies property $P$ \emph{with overwhelming probability}, abbreviated w.o.p., if the probability of $G$ satisfying $P$ tends to 1 as $\ell \to \infty$.
\end{defn}

Note that the relators are chosen independently, so a priori it is possible that some relator may be chosen more than once. However, when $d<1/2$ with overwhelming probability this does not occur.

These groups satisfy several properties, including the following:
\begin{itemize}
	\item For densities $d>1/2$, a random group is w.o.p. trivial or $\Z/2\Z$ \cite{Gro, Oll05}.
	\item For densities $d<1/2$, a random groups is w.o.p. infinite hyperbolic, torsion-free, with contractible Cayley complex \cite{Gro, Oll04}.
	\item For densities $d>1/3$, w.o.p. a random group satisfies Property (T) \cite{Zuk2003, KK}.
	\item For densities $d<5/24$, w.o.p. a random group acts non-trivially (e.g. without a global fixed point) on a CAT(0) cube complex \cite{OW, MP}. Furthermore, \cite{OW} showed that this action is free and proper for densities $d<1/6$.
\end{itemize}

Since random groups are infinite, satisfying Property (T) and acting non-trivially on a CAT(0) cube complex are mutually exclusive \cite{NR}. This raises the natural question: What happens in densities between $5/24$ and $1/3$? This paper builds on the previous work of Ollivier-Wise and Mackay-Przytycki to prove the following theorem.

\begin{thm}\label{main theorem}
At density $d<3/14$, a random group in the Gromov density model w.o.p. acts non-trivially cocompactly on a CAT(0) cube complex.
\end{thm}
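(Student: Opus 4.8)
The plan is to follow the Ollivier--Wise wall-space strategy, which reduces everything to building a \emph{system of walls} in the Cayley complex $X$ of $G(n,d,\ell)$ that is $G$-invariant, so that some pair of points is separated by finitely many walls and some hyperplane-dual wall is not stabilized by a point. Concretely, following Sageev's construction, a collection of walls satisfying (i) each wall separates $X$ into exactly two halfspaces, (ii) any two vertices are separated by only finitely many walls, and (iii) the walls are \emph{locally finite} in the appropriate sense, yields an action of $G$ on a CAT(0) cube complex; cocompactness comes from the fact that $G$ acts cocompactly on $X$ and the wall system is constructed $G$-equivariantly from finitely many orbits of local data. Non-triviality (no global fixed point) is obtained by exhibiting a single wall whose stabilizer does not fix a point, or equivalently by showing that the wall pseudometric is unbounded; in the random setting this follows once we know walls ``go to infinity'' quasi-isometrically, which is where the hyperbolicity and isoperimetry of $X$ at $d<1/2$ enter.

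The core of the argument, and the main obstacle, is the combinatorial construction of the walls themselves. Here the strategy of Mackay--Przytycki is to define \emph{hypergraphs}: choose in each relator (a $2\ell$-gon in $X$, or $\ell$-gon depending on convention) a $G$-invariant pattern of chords connecting midpoints of edges, and glue these chords across edges shared by adjacent $2$-cells to form the walls. The pattern in a single relator is chosen by an averaging/counting argument so that it is preserved by the (trivial, w.o.p.) symmetry group of a random relator, and the key requirement is that the resulting hypergraphs are \emph{embedded trees} (no self-intersections, no loops) in $X$. Establishing this requires controlling how hypergraph segments travel through \emph{collared diagrams}: if a hypergraph fails to be a tree, one extracts a reduced disc diagram whose boundary is carried by hypergraph segments, and the isoperimetric inequality for random groups forces this diagram to contain many relators with large mutual overlaps (pieces), contradicting the $C'(\lambda)$-type small-cancellation bound $\lambda \approx d$ once the combinatorics of which edges the chords hit is good enough. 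At $d<1/5$ Ollivier--Wise use a crude chord pattern; at $d<5/24$ Mackay--Przytycki refine it but are blocked by certain ``hexagonal'' configurations of three relators; the new input here is a better chord-placement scheme that neutralizes exactly those configurations, pushing the threshold to $3/14$.

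So concretely I would proceed as follows. First, recall the standing facts at $d<1/2$: $X$ is simply connected, $G$ is hyperbolic and torsion-free, random relators satisfy a $C'(2d+\epsilon)$ small-cancellation-like condition and the linear isoperimetric inequality with sharp constant (Ollivier's local-to-global and the ``Isoperimetric inequality'' of random groups). Second, fix the combinatorial wall pattern: in an abstract $\ell$-gon, designate the chord structure by a fixed rule invariant under the cyclic/dihedral symmetries that survive in a random relator (w.o.p. this group is trivial, so any rule works, but one records the rule to define hypergraph segments uniformly), and verify the local gluing rule is consistent. Third --- the heart --- prove the \emph{embedding theorem}: w.o.p. every hypergraph is a tree embedded in $X$. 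This is by contradiction: a failure yields a reduced \emph{hypergraph diagram} $D\to X$, one classifies its possible shapes (this is where $2$-cell count, boundary-length, and the allowed overlaps are pushed through the combinatorics), and one shows each shape violates isoperimetry at $d<3/14$; the improvement over $5/24$ is isolating and killing the previously obstructive three-relator configuration by the refined chord rule. Fourth, run Sageev: the embedded trees are the walls, each separates $X$ into two halfspaces, $G$ permutes them with finitely many orbits; deduce the cube-complex action, with cocompactness from finiteness of orbits of wall-intersection patterns. Fifth, prove non-triviality: use hyperbolicity to show a geodesic in $X$ crosses infinitely many walls (walls are quasiconvex and ``spread out''), so the wall metric is unbounded and $G$ has no global fixed point. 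I expect essentially all the difficulty to live in the third step --- the case analysis of hypergraph diagrams and the isoperimetric bookkeeping that certifies the $3/14$ threshold --- while steps one, four, and five are by now standard machinery that can be quoted or adapted with minor changes from \cite{OW} and \cite{MP}.
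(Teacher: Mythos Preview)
Your high-level skeleton (build $G$-invariant walls, prove they are embedded trees, apply Sageev, check non-triviality and cocompactness) is correct and matches the paper's outline. But your description of the ``heart'' --- step three --- misses the central idea and, as written, would not get past density $1/5$.

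You propose to ``fix the combinatorial wall pattern: in an abstract $\ell$-gon, designate the chord structure by a fixed rule invariant under the cyclic/dihedral symmetries.'' This is exactly the Ollivier--Wise construction, and it is known to fail above $d=1/5$: the hypergraphs are no longer embedded trees. The whole point of Mackay--Przytycki, and of this paper, is that the chord pattern in a $2$-cell $C$ is \emph{not} intrinsic to $C$; it depends on which \emph{tile} $C$ sits in. A tile is a small subcomplex $T\subset X$ with large internal cancellation ($\Can(T)\geq \tfrac{\ell}{4}(|T|-1)$), built inductively by gluing together $2$-cells along overlaps of length $>\ell/4$. One first constructs a $G$-equivariant tile collection, then defines walls tile-by-tile: start from antipodal chords, and whenever two tiles $T,T'$ are merged along a long intersection, \emph{unbend} the chords near the ends of $T\cap T'$ so that every wall-path in $T\cup T'$ has endpoints at distance at least $\Bal(T\cup T')=\tfrac{\ell}{4}(|T\cup T'|+1)-\Can(T\cup T')$. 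Without this tile machinery and the balance invariant, the embeddedness proof simply does not go through above $1/5$.

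Your account of the $5/24\to 3/14$ improvement is also off. Mackay--Przytycki are not blocked by a ``hexagonal configuration of three relators''; they already work with tiles. What limits them is that as $d$ grows, tiles get larger and the intersection $T\cap T'$ of two tiles is a tree rather than a path, so it is unclear \emph{where} to unbend. The new input in this paper is a dichotomy: either $T\cap T'$ is a \emph{round} tree (small diameter relative to size), in which case concatenated walls are automatically balanced and no unbending is needed, or it is a \emph{long} tree, in which case any diameter singles out well-defined regions $\alpha_\pm$ on which to perform the swap. Proving balance after this unbending is a case analysis (seven cases for how a wall-path can sit in $T\cup T'$), and two of those cases currently require $|T'|\le 3$, which is what forces the $3/14$ threshold. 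None of this structure appears in your proposal.
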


Furthermore, the construction given in this paper to prove this theorem plausibly works for any density $d<1/4$, but combinatorial complexities prevent us from proving this definitively. This is a natural threshold for this method for several reasons, the most significant of which is that tiles (small spots of local positive curvature) may not be finite in densities above $1/4$. 

Other work on this problem has approached this via a variation on the Gromov model of random groups. The \emph{$k$-gonal} (also called the $k$-angular) models of random groups were introduced in \cite{ard} as an extension of the \emph{triangular model} (see \cite{Zuk2003}) and \emph{square model} (see \cite{Odr16, Odr19}). In these models, instead of fixing the number of generators and taking the length of the relators to infinity, we fix the length of the relators at $k$ and consider what happens as the number of generators is increased. As $k$ tends to infinity, this approximates the Gromov model. It has been shown that random groups in these models satisfy Property (T) for densities $d>1/3$ (see \cite{ashcroft21, montee21,  Odr19}). Known bounds for acting non-trivally on a CAT(0) cube complex are more varied, and can be found in \cite{Odr19, Duong}.

The method of proof in this paper follows the ideas of \cite{OW, MP}; specifically, we construct a CAT(0) cube complex by the method of Sageev \cite{Sageev}. Hyperplanes in the cube complex correspond to walls in the Cayley complex $X$ of $G$, and the difficult part of the proof is constructing these walls. Ollivier-Wise do this by immersing a graph in $X$ connecting antipodal edge midpoints in every 2-cell of $X$. At density $d<1/5$ the connected components of this graph are embedded quasi-convex trees with cocompact stabilizers which essentially separate $X$, but at density $d>1/5$ they are neither trees nor embedded.

Mackay-Przytycki found a more subtle construction of a wall system. Rather than construct an immersed graph one 2-cell at a time, they examined small complexes in $X$ which exhibit (combinatorial) positive curvature, called \textit{tiles.} When two 2-cells $T$ and $T'$ are glued together along a long intersection, some of the wall-paths that result from concatenation are sharply bent: in particular, the paths which pass through edge midpoints near the endpoints of $T \cap T'$. Roughly, Mackay-Przytycki `unbend' these walls according to the following rule:

Given a path $T\cap T'$ which is longer than $\fourl$, let $\alpha_\pm$ be the subpaths of $T\cap T'$ which are the complement of the $\fourl$-neighborhoods of each endpoint. Let $s_\pm$ be the symmetry of $\alpha_\pm$ which swaps its endpoints. Then for any wall in $C$ which has an endpoint $x \in \alpha_\pm$, replace that wall with one connected to $s_{\alpha_\pm}(x)$ (see Figure \ref{fig:MPconstruction}.) Mackay-Przytycki show that the endpoints of the resulting wall in $T \cup T'$ are separated by at least
	\[
		\Bal(T\cup T') = \fourl(|T\cup T'| + 1) - \Can(T\cup T').
		\]

\begin{figure}
	\includegraphics[width = .75\textwidth]{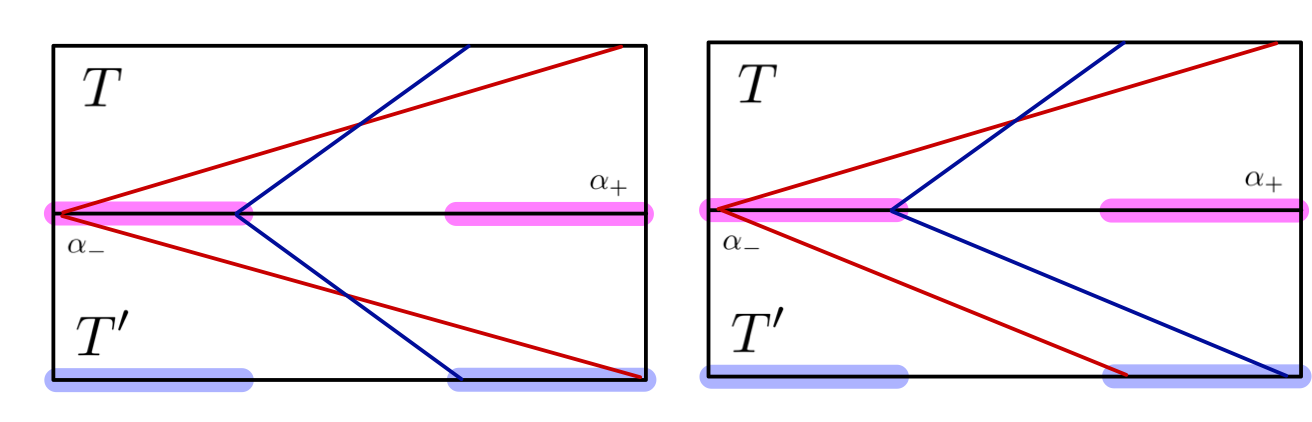}
	\caption{The construction from Mackay-Przytycki}
	\label{fig:MPconstruction}
\end{figure}

As the density approaches $1/4$, the size of the tiles grows unboundedly; as a result, the intersection of two tiles becomes more combinatorially complex.  The crucial thing when constructing walls which are not sharply bent is to understand which edges in $T\cap T'$ give rise to a bent wall in $T\cup T'$. Mackay-Przytycki proved that the intersection $T\cap T'$ is necessarily a tree (see Lemma \ref{lemma:trees}). In this paper, we show that there are fundamentally two possibilities: either this tree is \emph{long} (e.g. the diameter of the tree is large with respect to the total size of the tree) or \emph{round}.  In the key lemma of this paper, we show that the concatenation of tile-walls in $T$ and $T'$ results in unbalanced walls in $T\cup T'$ only when $T\cap T'$ is a long tree. Furthermore, in this case a (in fact, \emph{any}) diameter of $T\cap T'$ can be used to identify regions $\alpha_\pm$ through which any unbalanced wall in $T\cup T'$ must pass. 

We then consider the paths $\alpha^i_\pm = \alpha_\pm \cap C_i$ for each 2-cell $C_i \in T'$, and replace any wall path in $C_i$ which connects to a point $x \in \alpha_\pm^i$ with one connecting to $s_{\alpha^i_\pm}(x)$. To prove that the resulting walls in $T\cup T'$ are not bent, we check several cases, depending on the ways that a wall-path $\gamma$ might lie in $T\cup T'$. For two of these cases, the proof relies on the inductive structures of $T$ and $T'$. Thus, while this construction resolves the combinatorial problem of how to unbend a wall when the intersection of two tiles is complex, it raises a new question: \textit{How can one unbend walls when each tile is itself complex?} To extend the result of Theorem \ref{main theorem} to all densities $d<1/4$, one would need to complete the proof for arbitrary sized tiles in these two cases.

Finally, we prove that the concatenation of balanced tile-walls constructed in this manner results in embedded trees. This proof requires considering the ways that two adjacent tiles might share 2-cells.

\subsection{Organization}
The rest of this paper is organized as follows: In Section \ref{sec:background}, we review the Isoperimetric Inequality for random groups and a key generalization to non-planar diagrams. In Section \ref{sec:tiles} we define potiles and explicitly construct a tile assignment. In Section \ref{sec:walls} we construct the walls within each tile, and in Section \ref{sec:trees} we show that these are embedded trees. Finally, in Section \ref{sec:cubulation} we prove that these walls are sufficient to apply Sageev's construction, and get an action of the random group on a CAT(0) cube complex.

\subsection{Acknowledgements}
The author would like to thank her thesis advisor, Danny Calegari, for many conversations and guidance. She would also like to thank Piotr Przytycki for comments and questions that greatly improved this paper, and the anonymous reviewer for a thorough reading and comments which greatly clarified the writing. This material is based upon work supported by the National Science Foundation Graduate Research Fellowship under Grant No. DGE 1144082.

\section{Background and Definitions}\label{sec:background}

From now on, let $G = \langle S |R\rangle$ be a random group of density $d$ with word length $\ell$, and let $X$ be the Cayley 2-complex of $G$. By subdividing edges in $X$, we may assume that $\ell$ is a multiple of 4.

\begin{defn} Let $Y$ be a 2-complex.  The \emph{size} of $Y$, denoted $|Y|$, is the number of 
	2-cells in $Y$.  The \emph{cancellation} of $Y$ is 
	\[
	\Can(Y) = \sum_{\substack{e \mbox{ a $1$-cell} \\ \mbox{of } Y}} (\deg(e) - 1).
	\]
\end{defn}

Notice that if $\{Y_i\} \subset X$ are subcomplexes equal to the closure of their 2-cells 
and sharing no 2-cells, then 
\[
\Can \left( \bigcup_i Y_i \right) \geq 
\sum_i\left( \Can(Y_i) + 
\frac{1}{2}\left| Y_i\cap \bigcup_{j\neq i} Y_j \right| \right)
\]
Further, this is an equality if
$i\leq 2$, and an inequality if there exists an edge in $\bigcup_i Y_i$ which lies in at least three distinct $Y_i$.

We can think of $\Can(Y)$ as a combinatorial proxy for the curvature of $Y$. For example, if $Y$ is a disc diagram with large cancellation, then $|\partial Y|$ is, roughly speaking, small with relation to $|Y|$; in other words, $Y$ has positive curvature. This relationship is made explicit in Proposition \ref{IPIdisks}, and is generalized to non-planar diagrams in Proposition \ref{IPI}.

\begin{prop}[\cite{Oll07} Theorem 2]\label{IPIdisks}
	For each $\varepsilon > 0$, w.o.p. there is no disk diagram $D$ fulfulling $R$ and satisfying
	\[
	\Can(D) > (d + \varepsilon)|D|\ell.
	\]
\end{prop}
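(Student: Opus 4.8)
The proof is a \emph{first-moment} (union-bound) argument: I would show that the expected number of diagrams fulfilling $R$ with $\Can(D) > (d+\varepsilon)|D|\ell$ tends to $0$ as $\ell \to \infty$. Fix the number of cells $k = |D|$. I would enumerate abstract diagrams with $k$ cells by building them up one $2$-cell at a time, $C_1,\ldots,C_k$, writing $D_i = C_1 \cup \cdots \cup C_i$; at step $i$ the cell $C_i$ is glued to $D_{i-1}$ along a subcomplex of $\partial C_i$ consisting of $a_i$ arcs containing $m_i$ edges in total. Counting incidences of edges with cells gives $\sum_i m_i = \Can(D)$ for any such order, and a short Euler-characteristic computation shows $\sum_i a_i$ is $O(k)$.

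For the probabilistic estimate, adding $C_1$ contributes a factor $2\ell|R|$ (choice of relator, starting letter, orientation). For $i \ge 2$, the attaching arcs of $C_i$ carry $m_i$ letters already fixed by the relators chosen for $D_{i-1}$; since a uniformly random cyclically reduced word of length $\ell$ agrees with a prescribed pattern on a set of $m_i$ positions with probability at most $(2n-1)^{-m_i}$ times a factor $2^{O(a_i)}$ coming from the reduced-word constraint at the arc endpoints (here one uses that $D$ is \emph{reduced}, to rule out forced agreements), and since there are at most $2\ell|R|$ choices of (relator, starting letter, orientation) for $C_i$ and at most $(k\ell)^{O(a_i)}$ choices for where the arcs attach, step $i$ contributes a factor at most $|R|\cdot (2n-1)^{-m_i}\cdot (k\ell)^{O(a_i)+O(1)}$. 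Multiplying over $i$, using $|R| = (2n-1)^{d\ell}$, $\sum m_i = \Can(D)$ and $\sum a_i = O(k)$, the total contribution of diagrams of size $k$ is at most
\[
(2n-1)^{dk\ell - \Can(D)}\,(k\ell)^{O(k)} \;\le\; \left( (2n-1)^{-\varepsilon \ell}(k\ell)^{O(1)}\right)^{k},
\]
using $\Can(D) > (d+\varepsilon)k\ell$. Since $(2n-1)^{-\varepsilon\ell}$ eventually beats every fixed power of $\ell$, for $\ell$ large each term with a fixed $k$ tends to $0$; summing over $k$ and letting $\ell \to \infty$ then gives the claim w.o.p. by Markov's inequality.

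The probabilistic core above is routine; the delicate part — and the place where I expect the real work to be, and which is exactly what is carried out in \cite{Oll07} — is the combinatorial bookkeeping: (i) a van Kampen diagram is only a \emph{singular} disk, so one must fix a cell order that keeps the attaching complexity $a_i$ and the number of arc-position choices genuinely under control in the presence of pinch points, spurs, and edges of degree $\ge 3$; and (ii) because $k = |D|$ is unbounded, one must ensure the first-moment sum over all diagram sizes actually converges and vanishes as $\ell\to\infty$, which requires the per-cell combinatorial factors to be only polynomial in $\ell$, with degree independent of the global geometry of $D$, rather than exponential — a naive count of diagram shapes (say via Euler's formula and the number of planar maps with a given edge count) is too lossy here, so this refined enumeration is the heart of the argument.
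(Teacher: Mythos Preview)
The paper does not prove this proposition at all: it is simply quoted as \cite{Oll07}, Theorem~2, and used as a black box. So there is no ``paper's own proof'' to compare your attempt against.

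That said, your sketch is indeed the shape of Ollivier's argument: a first-moment/union bound, with the expected number of fulfilled diagrams of a given combinatorial type controlled by $(2n-1)^{dk\ell - \Can(D)}$ up to polynomial-in-$\ell$ combinatorial factors. You have also correctly flagged the two genuinely delicate points that Ollivier has to work for --- controlling the attaching complexity when the diagram is singular, and making the per-cell combinatorial factor only polynomial in $\ell$ so that the sum over $k$ converges. One point you pass over lightly is the dependence between cells carrying the \emph{same} relator: when two cells of $D$ are labeled by the same $r\in R$, the events ``the letters along the glued arcs match'' are no longer independent across cells, and the naive $(2n-1)^{-m_i}$ bound must be justified by conditioning on which cells share a relator and which do not (this is where Ollivier's careful case analysis enters). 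Your appeal to $D$ being reduced addresses cancelling pairs but not this correlation issue in general. With that caveat, your outline matches the literature proof.
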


A 2-complex $Y$ is \emph{fulfilled} by a set of relators $R$ if there is a combinatorial map from $Y$ to the presentation complex $X/G$ that is locally injective around edges (but not necessarily around vertices). In particular, every subcomplex of $X$ is fulfilled by $R$.

In the context of random groups, a consequence of the Isoperimetric Inequality (Proposition \ref{IPIdisks}) is the following:

\begin{cor}[\cite{OW}, Proposition 2.10; \cite{MP} Lemma 2.3, Corollary 2.5]\label{cor:geodesics}
	Let $d< 1/4$. Then w.o.p. there is no embedded closed path of length $< \ell$, and every embedded path of length $\leq \ell/2$ is geodesic in $X^{(1)}$.
\end{cor}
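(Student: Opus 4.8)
The plan is to derive both statements from the Isoperimetric Inequality (Proposition~\ref{IPIdisks}) by a standard minimal-area disk diagram argument. First I would handle the claim that there is no embedded closed path of length $<\ell$. Suppose $\gamma$ is such a path in $X^{(1)}$; since $X$ is simply connected, $\gamma$ bounds a disk diagram $D$ mapping to $X$. Take $D$ of minimal area among all diagrams bounded by $\gamma$; then $D$ is reduced (no two adjacent 2-cells map to inverse relators), so by van Kampen's lemma it is fulfilled by $R$. A reduced disk diagram with $|D|\geq 1$ has at least one 2-cell, and one checks that $|\partial D|\geq \ell|D| - 2\Can(D)$ (each 2-cell contributes $\ell$ to the total boundary length, and every internal edge, counted with multiplicity $\deg(e)-1$ beyond the first, removes $2$ from the boundary count). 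Combined with $|\partial D|=\length(\gamma)<\ell$ this forces $\Can(D) > \frac{1}{2}(\ell|D|-\ell) = \frac{\ell}{2}(|D|-1)$. If $|D|\geq 2$ this already contradicts Proposition~\ref{IPIdisks} (with $\varepsilon$ small, since $\frac{|D|-1}{2|D|}\geq \frac14 > d$), and the case $|D|=1$ is impossible because a single relator is cyclically reduced of length $\ell>$ the length of $\gamma$, contradicting that $\gamma$ is its full boundary (or more simply, $|\partial D| = \ell$, not $<\ell$).

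For the second statement, suppose $p$ is an embedded path of length $\leq \ell/2$ that is not geodesic in $X^{(1)}$, and let $q$ be a geodesic with the same endpoints, so $\length(q)<\length(p)\leq \ell/2$. The concatenation $p\bar q$ is a closed path of length $<\ell$; if it were embedded we would contradict the first part, but $p\bar q$ need not be embedded. The fix is to take an innermost subpath: among all pairs of an embedded non-geodesic path of length $\leq \ell/2$ and a geodesic joining its endpoints, choose one minimizing $\length(p)+\length(q)$. Then $p$ and $q$ intersect only at their common endpoints (any interior intersection point would split off a shorter counterexample, using that subpaths of geodesics are geodesic and subpaths of embedded paths are embedded), so $p\bar q$ is an embedded closed path of length $<\ell$, contradicting the first part. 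One should be slightly careful that $p$ and $q$ are genuinely distinct as edge-paths (they are, since they have different lengths) and that the closed loop is not backtracking along a shared initial segment — but any maximal shared initial (or terminal) segment can be peeled off, again reducing to a smaller counterexample, so the minimal one has $p$ and $q$ meeting only at endpoints.

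The main obstacle I anticipate is purely bookkeeping: making the reduction to a diagram with $p,q$ meeting only at their endpoints fully rigorous, i.e. carefully enumerating how two paths of bounded length can overlap (sharing initial segments, terminal segments, or transverse interior points) and checking in each case that one extracts a strictly smaller counterexample of the same type. The geometric input — that a minimal-area reduced disk diagram satisfies $|\partial D|\geq \ell|D|-2\Can(D)$ and hence, via Proposition~\ref{IPIdisks} with $d<1/4$, cannot have small boundary — is essentially immediate once set up. I would also note that everything here is "w.o.p." only through the single invocation of Proposition~\ref{IPIdisks}; no further probabilistic input is needed, and the hypothesis $d<1/4$ enters exactly to guarantee $d+\varepsilon<1/4\leq \frac{|D|-1}{2|D|}$ for all $|D|\geq 2$.
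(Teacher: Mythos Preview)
The paper does not include its own proof of this corollary; it is cited from \cite{MP} (Lemma~2.3 and Corollary~2.5) without argument. Your proposal is the standard derivation and is correct: both claims follow from Proposition~\ref{IPIdisks} via a minimal-area reduced disk diagram, using the identity $|\partial D|=\ell|D|-2\Can(D)$ for planar diagrams together with $\tfrac{|D|-1}{2|D|}\geq \tfrac14$ for $|D|\geq 2$, and the reduction of the second statement to the first by a minimal counterexample is routine once the case analysis (interior intersections, shared initial/terminal segments) is written out.
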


The following Proposition generalizes Proposition \ref{IPIdisks} in the situation that $Y$ is non-planar. A 2-complex $Y$ is \emph{$(K, K')$-bounded} if $|Y|\leq K$ and $Y$ is obtained from the disjoint union of its 2-cells by gluing them along $\leq K'$ subpaths of their boundary paths. 

\begin{remark}\label{rem:maxtilesize}
	As is pointed out in \cite{MP} Remark 3.3, Corollary \ref{cor:geodesics} implies that when $d < 1/4$, if $Y\subset X$ with $|Y|\leq K$ then $Y$ is $(K, \frac{1}{2}K(K-1))$-bounded.
\end{remark}

\begin{prop}[\cite{Odr} Theorem 1.5]\label{IPI} For any $K, K'$ and $\epsilon>0$, 
	w.o.p. there is no $(K, K')$-bounded 2-complex $Y$ fulfilling $R$ and satisfying 
	\[
	\Can(Y) > (d+\epsilon)|Y|l.
	\]
\end{prop}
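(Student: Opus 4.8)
The plan is to run the first-moment (union-bound) argument that underlies Ollivier's isoperimetric inequality, Proposition~\ref{IPIdisks}, using $(K,K')$-boundedness to keep the number of relevant combinatorial configurations polynomial in $\ell$ so that it is overwhelmed by a single exponentially small probability. Fixing $K$, $K'$, and $\epsilon$, I would bound the probability that a ``bad'' $Y$ exists by summing over all possible shapes. Such a $Y$ is determined by two pieces of data: (i) its \emph{combinatorial type together with metric data} --- the number $m\le K$ of $2$-cells, and for each of the $\le K'$ gluing subpaths its endpoints in the two relevant boundary circles of length $\ell$ together with its orientation; and (ii) a labeling of the $2$-cells of $Y$ by relators of $R$, each with a basepoint and orientation, together with the partition of the $2$-cells into $m'\le m$ classes recording which ones use the same relator. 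There are at most $C(K,K')\cdot\ell^{O(K')}$ choices in (i) and at most $m^{m}\,|R|^{m'}(2\ell)^{m}$ in (ii); the crucial point is that (i) contributes only a polynomial factor in $\ell$, and that by definition of fulfilling $R$ this data is precisely what is needed to specify a map $Y\to X/G$ that is locally injective around edges.

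The heart of the argument is to estimate, for one fixed configuration, the probability that the random set $R$ actually realizes it, i.e.\ that the chosen relators are simultaneously consistent with all the prescribed gluings. I would pass to the complex $Y'$ obtained from $Y$ by also identifying $2$-cells lying in a common class; then $|Y'|=m'$ and $\Can(Y')\ge\Can(Y)$, since merging $2$-cells can only raise edge degrees. The probability in question equals $\mu'/N_\ell^{m'}$, where $N_\ell=(2n-1)^{\ell}(1+o(1))$ is the number of cyclically reduced words of length $\ell$ and $\mu'$ is the number of edge-labelings $E(Y')\to S\cup S^{-1}$ for which every $2$-cell boundary is cyclically reduced. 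To bound $\mu'$ I would process the $2$-cells one at a time and read each boundary circle letter by letter: a position whose edge is appearing for the first time contributes at most $2n-1$ choices (at most $2n$ at the basepoint, owing to the cyclic closure), while a position whose edge has already been labeled contributes at most one; since each edge of $Y'$ is ``fresh'' for exactly one $2$-cell and the number of edges is $|E(Y')|=m'\ell-\Can(Y')$ (from $\sum_e\deg(e)=m'\ell$), this yields $\mu'\le 2^{m'}(2n-1)^{m'\ell-\Can(Y')}$. Dividing by $N_\ell^{m'}$ shows the configuration is realized with probability at most $4^{K}(2n-1)^{-\Can(Y')}\le 4^{K}(2n-1)^{-\Can(Y)}$ once $\ell$ is large.

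To finish, I would assemble the union bound. Restricting to the configurations relevant to the event, so that $\Can(Y)>(d+\epsilon)|Y|\ell\ge(d+\epsilon)m'\ell$, and using $|R|^{m'}=(2n-1)^{dm'\ell}$, the contribution of each combinatorial type is at most
\[
m^{m}(2\ell)^{m}\,|R|^{m'}\cdot 4^{K}(2n-1)^{-\Can(Y)}\;\le\;m^{m}(2\ell)^{m}4^{K}(2n-1)^{\,dm'\ell-\Can(Y)}\;\le\;\mathrm{poly}(\ell)\cdot(2n-1)^{-\epsilon\ell}.
\]
Summing over the at most $C(K,K')\,\ell^{O(K')}$ combinatorial types keeps the bound of the form $\mathrm{poly}(\ell)\cdot(2n-1)^{-\epsilon\ell}$, which tends to $0$ as $\ell\to\infty$, as required.

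The step I expect to be the main obstacle is the probability estimate for a single configuration: one must organize the count of reduced words so that the reducedness constraints, the cyclic closure, and the coincidences forced by repeated or self-glued relators cost only a constant factor (depending on $K$, $K'$, $n$) rather than a factor exponential in $\Can(Y)$. This is exactly the place where $(K,K')$-boundedness is used --- it caps the number of positions at which such ``boundary'' losses can occur --- and it is the combinatorial core of the proof of Proposition~\ref{IPIdisks}, carried out here for non-planar $Y$ as in \cite{MP, Odr}.
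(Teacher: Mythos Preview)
The paper does not give its own proof of this proposition; it is quoted as a background result from \cite{MP} Proposition~2.8 and \cite{Odr} Theorem~1.5. Your sketch is exactly the first-moment argument used in those references: count combinatorial types polynomially in $\ell$ thanks to $(K,K')$-boundedness, bound the realization probability of each type by a constant times $(2n-1)^{-\Can(Y)}$, then sum. The structure and the main estimates are correct.

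One step needs a fix. Your justification that $\Can(Y')\ge\Can(Y)$ because ``merging $2$-cells can only raise edge degrees'' is not correct as stated. If two cells $C_1,C_2$ in the same class share a boundary edge $e$ sitting at the \emph{same} relator-position in each (after accounting for basepoints and orientations), then after the merge the two incidences at $e$ collapse to one, the degree of $e$ drops by one, and $\Can$ strictly decreases. What actually rescues the inequality is the hypothesis that $Y$ \emph{fulfils} $R$: the requirement that the map $Y\to X/G$ be locally injective around edges forbids exactly this situation (two $2$-cell incidences at $e$ mapping to the same incidence in the presentation complex). Under local injectivity every merge identifies only edges that were distinct in $Y$, and one checks that $\Can(Y')=\Can(Y)$ on the nose; configurations violating local injectivity have realization probability zero and can be dropped from the union bound. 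With this correction your argument goes through and matches the proofs in \cite{MP, Odr}.
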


\section{Building Tiles} \label{sec:tiles}
\subsection{Potiles, Tiles and Tile Assignments} \label{subsec:potiles}

Ollivier-Wise defined a system of walls in the Cayley complex of a random group using only the antipodal relationship on edges of 2-cells \cite{OW}. In \cite{MP}, Mackay-Przytycki broadened their focus from 2-cells to small complexes of relatively large combinatorial curvature, which they called \emph{tiles}. We will use a similar definition, and show that these 2-complexes satisfy several nice properties.

\begin{defn} A \emph{potential tile}, or \emph{potile}, is a non-empty connected 2-complex $T$ equal to the closure of it 2-cells which satisfies the property 
	\[
	\Can(T) \geq \fourl (|T|-1).
	\]
	To specify the size of a potile, we say $T$ is an $n$-potile if it is composed of $n$ 2-cells.
\end{defn}

\begin{remark}
	This definition is a generalization of Definition 3.1 in \cite{MP}; Mackay-Przytycki define a \emph{tile} as an inductively built 2-complex which satisfies the strict version of the above inequality.
\end{remark}

Not every complex $Y$ is a potile. This is illustrated in Figure \ref{fig:exampletile}. For example, two 2-cells $T, S$ which have an overlap of $|T\cap S| < \fourl$ is not a potile, since 
$\Can(T\cup S) < \fourl$. However, if two potiles $T, T'$ which do not share 2-cells have $|T\cap T'|\geq \fourl$, then their union is a potile. Furthermore, if there is a third potile $S$ so that $T'\cup S$ is a potile, then $T\cup T' \cup S$ must be a potile as well. On the other hand, if $S, T'$ are potiles and $T'$ is not a 2-cell, then $S\cup T'$ being a potile does not necessarily imply that $|S\cap T'|\geq \fourl$. 

\begin{figure}
	\centering
	\includegraphics[width = .4\textwidth]{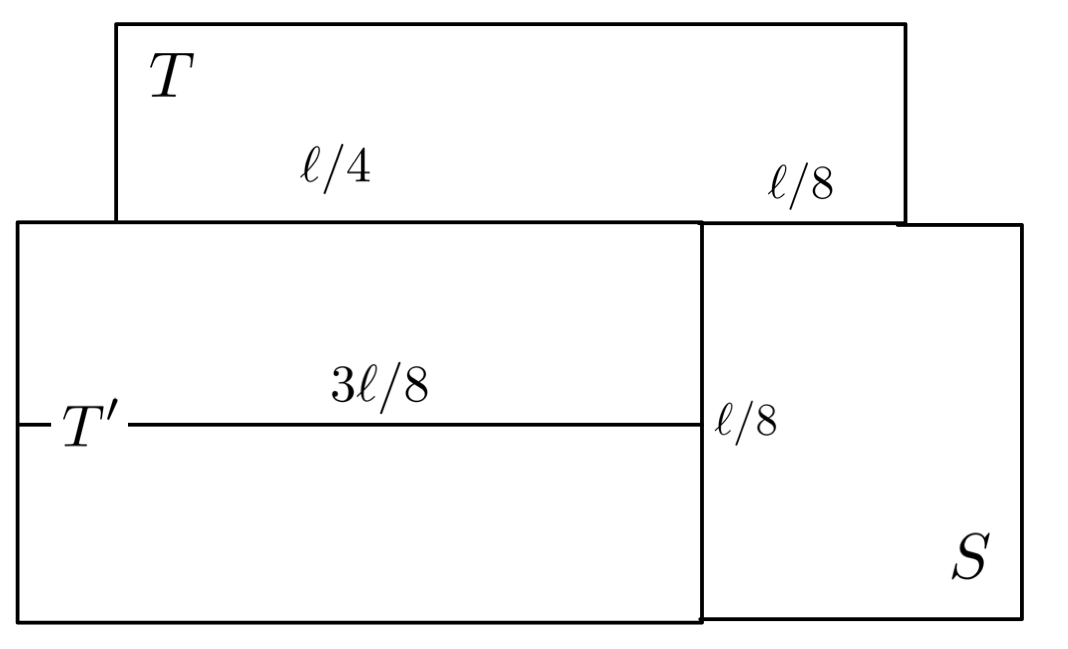}
	\caption{The 2-potile $T'$ is shown, along with an adjacent 1-potiles $S$ and $T$. The union $S\cup T'$ is a potile, even though $|S \cap T'|< \fourl$, but $S\cup T$ is not. Also, $|T\cap T'|\geq \fourl$, so $T\cup S\cup T'$ is also a potile.}
	\label{fig:exampletile}
\end{figure}

In this paper we will consider potiles which are either formed inductively, by glueing together potiles which share no 2-cells, or are a subset of such potiles. In this case, the size of a potile is controlled. Heuristically, this is because $X$ is globally negatively curved, and as the density $d$ increases the hyperbolicity constant $\delta$ increases. This allows pockets of local positive curvature to increase in size as well. This is made explicit in the following remark:

\begin{remark}\label{boundedsize}
	As shown in \cite{MP} Remark 3.3, potiles which are built inductively as unions of two potiles in random groups of density $d \leq \frac{N}{4(N+1)}$ have a maximal size of $N$.
\end{remark}

Another nice property of inductively built potiles is that they have controlled intersections.  

\begin{lemma}[See \cite{MP} Lemma 3.4, Remark 2.5]\label{lemma:trees}
	Let $T$ and $T'$ be potiles that are built inductively by glueing 2-cells, or are sub-potiles of such. If $T, T'$ share no 2-cells, then $T\cap T'$ is a connected tree and $|T\cap T'| \leq \frac{\ell}{2}$. 
\end{lemma}

In this paper, we will primarily consider specific potiles, built inductively.

\begin{defn}
	A \emph{tile collection} $\T$ is a set of potiles in $X$ which satisfy the following properties:
	\begin{enumerate}
		\item $\T$ is invariant under the action of $G$ on $X$,
		\item If $T, T' \in \T$ share 2-cells, then the closure of the union of those 2-cells is a union of tiles in $\T$, and 
		\item The union of the elements in $\T$ is all of $X$. 
	\end{enumerate}
	A potile contained in a tile collection is a \emph{tile}.  Given a tile $T \in \T$, $S$ is a \emph{subtile of $T$} if $S$ is a subcomplex of $T$ and $S \in \T$.
\end{defn}

\begin{ex} The set of single $2$-cells, $\T^0$, is a tile collection.

\end{ex}

In general, a tile collection may contain tiles that share 2-cells. This is not the case in $\T^0$. However, we will use an inductive process to build a more complicated tile collection in which this can happen. At each stage in the process, we will obtain a tile collection $\T^i$. Each $\T^i$ is partitioned into the disjoint union of three sets, denoted $\T^i_1, \T^i_c, \T^i_n$. The set $\T^i_1$ is given by $\T^0 \cap \T^i$. In other words, this is the set of tiles in $\T^i$ composed of a single 2-cell. The sets $\T^i_c$ and $\T^i_n$, called \emph{core tiles} and \emph{non-core tiles}, respectively, will be defined in the construction. We set $\T^0_c = \T^0_n = \emptyset$.

While $\T^i$ will be more complicated than $\T^0$, the tiles in $\T^i$ will have controlled overlaps, as described in the following Proposition.

\begin{prop}\label{prop:intersectionproperties}
	The tile collections $\T^i = \T^i_1 \sqcup \T^i_c \sqcup \T^i_n$ satisfy the following properties:
	\begin{enumerate}
		\item If $T, T' \in \T^i - \T^i_n$, then $T$ and $T'$ share no 2-cells, and $T, T'$ contain no proper sub-tiles.
		\item If $T, T' \in \T^i$ share 2-cells, then $\overline{T\cap T'}$  and $\overline{T-T'}$ are unions of potiles, all of which are tiles of some tile collection $\T^j$ for some $1\leq j<i$.
	\end{enumerate}
\end{prop}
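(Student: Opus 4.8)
The plan is to prove all three properties by induction on $i$, carried out simultaneously with the inductive definition of $\T^{i+1}$ from $\T^i$ given in the rest of this section. The general tile-collection axioms — $G$-invariance, covering $X$, and the requirement that the closure of a set of shared $2$-cells is a union of tiles — will be built into the construction, so only the three structural statements need argument. For the base case $i=0$ we have $\T^0 = \T^0_1$ with $\T^0_c = \T^0_n = \emptyset$: property (2) is vacuous; distinct $2$-cells of $X$ share no $2$-cells, and any sub-tile of a single $2$-cell $T$ must contain a $2$-cell of $T$ and hence all of $T$, so (1) holds; and if $T,T'\in\T^0$ share a $2$-cell then $T=T'$, so $\overline{T\cap T'}=T$ is a potile and $\overline{T-T'}=\emptyset$, giving (3).

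For the inductive step I would first record the features of the construction at stage $i+1$ that the argument uses: $\T^{i+1}$ is $\T^i$ together with finitely many $G$-orbits of new potiles, each of which is a union of tiles of $\T^i$ meeting along subpaths of length $\geq \fourl$; the new \emph{core} tiles are chosen minimally (with respect to inclusion of subcomplexes) among the newly available positively-curved potiles in their $G$-orbit, and each new \emph{non-core} tile is assembled from one such core tile by further amalgamation with tiles of $\T^i$. Granting this, (2) is immediate from the way non-core tiles are assembled. For (1), a tile of $\T^{i+1}-\T^{i+1}_n$ is either in $\T^i-\T^i_n$, where the inductive hypothesis applies, or a new core tile; if two new core tiles shared a $2$-cell, the closure of the shared $2$-cells would be a strictly smaller positively-curved potile available at stage $i+1$, contradicting minimality of the core-tile choice (or forcing the two to have been amalgamated into a single core tile). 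The ``no proper sub-tile'' clause is similar: a proper sub-tile of a core tile would exhibit the same local positive curvature at an earlier or equal stage, contradicting the minimal choice; for single $2$-cells it is the base-case observation above.

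The main obstacle is (3): when $T,T'\in\T^{i+1}$ share $2$-cells, $\overline{T\cap T'}$ and $\overline{T-T'}$ are unions of potiles that are themselves tiles (of some $\T^j$ with $j\leq i+1$, the index $j$ being produced by the induction). I would argue by cases on how $T$ and $T'$ arise at stage $i+1$. If both lie in $\T^i$, the inductive hypothesis applies directly. If exactly one is new, say $T=\bigcup_k S_k$ with the $S_k\in\T^i$ and $T'\in\T^i$, then $\overline{T\cap T'}$ and $\overline{T-T'}$ are obtained by gluing the complexes $\overline{S_k\cap T'}$ and $\overline{S_k-T'}$, each of which the inductive hypothesis already decomposes into potiles; the real work is to check that re-gluing along shared boundary preserves the defining inequality $\Can(\,\cdot\,)\geq\fourl(|\cdot|-1)$ on each connected component. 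This is where Lemma \ref{lemma:trees} and Proposition \ref{IPI} do the work: the tree structure of potile overlaps (length $\leq \ell/2$) constrains how the pieces can be attached, the additivity/superadditivity of $\Can$ under gluing recorded after its definition bounds $\Can$ of a component from below by the contributions of its pieces together with their overlaps, and Proposition \ref{IPI} rules out the configurations with too much cancellation relative to size that would otherwise violate the potile inequality. The fiddliest subcase is when both $T$ and $T'$ are new at stage $i+1$: here I would reduce to the previous subcase by first replacing $T,T'$ with core tiles $T_c\subseteq T$ and $T_c'\subseteq T'$ guaranteed by (2), then peeling off the amalgamated $\T^i$-pieces one at a time, at each step invoking the overlap lower bound $|{\cdot}\cap{\cdot}|\geq\fourl$ and the behavior of $\Can$ under gluing to certify that each intermediate complex — and hence each connected component of $\overline{T\cap T'}$ and $\overline{T-T'}$ — is still a potile.
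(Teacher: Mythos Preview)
Your proposal is far more elaborate than the paper's actual proof, largely because you are speculating about a construction you have not yet seen. In the paper, Construction \ref{tileconstruction} is designed precisely so that properties (1)--(3) hold essentially by fiat, and the proof given after the construction occupies four lines: part (1) follows because in Step 1 the constituent tiles $T,T'$ are \emph{removed} from $\T$ when $T\cup T'$ is added (so core tiles can have no proper sub-tiles, and two distinct core tiles could only share $2$-cells via the $G$-action, which Remark \ref{rmk:gorbits} rules out); parts (2) and (3) are declared ``immediate following the construction,'' because in Steps 2 and 3 the constituent tiles are \emph{retained} (moved to $\T_n$), so overlaps and complements decompose tautologically into tiles already present.

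Two concrete mismatches are worth flagging. First, your description of core tiles as ``chosen minimally (with respect to inclusion)'' is the opposite of what happens: Step 1 maximizes $|T\cup T'|$ and then $|T\cap T'|$, and the no-sub-tile clause is enforced not by minimality but by deletion of the constituents. Second, your plan for (3) --- invoking Lemma \ref{lemma:trees}, Proposition \ref{IPI}, and superadditivity of $\Can$ to certify the potile inequality for components of $\overline{T\cap T'}$ and $\overline{T-T'}$ --- is aimed at the wrong target: the claim is not that these complexes happen to satisfy $\Can(\cdot)\geq\tfrac{\ell}{4}(|\cdot|-1)$, but that they are literally unions of tiles already in some $\T^j$, which is a bookkeeping fact about the construction rather than an estimate. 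Your inductive scaffolding is not wrong in spirit, but once the construction is in hand none of the analytic machinery is needed.
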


We will prove this proposition immediately after describing the construction of $\T^i$. 

\subsection{The Tile Construction}

The construction of the tile-assignment is described here in words, explained in a flow chart in Figure \ref{tileconstructionflowchart}, and illustrated in an example in Figure \ref{fig:updatedlifeofatile}. The rough idea is to build tiles iteratively by glueing tiles together when they share no 2-cells and their union is a potile. When the size of the intersection of our two tiles is at least $\fourl$, we do so in such a way that we maximize first the size of the resulting potile, and then the size of the intersection. When that process stops, we still may find a pair of tiles who share no 2-cells and have union a potile, but the size of their intersection is less than $\fourl$. In this case, we add the union of these two tiles to our tile collection, but do not get rid of the constituent tiles. We then go back to looking for large intersections.

\tikzstyle{decision} = [draw, rectangle, node distance = 2cm, text centered, text width = 5cm, minimum height = 2em]
\tikzstyle{conditions} = [rectangle, draw, text width = 5cm, text centered, rounded corners]
\tikzstyle{line} = [draw, -latex']
\tikzstyle{noarrow} = [draw]
\tikzstyle{rule} = [rectangle, draw, text width = 5cm, text centered, rounded corners, minimum height = 4em]
\tikzstyle{empty} = [draw = none, fill = none]

\begin{figure}
	\centering
	\resizebox{.9\linewidth}{!}{
		\begin{tikzpicture}[node distance = 2cm, auto]
		
		\node [rule] (init) {\textsc{Starting State:} \\ $\T_1^0 = \{2\mbox{-cells in } X\}$, $\T^0_c = \T^0_n = \emptyset$.} ;
		\node [decision, below of=init, fill = yellow!20] (big intersections) {\textsc{Core Tiles:} \\ Do there exist tiles $T, T'$ such that $|T \cap T'|>\fourl$?};
		\node [conditions, below of = big intersections, fill = yellow!20] (max size) {Let $|T\cup T'|$ be maximal.};
		\node [conditions, below of= max size, yshift = 1cm, fill = yellow!20] (max intersection) {Let $|T\cap T'|$ be maximal.};
		\node [rule, below of= max intersection, fill = yellow!20] (bigintrule) {
			Remove all $G$-copies of $T, T'$ from $\T$ and add all $G$-copies of $T\cup T'$ to $\T_c$.
		};
		\node [empty, left of = bigintrule, xshift = -1cm] (left of bigintrule){};
		
		\node[decision, right of=big intersections, xshift = 4cm, fill = green!20] (small intersections) {\textsc{Small Intersections:} \\Do there exist tiles $S, S'$ such that $S\cup S'$ is a tile?};
		\node[conditions, fill = green!20, below of = small intersections] (maxcoreage) {Let $|S \cap S'|$ be maximal.};
		\node[rule, right of = bigintrule, xshift = 4cm, fill = green!20] (smallintrule) {
			Move all $G$-copies of $S, S'$ to $\T_n$ and add all $G$-copies of $S\cup S'$ to $\T_n$. 
		};
		\node [empty, right of = smallintrule, xshift = 1cm] (right of smallintrule) {};
		
		\node[decision, right of = small intersections, xshift = 4cm, fill = blue!20] (new intersections) {\textsc{Large Intersections:}\\ Do there exist $R, R' = S\cup S'$ such that $|R\cap R'|\geq \fourl$?};
		\node[conditions, below of = new intersections, fill = blue!20] (max size 2) {Let $|R \cup R'|$ be maximal.};
		\node[conditions, below of = max size 2, yshift = 1cm, fill = blue!20] (max can) {Let $\Can(R \cup R')$ be maximal.};
		
		\node[empty, above of = new intersections, yshift = -1cm](above of new intersections){};
		
		\node[rule, below of = max can, fill = blue!20] (newintrule) {
			Remove all $G$-copies of $R, R'$ from $\T$ and add all $G$-copies of $R\cup R'$ to $\T_n$. 
		};
		\node[empty, right of= newintrule, xshift = 1cm] (right of newintrule) {};
		
		\node[empty, above of = small intersections](above of small intersections){};
		\node[empty, above of = small intersections, xshift = 1cm, yshift = -1cm](abovesi) {};
		
		\node[rule, draw, above of = new intersections, fill = red!20, text centered, text width =2cm] (end) {\textsc{End.}};
		
		\path [line] (init)-- (big intersections);
		\path [line] (big intersections) -- node[anchor = east]{yes} (max size);
		\path [line] (max size) --   (max intersection) ;
		\path [line] (max intersection) -- (bigintrule);
		\draw [noarrow] (bigintrule) --  (left of bigintrule.center);
		\draw [line] (left of bigintrule.center) |- (big intersections);
		\path[line] (maxcoreage) -- (smallintrule);	
		\path [line] (big intersections) -- node[anchor = south]{no} (small intersections);
		\path[line] (small intersections) -- node[anchor = east]{yes} (maxcoreage);

		\path[noarrow] (smallintrule) -- (right of smallintrule.center);
		\path[line] (right of smallintrule.center) |-(new intersections);
		
		\path[noarrow] (small intersections) -- (above of small intersections.center);
		\path[line] (above of small intersections.center) -- node[anchor = south]{no} (end);
		
		\path[line] (new intersections) -- node[anchor = east]{yes} (max size 2);
		
		\path[line] (max size 2) --  (max can);
		\path[line] (max can) -- (newintrule);
		\path[noarrow] (newintrule) -- (right of newintrule.center);
		\path[line] (right of newintrule.center) |- (new intersections);

		\path[noarrow] (new intersections)  --  (above of new intersections.center);
		\path[noarrow] (above of new intersections.center) -- node[anchor = south]{no}(abovesi.center);
		\path[line] (abovesi.center)--(small intersections);

		\end{tikzpicture}
	}
	\
	\caption{How to construct tiles, as in Construction \ref{tileconstruction}.}
	\label{tileconstructionflowchart}
\end{figure}
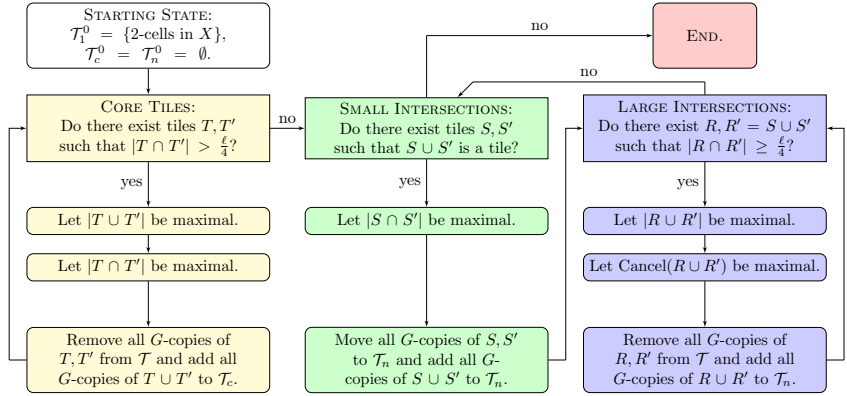

Since this process is done $G$-equivariantly, there are finitely many $G$-orbits of $2$-cells, and potiles in $X$ have a uniformly bounded size, this process eventually terminates.

\begin{figure}
	\centering
	\includegraphics[width = .8\textwidth]{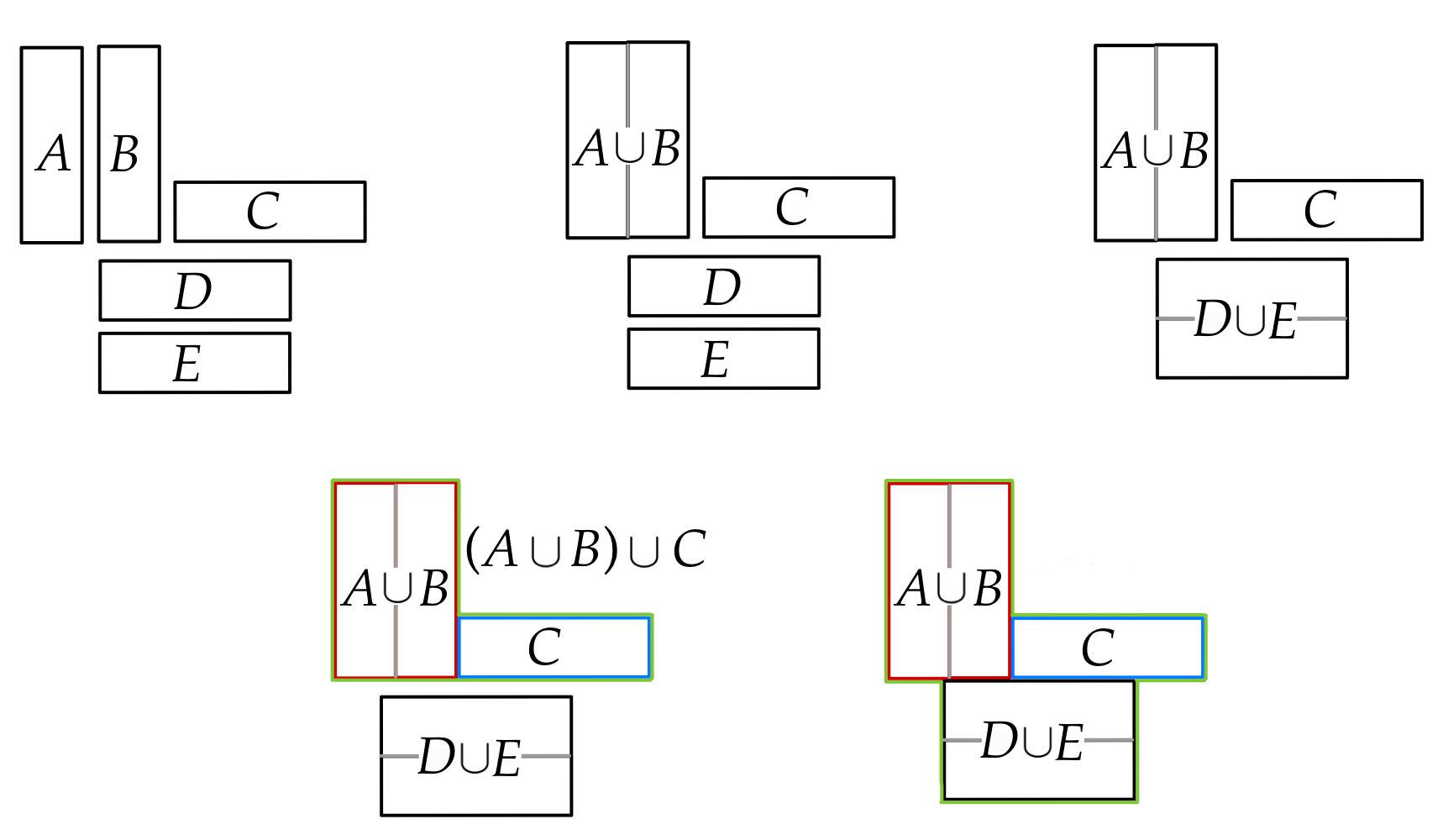}
	\caption{This image shows a possible `ancestry' of a 5-potile. Due to the maximality constraints, we know that $|A\cap B| \geq |D\cap E|.$ Additionally, $|C\cap (A\cup B)|<\fourl$. since $A\cup B$ and $C$ remain in the tile collection. On the other hand, $|((A\cup B)\cap C) \cap (D\cup E)| \geq \fourl$ since $A\cup B\cup C$ and $D\cup E$ are removed from the tile collection. In Step 3, the tile $A\cup B$ is older than $D\cup E$ and $C$ is younger than both of them.}
	\label{fig:updatedlifeofatile}
\end{figure}

\begin{remark}\label{rmk:gorbits}It is reasonable to worry that we may run into a situation in which two tiles in the same $G$-orbit are glued together. However, if $T, T'$ are tiles and $|T\cap T'| \geq \fourl$, this is impossible. Indeed, suppose $T' = gT$ for some $g \in G$. Consider the 2-complex $Y$ obtained by identifying the edges in $T\cap T'$ with their pre-image under the action of $g$. This complex is realized by $R$, but $\Can(Y) \geq \Can(T) + |T\cap T'| \geq \fourl|T|$, which violates Proposition \ref{IPI}.
	
It is possible that if two tiles $T, T'$ have overlap $|T \cap T'|<\fourl$ that they could lie in the same $G$-orbit, but this will not be a problem.
\end{remark}

The following construction of tiles is limited to tiles of size $\leq 5$. Note that a priori, at density $d<3/14$ tiles will have a maximal size of 6. We limit ourselves to tiles of size $\leq 5$ here so that the proof of Theorem \ref{thm:balancingwalls} (in particular, Cases 3(a) and 4(a)) will hold. In general, for any density $d<n/4(n+1)$ the following construction, without the limits on the sizes of the tiles, will produce a tile collection and terminate in a finite number of steps.

\begin{construction}[Tile Collections $\T^i$]\label{tileconstruction} The construction follows three steps, and is inspired by the process in \cite{MP}. The starting state is $\T^0 = \T_1 \sqcup \T_c \sqcup T_n$, where $\mathcal{T}_1$ is the set of $2$-cells, and $\T_c = \T_n = \emptyset$. 
	\begin{enumerate}[label=Step \arabic*:, leftmargin=.6in]
		\item (Core Intersections)  If there exists a pair of tiles $T, T'$ in $\T^i$ so that $|T \cap T'|>\fourl$ and $|T\cap T'| \leq 5$, choose such a pair so that $|T\cup T'|$ is maximal and $|T\cap T'|$ is maximal, in that order. Then 
		\[
		\T^{i+1} = \T^i - \{gT, gT' \;|\; g \in G\} \sqcup \{gT\cup gT' \;|\; g \in G\},	
		\]
		and assign each $gT\cup gT' \in \T^{i+1}_c$.
		Repeat Step 1 until there are no such pairs. 	
		
		Since tiles are uniformly bounded in size and there are finitely many $G$-orbits of $2$-cells, this process must eventually stop. At the end of this, we will have a set $\T_c$ which we will call the \emph{core tiles of} $\T$.

		\item (Small Intersections) If there exist tiles $S, S' \in \T^i$ which do not share 2-cells so that $ S\cup S'$ is a potile and $|S \cup S'|\leq 5$, choose such a pair which maximizes the size of $|S\cap S'|$. Notice that $|S\cap S'|\leq \fourl$. Then define 
		\[
			\T^{i+1} = \T^i \sqcup \{gS\cup gS' \;|\; g \in G\},	
		\]
		and assign each $gS, gS', gS\cup gS' \in \T^{i+1}_n$. 
		Move on to Step 3. Note that in this situation, we can not rule out the possibility that $S = gS'$ for some $g \in G$. 
		
		\item (Large Intersections) If there is a pair $R, R'$ so that $|R\cap R'|>\fourl$, $|R\cap R'|\leq 5$, and $R$ does not share 2-cells with $R'$, then one of these tiles, say $R'$, must contain $S\cup S'$ from Step 2. Choose such a pair which maximizes $|R\cup R'|$ and $\Can(R\cup R')$, in that order. Define 
		\[
			\T^{i+1} = \T^i - \{gR, gR' \;|\; g \in G\} \sqcup \{gR\cup gR' \;|\; g \in G\},	
		\]
		and assign each $gR\cup gR' \in \T^{i+1}_n$. 
		Repeat Step 3 until it terminates. Then return to Step 2.
	\end{enumerate}
	
\end{construction}

From now on, the tile assignment $\T = \T_1 \sqcup \T_c \sqcup T_n$ will refer to a terminal tile assignment constructed in this manner. Note that by construction, $\T_1, \T_c, \T_n$ are all pairwise disjoint.

\begin{defn}
	If $T$ and $T'$ are tiles that appear at some stage of the inductive process after the base-case, then $T'$ is \emph{younger} than $T$ if $T'$ appeared at a later step than $T$. By convention, we will always declare a tile from the starting tile collection to be the youngest tile.
\end{defn}

\begin{ex}
	In the $5$-tile construction in Figure \ref{fig:updatedlifeofatile}, at Step 3 the tile $A\cup B$ is older than  $D\cup E$, and $C$ is younger than both of these $2$-tiles.
\end{ex}

\begin{proof}[Proof of Proposition \ref{prop:intersectionproperties}]
	\begin{enumerate}
		\item Since tiles in $\T_c$ are made by glueing non-overlapping tiles, the only way this could occur is if $T = gT'$  for some $g \in G$. But by Remark \ref{rmk:gorbits}, this can not happen.
		
		\item This is immediate following the construction.
	\end{enumerate}
	
\end{proof}

\begin{remark}\label{rem:subtileage}
	If $T, T' \in \T_c$, $|T|, |T'| \leq 3$, and $T'$ is younger than $T$, then $T$ must have been completed before the first 2-tile in $T$ was formed. In particular, if $D, D'$ are the first 2-cells glued together in $T, T'$, respectively, then 
	\[
	\Can(D) \geq \Can(D').
	\]
	Note that this need not be true when $|T| > 3$.
\end{remark}

\section{Building Walls} \label{sec:walls}
\subsection{Tile-Walls, Balance, and Some Inequalities} \label{subsec:tilewalls}

As in \cite{OW} and \cite{MP}, we will find an action of $G$ on a CAT(0) cube complex using the method of Sageev \cite{Sageev}. This requires that we find subspaces of $X$ which are quasi-convex, permuted by the action of $G$ on $X$, and subdivide $X$ into two essential components. We will build these inductively by first creating trees in each tile, called \emph{tile-walls}, and then glueing these tile-walls along the intersection of the tiles to form an embedded tree in $X$.  In this section, we will describe the construction of the tile-walls and establish some of their metric properties.

\begin{defn} A \emph{tile-wall} $\Gamma_T$ is an immersed, connected tree in a tile $T$ with vertices given by a subset of the edge mid-points in $T$, such that each 2-cell of $T$ contains at most two vertices of $\Gamma_T$ and $T-\Gamma_T$ has at least two connected components..  There is an edge $(v, w)$ connecting two vertices $v, w$ if and only if $v, w$ lie in a single $2$-cell.
	
A path $\gamma$ in $\Gamma_T$ is a \emph{wall path}. 
\end{defn}

\begin{figure}
	\centering
	\includegraphics[width = .4\textwidth]{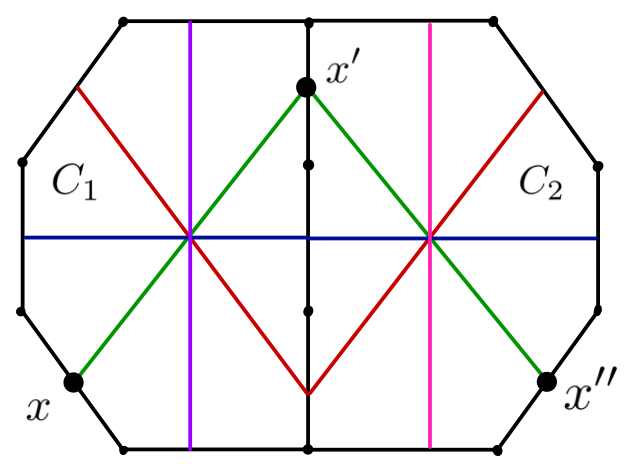}
	\caption{Each color denotes a different tile-wall. 
		The edge midpoints $x, x', x''$ all lie in the same tile-wall, given by the antipodal relationship. In this example, every tile wall is also a wall-path.}
	\label{fig:tilewallexample}
\end{figure}

\begin{ex}\label{ex:antipodalwalls} Given any $2$-cell $C$ with even boundary length, we can lay an edge from each vertex midpoint to its antipodal vertex midpoint. These are tile-walls in $C$. If we have two such decorated 2-cells, $C_1$ and $C_2$, and $|C_1 \cap C_2| \geq \fourl$, then their union is a potile, and the immersed graphs generated by concatening wall paths which share endpoints are also tile-walls, as illustrated in Figure \ref{fig:tilewallexample}.
	
However, if we consider a 3-potile as in Figure \ref{fig:MPexample}, we see that the tile-walls obtained by the antipodal relationship do not necessarily produce a tile-wall structure, since the right-most 2-cell contains 4 vertices of the purported tile-wall.
\end{ex}

\begin{figure}
	\centering
	\includegraphics[width=.4\textwidth]{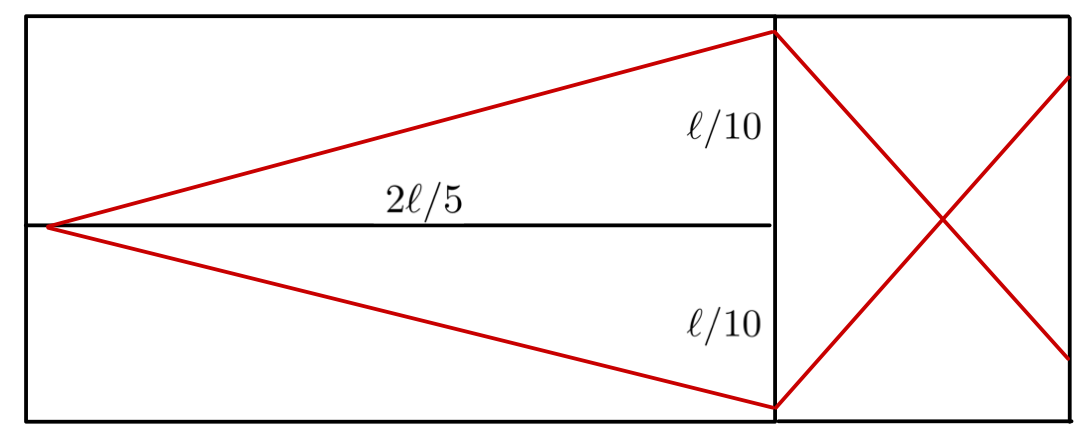}
	\caption{This 3-potile demonstrates that using the antipodal relationship does not always produce tile-walls. The marked graph, connecting antipodal edges, is sharply bent as it passes through the path of length $2\ell/5$.}
	\label{fig:MPexample}
\end{figure}

One way to understand why the antipodal relationship does not always give a tile-wall structure is to consider what happens when two tiles are glued along a long path. For a wall-path passing through an edge of this intersection near one of its endpoints, after glueing the wall is sharply `bent', meaning endpoints of the wall are close together. In this instance, a third tile might be glued so that it intersects both endpoints of the same wall-path. Thus the primary objective is to `unbend' walls just enough during the gluing process in Steps 1 and 3 of Construction \ref{tileconstruction} so that any two endpoints are `separated' -- in particular, so that no single tile can intersect two endpoints of the same tile-wall. 

We quantify this in the following definition:

\begin{defn} (\cite{MP}, Definition 4.2)
	The \emph{balance} of a tile $T$ is given by 
	\[
	\Bal(T) = \fourl(|T|+1) - \Can(T).
	\]
\end{defn}

In particular, we have the following result, illustrated in Figure \ref{fig:tileintersectionnonexample}.

\begin{lemma}[See \cite{MP} Lemma 4.4]\label{lem:balancemotivation}
	If $T, T'$ are potiles with no shared 2-cells, then 
	\[
	|T \cap T'| \leq \min\{\Bal(T), \Bal(T')\}.
	\]
\end{lemma}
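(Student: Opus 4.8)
The plan is to pass to the union $Y = T\cup T'$ and pit the isoperimetric inequality for non-planar complexes (Proposition~\ref{IPI}) against the potile condition on the \emph{other} tile.

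First I would record the additivity of cancellation. Since $T$ and $T'$ are subcomplexes of $X$ sharing no 2-cells, $Y$ is their union along the tree $T\cap T'$ (a tree by Lemma~\ref{lemma:trees}), so the equality case of the cancellation inequality displayed in Section~\ref{sec:background} gives
\[
\Can(Y) = \Can(T) + \Can(T') + |T\cap T'|,
\]
where $|T\cap T'|$ is the edge-length of $T\cap T'$: each of its edges gains exactly one unit of degree when $T$ and $T'$ are glued, and no other degree changes. This bookkeeping should be spelled out, since "$|\cdot|$" is being used here for an edge-length rather than a count of 2-cells.

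Next I would invoke boundedness and the isoperimetric inequality. By Remark~\ref{boundedsize} the potiles $T, T'$ have size bounded by a constant depending only on $d$, so $|Y| = |T|+|T'|$ is bounded and, by Remark~\ref{rem:maxtilesize}, $Y$ is $(K,K')$-bounded for suitable $K, K'$; moreover $Y$ is fulfilled by $R$, being a subcomplex of $X$. Fixing $\varepsilon>0$ with $d+\varepsilon<1/4$ (possible since $d<1/4$), Proposition~\ref{IPI} gives w.o.p. $\Can(Y)\le (d+\varepsilon)|Y|\ell < \fourl(|T|+|T'|)$. Combining this with the additivity identity and the potile bound $\Can(T')\ge\fourl(|T'|-1)$,
\[
\Can(T) + \fourl(|T'|-1) + |T\cap T'| \;\le\; \Can(Y) \;<\; \fourl(|T|+|T'|),
\]
and rearranging yields $|T\cap T'| < \fourl(|T|+1) - \Can(T) = \Bal(T)$. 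Exchanging the roles of $T$ and $T'$ gives $|T\cap T'| < \Bal(T')$, hence the lemma (in fact with strict inequality).

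I do not anticipate a real obstacle. The only points that need care are the two flagged above: reading "$|T\cap T'|$" as the edge-length of the intersection so that the additivity step is stated correctly, and confirming that $Y$ is genuinely $(K,K')$-bounded so that the \emph{non-planar} Proposition~\ref{IPI} applies (potiles need not be planar, so Proposition~\ref{IPIdisks} alone would not suffice). Both are immediate from the remarks already quoted.
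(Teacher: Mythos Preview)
Your proposal is correct and follows essentially the same argument as the paper: both pass to $Y=T\cup T'$, use the additivity $\Can(Y)=\Can(T)+\Can(T')+|T\cap T'|$, apply the potile lower bound on one factor, and invoke Proposition~\ref{IPI} to force $|T\cap T'|\le\Bal(T)$ (and symmetrically $\Bal(T')$). The only cosmetic difference is that the paper phrases it as a proof by contradiction while you argue directly; your additional remarks on $(K,K')$-boundedness and on reading $|T\cap T'|$ as edge-length are helpful clarifications but do not change the route.
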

\begin{proof} Suppose we have two such potiles $T, T'$, and $|T\cap T'| > \Bal(T)$. Then 
	\begin{align*}
	\Can(T\cup T') &=  \Can(T) + \Can(T') + |T\cap T'| \\
	&\geq \Can(T) + \left(\fourl |T'| - \fourl \right)+ \Bal(T)\\
	&=\fourl|T| + \fourl + \left(\fourl |T'| - \fourl \right) = \fourl(|T\cup T'|).
	\end{align*}
	Since $T, T', T\cup T'$ are built inductively, they have a uniformly bounded size (see Remark \ref{boundedsize}). Therefore this contradicts Proposition \ref{IPI}.
\end{proof}

If the endpoints of every wall-path in a tile $T$ are separated by at least $\Bal (T)$, then there is no potile $T'$ which contains two vertices of the same tile-wall in $T$. 

\begin{figure}
	\centering
	\includegraphics[width=.4\textwidth]{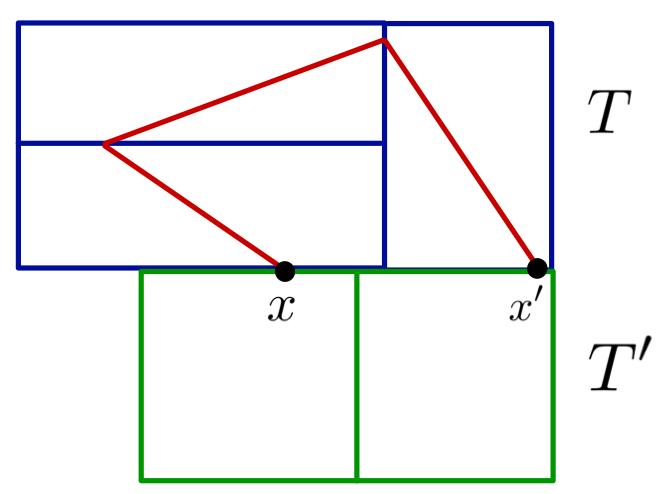} 
	\caption{If $T$, $T'$ are tiles and the red curve represents a tile-wall, then $d(x, x')$ is no larger than $\Bal(T)$ and $\Bal(T')$.}
	\label{fig:tileintersectionnonexample}
\end{figure}

\begin{remark}\label{rmk:balancebounds}
	If $T$ is a potile, then $\fourl \leq \Bal(T) \leq \frac{\ell}{2}$. However, if $T$ is not a potile, then $\Bal(T) > \frac{\ell}{2}$.  This follows directly from the definitions of potiles and balance.
\end{remark}

The ultimate goal is to construct walls which are embedded trees. To verify this, we will associate a tile to each wall path $\gamma$ in $T$. \cite{MP} do this with \emph{augmented tiles} (see Definition 4.11); we will take a slightly different approach. In particular, given a 2-cell $C$ and a wall-path $\mathcal{W}$ in $T$, the augmented tile $\mathcal{T}(C, \mathcal{W})$ is a specific tile containing both $C$ and $\mathcal{W}$. However, this definition depends on the 2-cell containing $\mathcal{W}$ and is only defined for tiles of size $\leq 4$. Instead we will use \emph{shards}, which assign a sub-tile to each wall-path in a given tile.

\begin{defn}
    Let $\T$ be a tile collection. For a wall-path $\gamma$ and a tile $T \in \T$ containing $\gamma$, a \emph{shard assignment of $\T$} is a choice of sub-tile of $T$ containing $\gamma$, denoted $\sh_T(\gamma)$. 
\end{defn}

Following the steps of Construction \ref{tileconstruction}, we will construct a shard assignment, denoted $\sh^i$, of each $\T^i$ inductively. The idea here is to choose a shard for each wall-path in which the wall-path is balanced.

\begin{construction}[Tile-Wall Shards]
In the initial tile collection $\T^0$ consisting of single 2-cells, for any wall-path $\gamma \subset T \in \T$, we assign $\sh^0_T(\gamma) = T$.

Given a shard assignment $\sh^i$ of $\T^i$, we define a shard assignment $\sh^{i+1}$ on $\T^{i+1}$ as follows:

\begin{enumerate}[label=Step \arabic*:, leftmargin=.6in]
\item (Core Intersections) We have two tiles $T, T' \in \T^i$ so that $T\cup T' \in \T^{i+1}$ and $T, T' \notin \T^{i+1}$. For a wall-path $\gamma \in T\cup T',$ assign $\sh_{T\cup T'}(\gamma) = T \cup T'$. For any other wall-path $\lambda$ in any other tile $Q$, let $\sh^{i+1}_Q(\lambda) = \sh^i_Q(\lambda).$

\item (Small Intersections) We have two tiles $S, S' \in \T^i$ so that $S, S', S\cup S' \in \T^{i+1}$. For any wall path $\gamma \in S$, assign $\sh^{i+1}_S(\gamma) = \sh^i_S(\gamma)$. If $\Bal(S)<\Bal(S\cup S')$ then assign $\sh^{i+1}_{S\cup S'}(\gamma) = S.$ Otherwise, let $\sh^{i+1}_{S\cup S'}(\gamma) = S\cup S'$. Assign shards analogously for $S'$. If $\gamma$ is  wall-path in $S\cup S'$ not contained in $S$ or $S'$, let $\sh^{i+1}_{S\cup S'} = S\cup S'$. For any other wall-path $\lambda$ in a tile $Q$, assign $\sh^{i+1}_Q(\lambda) = \sh^i_Q(\lambda).$

\item (Large Intersections) We have two tiles $R, R' \in \T^i$ so that $R\cup R' \in \T^{i+1}$ and $R, R' \notin \T^{i+1}$. If $\gamma$ is a wall-path in $R$ and $\sh_R^i(\gamma) \neq R$, then assign $\sh^{i+1}_{R\cup R'}(\gamma) = \sh^i_R(\gamma)$. Assign shards analogously for wall-paths in $R'$. For all other wall-paths $\gamma \in R\cup R'$, assign $\sh^{i+1}_{R\cup R'}(\gamma) = R\cup R'$. For any other wall-path $\lambda$ in a tile $Q$, assign $\sh^{i+1}_Q(\lambda) = \sh^i_Q(\lambda).$
\end{enumerate}

As Construction \ref{tileconstruction} terminates in a finite number of steps, so does this construction. From now on, the notation $\sh_T(\gamma)$ will refer to the final shard assignment of this construction.
\end{construction}

\begin{ex}
	Suppose that $T$ is a tile composed as a union of three 2-cells $A, B, C$ where $|A\cap B|\geq \fourl$ and $|(A\cup B)\cap C| < \fourl$ (see Figure \ref{fig:shards}). Note that the proper sub-tiles of $T$ are $C$ and $A\cup B$, but not $A$ or $B$ (since they were removed from $\T$).  If $\gamma$ is a wall-path contained in $A$, then $\sh_T(\gamma) = A\cup B$. For a wall-path $\alpha$ with edges in $B$ and $C$, $\sh_T(\alpha) = T$. A wall-path $\beta$ in $C$ has $\sh_T(\beta) = C$. 
\end{ex}

\begin{figure}
	\centering
	\includegraphics[width=.5\textwidth]{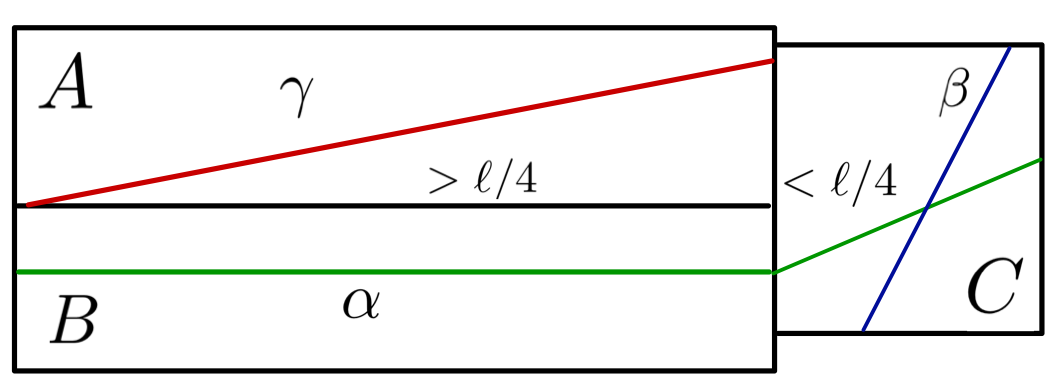}
	\caption{For tile $T = A \cup B\cup C$ with intersection sizes as shown, the shards of the illustrated wall-paths are $\sh_T(\gamma) = A\cup B, \sh_T(\alpha) = T, \sh_T(\beta) = C$.}
	\label{fig:shards}
\end{figure}

\begin{defn}
	A wall-path $\gamma$ with endpoints $x, x'$ in a tile $T$ is \emph{balanced with respect to $T$} if
	\[
	|x, x'|_T \geq \Bal(T).\]
	
	A tile-wall $\Gamma$ in a tile $T$ is \emph{balanced in $T$} if every wall-path $\gamma \subset \Gamma$ is balanced with respect to $T$.
	
	Alternatively, given a tile $T$ and wall-path $\gamma \subset T$ with endpoints $x, x'$, $T$ is \emph{$\gamma$-balanced} if
	\[
	|x, x'|_T \geq \Bal(\sh_T(\gamma)).
	\]
	
	We say $T$ is \emph{balanced} if it is $\gamma$-balanced for every $\gamma \in T$.
\end{defn}

\begin{ex}
	For a 1-tile $T$, the only balanced tile-wall is the graph which connects antipodal edge midpoints. (See Figure \ref{fig:tilewallexample}.) Indeed, for any antipodal edge midpoints $x, x'$, we have $|x, x'|_T = \frac{\ell}{2} = \Bal(T)$. 
\end{ex}

In general, the tile-wall generated by identifying antipodal edge midpoints will not give a balanced tile-wall structure. As we saw in Example \ref{ex:antipodalwalls}, it may not even give a tile-wall structure at all. However, by making a small adjustment to the tile-walls when we glue two tiles together, we can obtain a balanced tile-wall structure. The geometric idea behind this is that if that two tiles have a long overlap, the result of glueing them together is a sharply bent tile-wall, with vertices that are close together. To resolve this, we slightly `unbend' the tile-walls near the ends of these large intersections.

The following example, taken from \cite{MP}, also gives the flavor of how we will prove that our tile-walls are balanced. In particular, we will consider the possible ways that a wall-path $\gamma$ can lie in a tile $T\cup T'$, and show that in each of these situations, $\sh_{T\cup T'}(\gamma)$ is $\gamma$-balanced. This example is illustrated in Figure \ref{fig:balancing2tile}.

\begin{ex}[Bending Tile Walls]\label{ex:balancing2tile}
	Suppose $T, T'$ are 1-tiles with intersection $\frac{\ell}{2} \geq |T\cap T'| \geq \fourl$, each decorated with antipodal tile-walls. By Lemma \ref{lemma:trees}, $T\cap T'$ is a geodesic path. Let the endpoints be called $u_+$ and $u_-$, and label the (unique) points in $|T \cap T'|$ at distance $\fourl$ from $u_\pm$ with $v_\mp$. Let $\alpha_\pm$ denote the path from $u_\pm$ to $v_\pm$, and let $s_\pm$ be the symmetry of $\alpha_\pm$ which swaps its endpoints. For any tile-wall $\Gamma$ in $T'$ connecting $x$ to $x'$, if $x \in \alpha_\pm$ we replace that edge with one connecting $x'$ to $s_\pm(x)$. Otherwise, we leave the tile-walls as they are. Then the tile-wall structure on $T\cup T'$ generated by the tile-wall structure on $T$ and the adjusted tile-wall structure on $T'$ is balanced. We verify this claim below.
	
	Consider a wall-path $\gamma$ in $T\cup T'$ with endpoints $x, x'$. Then $\sh_{T\cup T'}(\gamma) = T\cup T'$. There are several cases. 
	
	\textsc{Case} 1: If $\gamma$ lies entirely in $T$ or $T'$ and neither $x$ nor $x'$ lies in $T\cap T'$, then $|x, x'| = \frac{\ell}{2}\geq \Bal(T\cup T')$, by Remark \ref{rmk:balancebounds}.
	
	\textsc{Case} 2: Suppose $x \in T\cap T'$. Note that at most one of $x, x'$ lies in $T\cap T'$ (by Lemma \ref{lem:balancemotivation}), so $x' \in T'$. If $x$ does not lie in $\alpha_\pm$, then $|x, x'| \geq \Bal(T\cup T')$ by the same argument as in Case 1. Otherwise, $|x, x'| = |s(x), x'| - |s(x), x| \geq \frac{\ell}{2} - (|T\cap T'|- \fourl) = \Bal(T\cup T')$. 
	
	\textsc{Case} 3: If $\gamma$ traverses both $T$ and $T'$, then $\gamma$ has a midpoint $y$ in $T\cap T'$. Let $z, z'$ be the nearest point projections in $(T\cup T')^{(1)}$ of $x, x'$, respectively, to $T\cap T'$. Then $|x, x'| \geq |x, z| + |x', z'|$. 
	
	\textsc{Case} 3(a): If $y \notin \alpha_\pm$, then $y$ is antipodal to both $x$ and $x'$, and $y$ is within $\fourl$ of $z$ and $z'$, so $|x, x'| \geq |x, y| -|y, z| + |x', y| - |y, z'|\geq \ell - \fourl - \fourl = \frac{\ell}{2}\geq \Bal(T\cup T').$ 
	
	\textsc{Case} 3(b): If, on the other hand, $y$ is in $\alpha_\pm$, then $y = s(y')$ for some point $y'$ in $T\cap T'$ which is antipodal to $x'$, and $|x, x'| \geq |x, y| -|y, z| + |x', y'| - |y', z'| \geq \ell - (2|T\cap T'| - (|T\cap T'| -\fourl)) = \frac{3\ell}{4} - |T\cap T'| = \Bal(T).$ 
\end{ex}

\begin{figure}
	\centering
	\includegraphics[width=.4\textwidth]{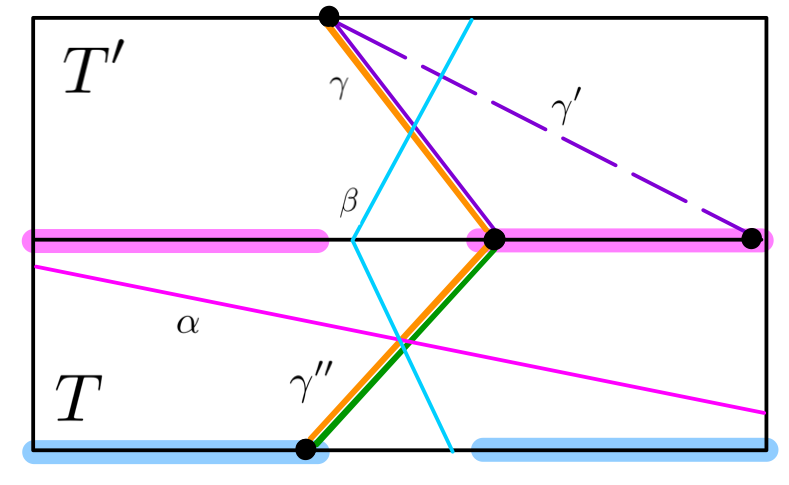}
	\caption{The 2-cell $T$ is on the top, and $T'$ on the bottom. The overlap of these two tiles is larger than $\fourl$. The paths $\alpha_\pm$ are indicated in red. The paths $\alpha, \beta. \gamma, \gamma''$ represent wall paths in $T\cup T'$. The path $\gamma'$ is the wall in $T'$ which is adjusted to give the wall $\gamma$ in $T\cup T'$.  (Refer to Example \ref{ex:balancing2tile}.)}
	\label{fig:balancing2tile}
\end{figure}

This is the motivating example for the method of balancing our tile-walls, as presented in the following section. While in general the intersection of two tiles is not a path but an embedded tree, we will see that when the tile-walls in $T \cup T'$ generated by glueing tile-walls in $T$ and $T'$ along their intersections are not balanced, then the tree $T\cap T'$ is `almost' a path, and a similar construction will result in balanced tile-walls.

The rest of this section is devoted to statements and proofs of lemmas which will be useful in the following section. Where referenced, these are `translations' of lemmas from \cite{MP} into the language of this paper. Proofs are included of all lemmas for completeness.

\begin{lemma}[See \cite{MP} Lemma 4.4]\label{MP4.4}
	Let $T = \sh_S(\gamma), T'$ be shards in $X$ that do not share $2$-cells, and suppose that $S$ is $\gamma$-balanced. Then at most one of the endpoints of $\gamma$ lies in $T'$.
\end{lemma}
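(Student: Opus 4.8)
The plan is to argue by contradiction using the Isoperimetric Inequality (Proposition~\ref{IPI}) applied to the union $T \cup T'$, in direct analogy with the proof of Lemma~\ref{lem:balancemotivation}. Suppose both endpoints $x, x'$ of $\gamma$ lie in $T'$. Since $T = \sh_S(\gamma)$ contains the wall-path $\gamma$, both $x$ and $x'$ also lie in $T$; hence $x, x' \in T \cap T'$. Because $T$ and $T'$ share no $2$-cells, Lemma~\ref{lemma:trees} tells us $T \cap T'$ is a tree of size $\leq \ell/2$, so in particular $x$ and $x'$ are joined by a path inside $T \cap T'$, and thus $|x,x'|_T \leq |x,x'|_{T \cap T'} \leq \ell/2$.

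The key step is the curvature estimate. Using the cancellation subadditivity remark (with $i \leq 2$, so equality), $\Can(T \cup T') = \Can(T) + \Can(T') + |T \cap T'|$. Now I want to bound $|T \cap T'|$ from below using the hypothesis that $S$ is $\gamma$-balanced: since $\gamma \subset T = \sh_S(\gamma)$ has endpoints $x,x'$, being $\gamma$-balanced means $|x,x'|_S \geq \Bal(\sh_S(\gamma)) = \Bal(T)$. I then need the (intuitively clear, but to-be-checked) fact that the distance $|x,x'|$ measured in $T \cup T'$, or in $T$, is at least the distance measured in $S$ — or more precisely that the relevant lower bound transfers — so that $|x,x'|_{T\cap T'} \geq \Bal(T) = \fourl(|T|+1) - \Can(T)$. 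Combining, $|T \cap T'| \geq \Bal(T)$, and then the computation proceeds exactly as in Lemma~\ref{lem:balancemotivation}:
\[
\Can(T \cup T') \geq \Can(T) + \left(\fourl|T'| - \fourl\right) + \Bal(T) = \fourl|T| + \fourl + \fourl|T'| - \fourl = \fourl\,|T \cup T'|,
\]
where $\Can(T') \geq \fourl(|T'|-1)$ since $T'$ is a potile. But since $T$ and $T'$ share no $2$-cells, $|T \cup T'| = |T| + |T'|$, and this inequality $\Can(T\cup T') \geq \fourl|T\cup T'|$ together with the strict form available (or an $\varepsilon$ room, since we may assume the density $d < 1/4$ strictly so $\fourl > (d+\varepsilon)\ell/4 \cdot \ldots$) contradicts Proposition~\ref{IPI}. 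Concretely: $\fourl |T\cup T'| = \frac{\ell}{4}|T\cup T'| > (d+\varepsilon)|T\cup T'|\ell$ for suitable $\varepsilon$ when $d < 1/4$, so $Y = T \cup T'$ violates the Isoperimetric Inequality.

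The main obstacle I anticipate is the bookkeeping around which metric the balance condition controls: $\Bal(\sh_S(\gamma))$ bounds $|x,x'|_S$, but what I need for the contradiction is a lower bound on $|x,x'|_{T\cap T'}$ (a path living inside $T\cap T' \subseteq T$). I expect this follows because $\sh_S(\gamma) = T$ is a subtile containing $\gamma$, so $\gamma$ and its endpoints already live in $T$, and any path in $T\cap T'$ between $x$ and $x'$ is in particular a path in $T$ and in $S$; hence $|x,x'|_{T\cap T'} \geq |x,x'|_S \geq \Bal(T)$ — one must just make sure the subtile inclusion behaves well with the metrics, which is where Corollary~\ref{cor:geodesics} (embedded short paths are geodesic) does the work. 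A secondary subtlety is confirming that $|T\cap T'|$ genuinely equals the length of the geodesic between $x$ and $x'$ only if $x, x'$ are the extreme points of the tree $T\cap T'$; in general $|T\cap T'| \geq |x,x'|_{T\cap T'}$, and that is the correct direction for the inequality chain above, so no harm done.
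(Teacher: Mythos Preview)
Your proof is correct and is precisely the argument the paper has in mind: the paper's proof consists of the single sentence ``This is a restatement of Lemma~\ref{lem:balancemotivation} in terms of balanced tile-walls,'' and you have unpacked exactly that restatement. Your anticipated obstacle about the metrics is resolved correctly---since $T\cap T' \subset T \subset S$, the chain $|T\cap T'| \geq |x,x'|_{T\cap T'} \geq |x,x'|_S \geq \Bal(T)$ goes through---and your handling of the strict-versus-nonstrict inequality via the $\varepsilon$-room from $d<1/4$ is also right.
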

\begin{proof} This is a restatement of Lemma \ref{lem:balancemotivation} in terms of balanced tile-walls.
\end{proof}

\begin{lemma}[See \cite{MP} Lemma 4.5]\label{MP4.5}
	Let $T = \sh_S(\gamma), T'$ be shards in $X$ that do not share $2$-cells. Suppose that $S$ is $\gamma$-balanced. Then the endpoints $x, x'$ of $\gamma$ satisfy
	\[
	|x, x'|_{T\cup T'} \geq \Bal(T\cup T') + |T\cap T'| -\fourl,
	\]
	and if $|T\cap T'| \geq \fourl$, then $|x, x'| \geq \Bal(T\cup T')$.
\end{lemma}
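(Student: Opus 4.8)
The plan is to bound $|x,x'|_{T\cup T'}$ from below using the hypothesis that $S$ is $\gamma$-balanced (so $|x,x'|_T \geq \Bal(\sh_T(\gamma)) = \Bal(T)$, since $T = \sh_S(\gamma)$) and then to convert that into a bound in the larger complex $T\cup T'$. The key point is to understand how distances can shrink when we pass from $T$ to $T\cup T'$: the only new identifications come from gluing along the tree $T\cap T'$, which by Lemma \ref{lemma:trees} has size $\leq \ell/2$. So first I would establish the elementary geometric fact that for any two points $p, q$ in $T$, $|p,q|_{T\cup T'} \geq |p,q|_T - (\text{something controlled by }|T\cap T'|)$; more precisely, any geodesic in $T\cup T'$ between points of $T$ that leaves $T$ must enter and exit through $T\cap T'$, and a shortcut through $T'$ can be replaced by a path through $T\cap T'$ at the cost of at most $\text{diam}(T\cap T') \leq |T\cap T'|$ — but since $T\cap T'$ is a geodesic tree of size $\leq \ell/2$ and the relevant paths have length $\leq \ell/2$, Corollary \ref{cor:geodesics} lets us control this.

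Next I would run the argument more carefully. Since $S$ is $\gamma$-balanced and $T = \sh_S(\gamma)$, we have $|x,x'|_T \geq \Bal(T)$. Now take a geodesic $\sigma$ in $(T\cup T')^{(1)}$ from $x$ to $x'$. If $\sigma$ stays in $T$ we are already done with room to spare (since $\Bal(T) = \Bal(T\cup T') + |T\cap T'|$ when $|T\cap T'| = \fourl$, and more generally $\Bal(T\cup T')$ can only be smaller — I should compare $\Bal(T\cup T')$ and $\Bal(T)$ directly: $\Bal(T\cup T') = \fourl(|T|+|T'|+1) - \Can(T) - \Can(T') - |T\cap T'|$, and $\Can(T') \geq \fourl(|T'|-1)$, so $\Bal(T\cup T') \leq \fourl(|T|+2) - \Can(T) - |T\cap T'| = \Bal(T) + \fourl - |T\cap T'|$, i.e. $\Bal(T) \geq \Bal(T\cup T') + |T\cap T'| - \fourl$, which is exactly the target inequality). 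If instead $\sigma$ dips into $T'$, then where it leaves and re-enters $T$ it passes through points of $T\cap T'$; I replace each excursion into $T'$ by the corresponding arc inside $T\cap T'$, which has length at most $\diam(T\cap T')$. The issue is that this could in principle cost more than what we gain, so I need to argue that the replacement arc is no longer than the excursion it replaces — this follows because $T\cap T'$ consists of geodesic edges of $X$ (Corollary \ref{cor:geodesics}, as $|T\cap T'| \leq \ell/2$), so a path between two points of $T\cap T'$ within $X$ is at least as long as the $T\cap T'$-arc between them. Hence $|x,x'|_{T\cup T'} = \length(\sigma) \geq |x,x'|_T \geq \Bal(T) \geq \Bal(T\cup T') + |T\cap T'| - \fourl$. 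The second claim is then immediate: if $|T\cap T'| \geq \fourl$ the correction term $|T\cap T'| - \fourl$ is nonnegative, so $|x,x'| \geq \Bal(T\cup T')$.

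The main obstacle I anticipate is the shortcut-replacement step: making rigorous that an excursion of a geodesic of $T\cup T'$ into $T'$ can be traded for an arc inside $T\cap T'$ of no greater length. This requires knowing that $T\cap T'$, as a subcomplex of $X^{(1)}$, is convex enough — specifically, that the $T\cap T'$-distance between two of its points equals (or is dominated by) their $X^{(1)}$-distance — which is where Lemma \ref{lemma:trees} (it is a tree of size $\leq \ell/2$) and Corollary \ref{cor:geodesics} (embedded paths of length $\leq \ell/2$ are geodesic) do the real work. In $T\cup T'$ itself the excursion together with the $T\cap T'$-arc forms an embedded closed path; one must check its length is $< \ell$ to invoke Corollary \ref{cor:geodesics}, which holds because both pieces have length at most $\ell/2$ and are not simultaneously extremal. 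Modulo that verification, everything else is the bookkeeping with $\Bal$ recorded above.
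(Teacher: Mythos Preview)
Your approach is correct and essentially the same as the paper's: both reduce to the identity $|x,x'|_{T\cup T'} = |x,x'|_T \geq \Bal(T)$ (via geodesicity of $T\cap T'$, since $|T\cap T'|\leq \ell/2$) combined with the computation $\Bal(T\cup T') \leq \Bal(T) + \fourl - |T\cap T'|$ obtained from the potile inequality on $T'$. Your closing concern about embedded closed loops is unnecessary --- simply apply the second half of Corollary~\ref{cor:geodesics} to the $T\cap T'$-arc directly (it has length $\leq \ell/2$, hence is geodesic in $X^{(1)}$), which is exactly what the paper does in a single line.
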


Refer to Figure \ref{fig:MP4.5}.

\begin{figure}
	\centering
	\includegraphics[width=.4\textwidth]{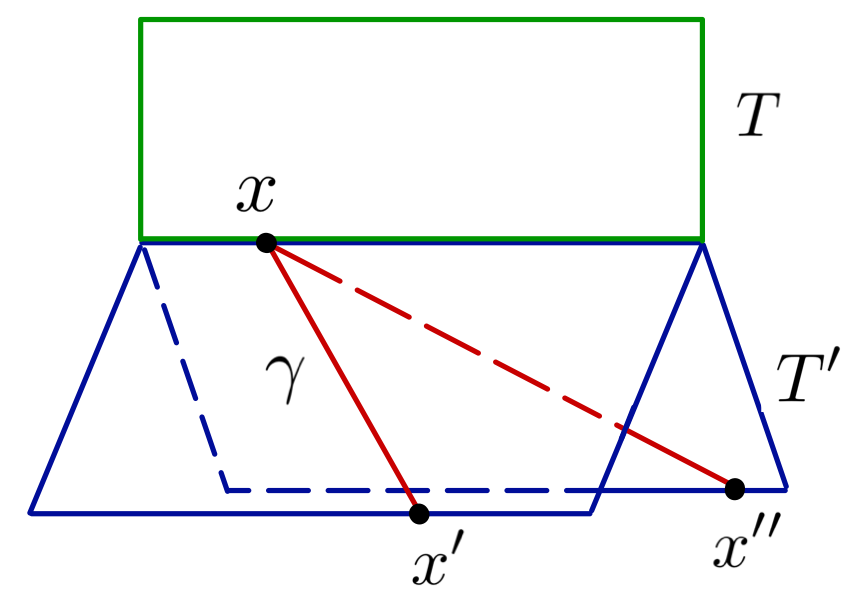}
	\caption{By Lemma \ref{MP4.5}, if the tile T' is $\gamma$-balanced and $|T\cap T'|\geq \fourl$, then $|x, x'| \geq \Bal(T\cup T') + |T \cap T'| - \fourl$. Similarly for $|x, x''|$ and $|x', x''|$. }
	\label{fig:MP4.5}
\end{figure}

\begin{proof} By Lemma \ref{lemma:trees}, $|T\cap T'| \leq \frac{\ell}{2}$. This is geodesic by Corollary \ref{cor:geodesics}, so 
	\[
	|x, x'|_{T\cup T'} = |x, x'|_T > \Bal(T).
	\]
	We also know that 
	\begin{align*}
	\Bal(T\cup T') &= \fourl|T\cup T'| + \fourl - \Can(T\cup T') \\
	&= \fourl(|T| + 1) + \fourl|T'| - \Can(T) - \Can(T') - |T\cap T'| \\
	&\leq \Bal(T) - |T\cap T'| + \fourl.
	\end{align*}
	If $|T\cap T'| > \fourl$, then $\Bal(T\cup T') \leq \Bal(T) \leq |x, x'|_{T\cup T'}.$
\end{proof}

\begin{lemma}[See \cite{MP} Lemma 4.6]\label{MP4.6} 
	Let $T = \sh_S(\gamma)$ and $T' = \sh_{S'}(\gamma')$ be shards of $S, S'$ which are $\gamma, \gamma'$-balanced, respectively. Suppose $T, T'$ do not share 2-cells, and let the endpoints of $\gamma$ be called $x, y$ and the endpoints of $\gamma'$ be called $x', y'$. Suppose $|T \cap T'| > \fourl$ and there is a path $\alpha \subset T\cap T'$ such that $T\cap T' \subset N_{\ell/4}(\alpha)$. Let $s_\alpha$ be the symmetry of $\alpha$ which swaps its endpoints.  If $s_\alpha(y') = y$, then 
	\[
	|x, x'|_{T\cup T'} \geq \Bal(T\cup T').
	\]
\end{lemma}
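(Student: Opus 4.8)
The plan is to run the same triangle-inequality bookkeeping used in Example~\ref{ex:balancing2tile} and in Lemma~\ref{MP4.5}, but now tracking the endpoint $x$ of $\gamma'$ as it travels from $T'$, through the intersection $T\cap T'$, over to the endpoint $y = s_\alpha(x)$ of $\gamma$. The key structural input is that $T\cap T'$ sits inside the $\tfrac{\ell}{4}$-neighborhood of the path $\alpha$, which lets us replace ``distance to $T\cap T'$'' by ``distance to $\alpha$'' at the cost of at most $\fourl$, and to use the fact that on $\alpha$ the symmetry $s_\alpha$ is an isometry swapping endpoints, so $|p, s_\alpha(p)|_\alpha = |\alpha| - 2\,|p, \partial_-\alpha|$ for the appropriate endpoint.

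First I would set up the geometry: let $z, z''$ be the nearest-point projections of $x, x''$ (the free endpoints of $\gamma', \gamma$) onto $T\cap T'$ in $(T\cup T')^{(1)}$, which are geodesic projections since $|T\cap T'|\le \ell/2$ by Lemma~\ref{lemma:trees} and Corollary~\ref{cor:geodesics}. Then $|x', x''|_{T\cup T'} \ge |x, z| + |z, z''| + |z'', x''|$ is the wrong endpoint — rather, since $y = s_\alpha(x)$ lies in $\alpha \subset T\cap T'$, any path from $x'$ to $x''$ crossing from $T'$ into $T$ must pass near $y$, so I would instead lower-bound $|x', x''|_{T\cup T'}$ by $|x', \pi(x')|_{T'} + |\pi(x'), y|_{T\cap T'} + |y, x''|_T$ after decomposing along the intersection tree; here $\pi(x')$ denotes the projection of $x'$ to $T\cap T'$. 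Using $\gamma'$-balance of $T'$ one gets $|x, x'|_{T'} \ge \Bal(T')$, hence $|x', \pi(x')|_{T'} \ge \Bal(T') - |x, \pi(x')|$, and $|x,\pi(x')|$ is controlled because $x$ projects near $z$ and $z, \pi(x')$ both lie in $T\cap T'$ of diameter $\le |T\cap T'|$; symmetrically for $|y, x''|_T$ via $\gamma$-balance of $T$. Combining these three pieces and then invoking the Balance identity
\[
\Bal(T\cup T') = \Bal(T) + \Bal(T') - |T\cap T'| - \fourl
\]
(which follows by expanding $\Can(T\cup T') = \Can(T) + \Can(T') + |T\cap T'|$ exactly as in the proof of Lemma~\ref{MP4.5}) should collapse the estimate to $|x', x''|_{T\cup T'} \ge \Bal(T\cup T')$, using $|T\cap T'| > \fourl$ to absorb the leftover slack.

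The hard part, and where I expect to spend the most care, is handling \emph{how much} the projections $z, z'', \pi(x'), \pi(x''), y$ can spread out inside the tree $T\cap T'$: unlike the path case of Example~\ref{ex:balancing2tile}, the intersection is a genuine tree, so ``$x$ is antipodal to a point at distance $\le \fourl$ from its projection'' needs the $N_{\ell/4}(\alpha)$ hypothesis to convert tree-distance into distance along $\alpha$, and one must verify that the endpoint $y = s_\alpha(x)$ really is the crossing point forced on the geodesic from $x'$ to $x''$ rather than some other point of $T\cap T'$ (this is exactly where $\gamma$ having $y$ as an endpoint matters). I would dispatch this by noting that a wall-path from $x'$ to $x''$ in $T\cup T'$ which has $x''$ as an endpoint of the tile-wall piece $\gamma \subset T$ is constrained to enter $T$ through the edge midpoint $y$, so the relevant concatenation point is pinned; once that is established, the metric inequality is a routine chain of triangle inequalities plus the Balance identity, with the $\fourl$-neighborhood bound used exactly twice (once near $x$, once near $x''$). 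If the straightforward chain loses too much, the fallback is to argue by contradiction as in Lemma~\ref{lem:balancemotivation}: assume $|x', x''|_{T\cup T'} < \Bal(T\cup T')$, build the 2-complex obtained by folding a short wall-path onto the intersection, and contradict the Isoperimetric Inequality (Proposition~\ref{IPI}) via a cancellation count that exceeds $(d+\epsilon)|{\cdot}|\ell$.
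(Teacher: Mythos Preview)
Your high-level plan is the same as the paper's: project the free endpoints $x', x''$ onto $T\cap T'$, use $\gamma$- and $\gamma'$-balance to lower bound $|x',\pi(x')|$ and $|x'',\pi(x'')|$, and finish with the identity $\Bal(T\cup T') = \Bal(T)+\Bal(T')-|T\cap T'|-\fourl$. Two things, however, are off.

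First, a confusion in the decomposition. The geodesic in $(T\cup T')^{(1)}$ from $x'$ to $x''$ has no reason to pass through $y$; only the \emph{wall}-path is forced through $y$. So your proposed lower bound $|x',\pi(x')|+|\pi(x'),y|+|y,x''|$ is not valid. The correct (and simpler) estimate, which is what the paper uses, is
\[
|x',x''|_{T\cup T'} \;\ge\; |x',z|_{T'} + |x'',z'|_{T} \;\ge\; \Bal(T') - |x,z| \;+\; \Bal(T) - |y,z'|,
\]
where $z,z'$ are the nearest-point projections of $x',x''$ to $T\cap T'$. After plugging in the Balance identity, everything reduces to showing
\[
|x,z|_{T\cap T'} + |y,z'|_{T\cap T'} \;\le\; |T\cap T'| + \fourl .
\]

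Second, this last inequality is the actual content of the lemma, and your plan to ``use the $N_{\ell/4}$-hypothesis twice'' does not deliver it. Bounding the off-$\alpha$ pieces of $z,z'$ by $\fourl$ each and then bounding the on-$\alpha$ distances separately can give as much as $2|\alpha|+\tfrac{\ell}{2}$, which is far too weak. The point is that $x$ and $y$ are \emph{symmetric} on $\alpha$: one must exploit $x=s_\alpha(y)$ to couple the two distances, not estimate them independently. The paper isolates exactly this as Sublemma~\ref{MPsublemma}: for a tree $A$ contained in $N_q(\alpha)$, any $z,z'\in A$ and $y\in\alpha$ satisfy $|y,z|_A+|s(y),z'|_A\le |A|+\max\{|\alpha|,q\}$. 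With $A=T\cap T'$ and $q=\fourl$ this gives the needed bound in one stroke, and the remaining four lines are precisely the triangle-inequality chain you described. Your isoperimetric fallback is unnecessary.
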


Refer to Figure \ref{fig:MP4.6}. The proof of this lemma uses the following result.

\begin{figure}
	\centering
	\includegraphics[width=.4\textwidth]{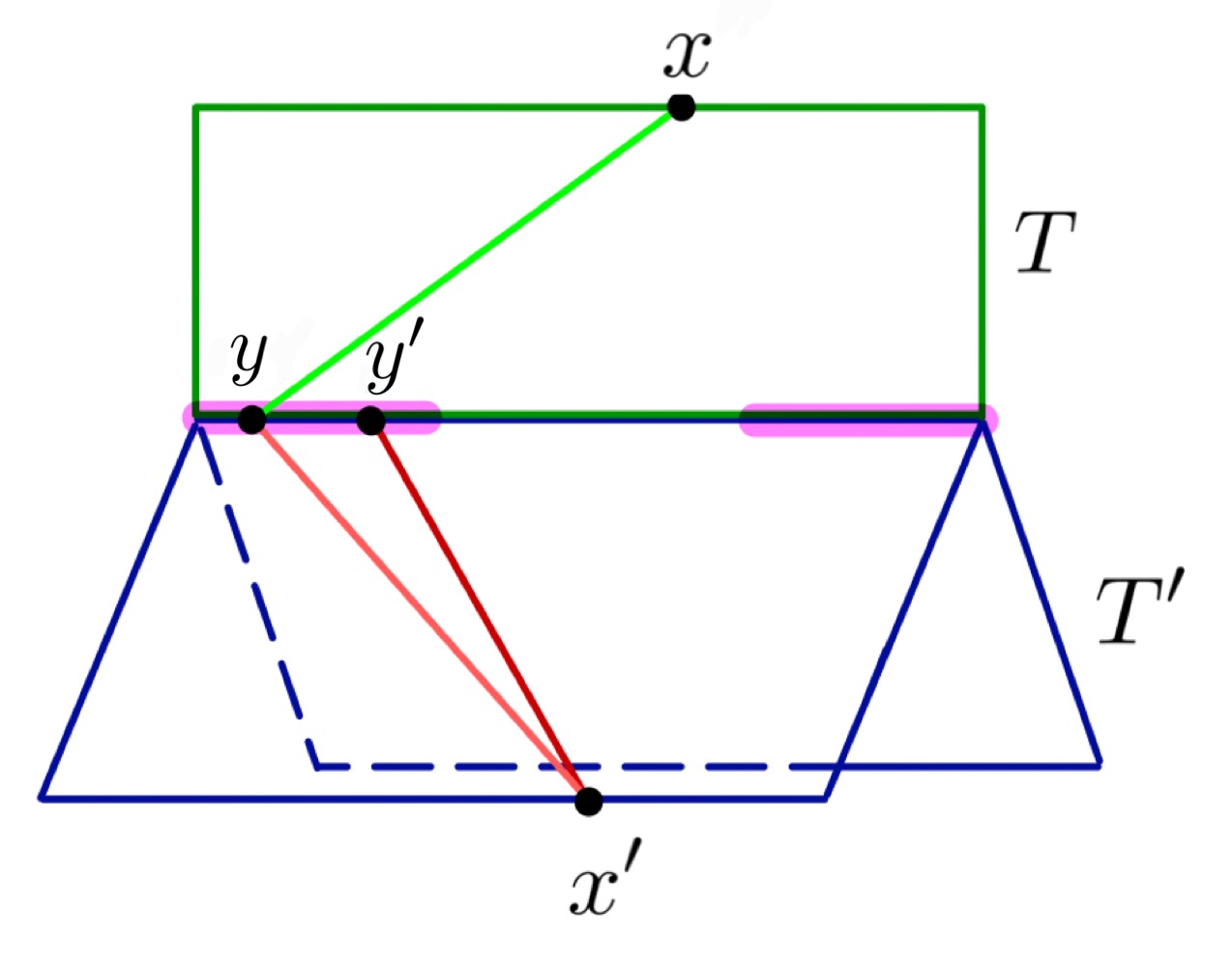}
	\caption{The tile T is $\gamma$-balanced, $T'$ is $\gamma'$-balanced, and $|T\cap T'|\geq \fourl$. If $|T\cap T'|$ is contained in the $\fourl$-neighborhood of the path $\alpha_+$, then by Lemma \ref{MP4.6}, the tile-wall in $T\cup T'$ connecting $x, y = s(y'), x'$ is balanced.}
	\label{fig:MP4.6}
\end{figure}

\begin{lemma}[\cite{MP} Sublemma 4.7]\label{MPsublemma} Let $A$ be a tree, $\alpha \subset A$ a path such that $A$ is contained in the $q$-neighborhood of $\alpha$. Let $s$ be the symmetry of $\alpha$ exchanging its endpoints. Then for any points $z, z' \in A$ and $y \in \alpha$ we have 
	\[
	|y, z|_A + |s(y), z'|_A \leq |A| + \max\{|\alpha|, q\}.
	\]
\end{lemma}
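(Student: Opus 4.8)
The plan is to push everything onto the arc $\alpha$ via the nearest-point projection $\pi\colon A\to\alpha$, which is well behaved because $A$ is a tree and $\alpha$, being an embedded geodesic arc, is convex. First I would fix the setup: write $L=|\alpha|$ and parametrize $\alpha$ by $[0,L]$, with $y$ at coordinate $t_y$, so that $s$ is the map $t\mapsto L-t$. For every $w\in A$ the projection satisfies $|w,\pi(w)|_A=d_A(w,\alpha)\le q$, the bridge $[w,\pi(w)]$ meets $\alpha$ only in $\pi(w)$, and for every $p\in\alpha$ the geodesic $[w,p]$ runs through $\pi(w)$; hence $|p,w|_A=|p,\pi(w)|_\alpha+|\pi(w),w|_A$. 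Applying this with $w=z$ (and $p=y$) and then with $w=z'$ (and $p=s(y)$) gives
\[
|y,z|_A+|s(y),z'|_A=\bigl(|y,\pi(z)|_\alpha+|s(y),\pi(z')|_\alpha\bigr)+|z,\pi(z)|_A+|z',\pi(z')|_A.
\]

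Next I would bound $|A|$ below using the two ``excursions'' $B_z=[\pi(z),z]$ and $B_{z'}=[\pi(z'),z']$. Since $B_z\cap\alpha=\{\pi(z)\}$ and $B_{z'}\cap\alpha=\{\pi(z')\}$, the subtree $\alpha\cup B_z\cup B_{z'}\subseteq A$ has total length $L+|B_z|+|B_{z'}|-e$, where $e:=|B_z\cap B_{z'}|$. In a tree the intersection of the two excursions is a common initial segment, so $e=0$ unless $\pi(z)=\pi(z')$, and in every case $e\le\min\{|B_z|,|B_{z'}|\}\le q$. Therefore $|A|\ge L+|z,\pi(z)|_A+|z',\pi(z')|_A-e$, and subtracting this from the displayed identity reduces the lemma to the purely one-dimensional claim
\[
|y,\pi(z)|_\alpha+|s(y),\pi(z')|_\alpha+e\le L+\max\{L,q\}.
\]

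To finish I would split on $e$. If $e=0$, each term on the left is at most $\diam(\alpha)=L$, so the sum is at most $2L\le L+\max\{L,q\}$. If $e>0$, then $\pi(z)=\pi(z')=:p$, and for the symmetric pair $y,s(y)$ the function $t\mapsto|t_y-t|+|(L-t_y)-t|$, which equals $|y,p|_\alpha+|s(y),p|_\alpha$ when $p$ has coordinate $t$, is convex on $[0,L]$, hence maximised at an endpoint, where it equals $L$; combined with $e\le q$ this yields $|y,p|_\alpha+|s(y),p|_\alpha+e\le L+q\le L+\max\{L,q\}$.

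I do not expect a genuine obstacle: the only step needing care is the length estimate $|A|\ge L+|B_z|+|B_{z'}|-e$ together with the observation that the overlap $e$ of the two excursions never exceeds $q$ — this is exactly where the term $\max\{|\alpha|,q\}$ comes from, $|\alpha|$ accounting for two projection points being far apart along $\alpha$ and $q$ for the two excursions running alongside one another; the rest is bookkeeping in the coordinate $[0,L]$.
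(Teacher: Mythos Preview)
Your argument is correct. The paper does not supply its own proof of this sublemma; it simply quotes it from \cite{MP} and moves on to the proof of Lemma~\ref{MP4.6}, so there is nothing to compare against. Your projection-to-$\alpha$ decomposition, the length bound $|A|\ge L+|B_z|+|B_{z'}|-e$ coming from the subtree $\alpha\cup B_z\cup B_{z'}$, and the case split on whether the two excursions overlap are all sound; in particular your observation that $B_z\cap B_{z'}$ can have positive length only when $\pi(z)=\pi(z')$ (since every point of $B_z$ projects to $\pi(z)$) is exactly what forces $e\le q$, and the endpoint calculation $|y,p|_\alpha+|s(y),p|_\alpha\le L$ for a common projection $p$ is straightforward.
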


\begin{proof}[Proof of Lemma \ref{MP4.6}]
	Apply Lemma \ref{MPsublemma} with $A = T\cap T'$ and $q = \fourl$. Let $z, z'$ be the nearest point projections in $T\cup T'^{(1)}$ of $x, x'$, respectively, to $T\cap T'$. Then $|y, z| + |y', z'| \leq |T\cap T'| + \fourl$, so 
	\begin{align*}
	|x, x'|_{T\cup T'} &\geq |x, z|_T + |x', z'|_{T'} \\
	&\geq |x, y|_{T'} - |y, z|_{T'}+|x', y'|_T - |y', z'|_T  \\
	&\geq \fourl(|T| + 1) - \Can(T) + \fourl(|T'|+1) - \Can(T') - (|T\cap T'| + \fourl) \\
	&= \Bal(T\cup T'),
	\end{align*}
	as desired.
\end{proof}

\begin{remark}
	The proof of the previous lemma does not require that $T, T'$ are potiles; it merely requires that $T\cap T'$ is a connected tree of size at most $\frac{\ell}{2}$.
\end{remark}

As a special case of Lemma \ref{MP4.6} when $|T\cap T'|<\fourl$, we have the following:

\begin{lemma}[See \cite{MP} Corollary 4.8]\label{MP4.8} Let $T = \sh_S(\gamma), T' =\sh_{S'}(\gamma')$ be shards in $S, S'$ which are $\gamma, \gamma'$-balanced, respectively, and do not share $2$-cells. Let the endpoints of $\gamma$ be $x, y$ and the endpoints of $\gamma'$ be $y, x'$, where $y \in T\cap T'$ is an edge midpoint such that $T\cap T'$ is contained in the $\fourl$-neighborhood of $y$. Then 
	\[
	|x, x'|_{T\cup T'} \geq \Bal(T\cup T').
	\]
\end{lemma}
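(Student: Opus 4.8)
The plan is to derive this as the degenerate case of Lemma \ref{MP4.6}, exactly as the statement suggests. The hypothesis says $y \in T \cap T'$ is an edge midpoint with $T \cap T' \subset N_{\ell/4}(y)$; the natural move is to take $\alpha$ to be the trivial (length-zero) path at $y$, so that $s_\alpha$ is the identity symmetry and in particular $s_\alpha(y) = y$. With this choice the hypothesis $T \cap T' \subset N_{\ell/4}(\alpha)$ of Lemma \ref{MP4.6} is precisely the given condition, and the identity $s_\alpha(x) = y$ that Lemma \ref{MP4.6} requires is automatic here because the shared endpoint of $\gamma$ and $\gamma'$ is literally $y$ — there is no ``bent'' wall path to unbend, the two wall paths already meet at $y$.

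First I would check that Lemma \ref{MP4.6} applies even though its statement assumes $|T\cap T'| > \fourl$ while here we are in the case $|T\cap T'| < \fourl$. Inspecting the proof of Lemma \ref{MP4.6}, the inequality $|T\cap T'| > \fourl$ is never used: the proof only invokes Lemma \ref{MPsublemma} with $q = \fourl$ (which needs $T\cap T'$ to be a tree contained in the $\fourl$-neighborhood of $\alpha$, true here by hypothesis with $\alpha = \{y\}$ and $|T\cap T'| \le \ell/2$ by Lemma \ref{lemma:trees}), together with the $\gamma$- and $\gamma'$-balance of $T$ and $T'$ and the triangle inequality via nearest-point projections. So the argument goes through verbatim. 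Concretely, applying Lemma \ref{MPsublemma} with $A = T\cap T'$, $\alpha = \{y\}$, $q = \fourl$: for the nearest-point projections $z, z'$ of $x, x'$ to $T\cap T'$ in $(T\cup T')^{(1)}$ we get $|y,z|_A + |y, z'|_A \le |T\cap T'| + \fourl$ (note $\max\{|\alpha|, q\} = \max\{0, \fourl\} = \fourl$), and then
\[
|x,x'|_{T\cup T'} \ge |x,y|_T - |y,z|_T + |x',y|_{T'} - |y,z'|_{T'} \ge \Bal(T) + \Bal(T') - (|T\cap T'| + \fourl),
\]
using that $T$ is $\gamma$-balanced (so $|x,y|_T \ge \Bal(T)$) and $T'$ is $\gamma'$-balanced (so $|x',y|_{T'} \ge \Bal(T')$). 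The right-hand side equals $\fourl(|T|+1) + \fourl(|T'|+1) - \Can(T) - \Can(T') - |T\cap T'| - \fourl = \fourl(|T\cup T'| + 1) - \Can(T\cup T') = \Bal(T\cup T')$, since $\Can(T\cup T') = \Can(T) + \Can(T') + |T\cap T'|$ (the two shards share no 2-cells, so the union is $(K,K')$-glued along the single tree $T\cap T'$).

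I do not anticipate a serious obstacle here; the only point requiring care is the bookkeeping check that the proof of Lemma \ref{MP4.6} genuinely does not need $|T\cap T'| > \fourl$ and that the choice $\alpha = \{y\}$ is legitimate (a single point is a valid degenerate path, and a tree is contained in the $\fourl$-ball about one of its points exactly when its diameter is at most $\fourl$, which is weaker than the stated hypothesis). Everything else is the same projection-and-triangle-inequality computation already carried out for Lemma \ref{MP4.6}, so the proof will be a two-line reduction.
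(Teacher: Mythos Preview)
Your proposal is correct and is exactly the paper's approach: the paper's entire proof is the single line ``Apply Lemma \ref{MP4.6} with $\alpha = \{y\}$, and $q = \fourl$.'' Your additional check that the hypothesis $|T\cap T'| > \fourl$ from Lemma \ref{MP4.6} is never actually used in its proof is a worthwhile piece of bookkeeping that the paper leaves implicit.
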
				

\begin{proof}
	Apply Lemma \ref{MP4.6} with $\alpha = \{y\}$, and $q = \fourl$.
\end{proof}

\begin{lemma}\label{smallintersections} If $T, T'$ are tiles in $\T_c$ and $T'$ is younger than $T$, then for any $2$-cell $C \in T'$, $|T\cap C| < \fourl$.
\end{lemma}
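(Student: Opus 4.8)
The plan is to argue by contradiction: suppose $T, T' \in \T_c$ with $T'$ younger than $T$, and some $2$-cell $C \in T'$ with $|T \cap C| \ge \fourl$. The key structural fact to exploit is Remark \ref{rem:subtileage}: since $T, T' \in \T_c$ (and, via Remark \ref{boundedsize}, $|T|, |T'| \le 5$ under the density hypothesis — though for the core-tile case we may want the $|T|,|T'|\le 3$ version, so I should first check whether the statement really needs arbitrary core tiles or just small ones), the tile $T$ was completed before any $2$-cell of $T'$ was ever formed into a larger tile. In particular, at the moment $C$ was a stand-alone $2$-cell in $\T^0$, the tile $T$ already existed in the tile collection. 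So at that stage we have two tiles in the current collection, namely $T$ and the $1$-tile $C$, with $|T \cap C| \ge \fourl$.

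Now I split on the size of the overlap. If $|T \cap C| > \fourl$, then Step 1 of Construction \ref{tileconstruction} (Core Intersections) was still applicable at that moment — there is a pair of tiles $T, C$ sharing no $2$-cells (they share none since $C$ is a single $2$-cell not lying in $T$; if $C \subset T$ then $|T\cap C| = |C|$ is the full $2$-cell and $C$ would not be a separate tile, contradiction) with intersection exceeding $\fourl$ and of size $\le 5$ (the size bound via Lemma \ref{lemma:trees} and Remark \ref{rem:maxtilesize}/\ref{boundedsize}). But Step 1 only terminates when no such pair exists, and once terminated we never return to it. Since $T \in \T_c$ was produced by Step 1 and the construction had moved past Step 1, the existence of such a pair $T, C$ at that later stage is a contradiction — unless $T$ itself was only completed at the very end of Step 1, but then $C$ (being an original $2$-cell) coexists with $T$ throughout and the pair $(T,C)$ would have been available to be glued before Step 1 terminated, again a contradiction. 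The remaining case is $|T\cap C| = \fourl$ exactly; here I need to check that $T \cup C$ is a potile: $\Can(T\cup C) = \Can(T) + \Can(C) + |T\cap C| \ge \fourl(|T|-1) + 0 + \fourl = \fourl\,|T| = \fourl(|T\cup C| - 1)$, so indeed $T \cup C$ is a potile. This means Step 2 (Small Intersections) would eventually apply to the pair $(T, C)$, moving $T$ into $\T_n$ — contradicting $T \in \T_c$, since the three sets $\T_1, \T_c, \T_n$ are pairwise disjoint and membership in $\T_c$ is permanent for surviving core tiles... here I must be careful, because Step 1's core tiles can later be absorbed, but the hypothesis is $T \in \T_c$ in the \emph{terminal} collection, so $T$ is never moved; yet if $(T,C)$ is ever an eligible Small-Intersections pair, $T$ gets moved to $\T_n$, contradiction.

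I expect the main obstacle to be bookkeeping about \emph{when} tiles coexist in the collection and ruling out that $C$ or $T$ got absorbed into something else in the meantime. Specifically: $C$ is an original $2$-cell, but it could have been glued into a larger tile before $T$ finished forming, or $T$ could have absorbed $C$. The phrase "$T'$ is younger than $T$" combined with Remark \ref{rem:subtileage} is exactly the lever that forces the right chronology — $T$ is complete before the first $2$-tile inside $T'$ (hence inside any $2$-cell of $T'$, in particular $C$) is formed, so at that instant $C$ is still a raw $2$-cell and $T$ is a finished tile, and both are simultaneously present. The delicate point is that $T\cap C$ having $2$-cells is impossible ($C \not\subset T$ since $C$ is its own tile and by Proposition \ref{prop:intersectionproperties}(1) tiles in $\T_c$ contain no proper sub-tiles, so $C \notin \T_c$ cannot sit inside $T$, and $C \in \T_1$ is not a sub-tile of $T$ either), so the overlap $T \cap C$ is a genuine subpath and Lemma \ref{lemma:trees} applies. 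I would write the final argument as the two-case split above, leading in each case to a violation of the termination condition of Step 1 or the applicability of Step 2, both contradicting $T \in \T_c$ in the terminal collection.
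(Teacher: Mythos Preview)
Your argument has a genuine gap in the main case $|T\cap C|>\fourl$. You correctly arrange (via Remark~\ref{rem:subtileage}) a moment at which both $T$ and the single $2$-cell $C$ are simultaneously present and the pair $(T,C)$ is eligible for Step~1. But eligibility is not the same as being chosen: Step~1 selects \emph{some} eligible pair according to the maximality rule, and nothing you have said prevents $C$ from being absorbed into another tile (indeed, into the ancestry of $T'$) while $T$ remains untouched. Your sentence ``the pair $(T,C)$ would have been available to be glued before Step~1 terminated, again a contradiction'' is the point where the argument fails---availability does not force gluing, and once $C$ is absorbed the pair $(T,C)$ simply ceases to exist, with $T$ still intact. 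The same objection kills the $|T\cap C|=\fourl$ case: by the time Step~2 begins, $C$ is already inside $T'$ and is no longer a tile in the collection, so the pair $(T,C)$ is never on the table for Step~2.

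A second issue is your reliance on Remark~\ref{rem:subtileage}, which carries the hypothesis $|T|,|T'|\le 3$ that the lemma does not assume; you flag this yourself, and it is a real restriction.

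The paper avoids both problems with a one-line argument that works at a single well-chosen moment: the step at which $T'$ itself is created as $S\cup S'$, where $S$ is the constituent subtile containing $C$. At that instant $T$, $S$, $S'$ are all present (since $T$ is older than $T'$ and survives to the terminal $\T_c$), and $|T\cap S|\ge |T\cap C|\ge \fourl$; the maximality condition of Step~1 then says the construction should have formed $T\cup S$ rather than $T'=S\cup S'$, a contradiction. No appeal to Remark~\ref{rem:subtileage} is needed, and there is no chronology about when $C$ ceases to be a stand-alone $2$-cell.
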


\begin{proof}
	Suppose not. Then $T'$ is composed of two tiles $S, S'$, where (without loss of generality) $S$ contains $C$.  Prior to the step in which $T'$ was formed, we must have had $S, S', T$ tiles in our collection. Then $|T\cap S|\geq |T\cap C| \geq \fourl$, so by the maximality condition in Step 1 of Construction \ref{tileconstruction} we should have created $T \cup S$, rather than $T'$. 
\end{proof}

\begin{lemma}\label{lemma:balanceandD'}
	If $T, T', T\cup T' \in \T_c$ share no 2-cells, $T'$ is younger than $T$, and $|T| \leq |T'| = 3$, then 
	\[
	\Bal(T\cup T') \leq \frac{5\ell}{4} - 2\Can(D') - |T\cap T'| 
	\]
	where $D'$ is the subtile of $T'$ containing the first two 2-cells that were glued together in $T'$.
\end{lemma}

\begin{proof}
	Let $T, T'$ be as stated, and let $T'$ be younger than $T$. Note that $|T|\geq 2$, otherwise $T$ would be younger than $T'$. Let $D$ be the first two 2-cells glued together in $T$, and similarly let $D'$ be the first two 2-cells glued together in $T'$. By Remark \ref{rem:subtileage}, $\Can(D)\geq \Can(D')$.
	
	Since $|T'| \leq 3$, $T' = D' \cup C'$ for some 2-cell $C'$. If $|T| = 3$, then $T = D \cup C$ for some 2-cell $C$. Let $\beta = D \cap C$ and $\beta' = D' \cap C'$.  Then $|\beta| \geq \fourl$, and $\fourl|T| - \beta \leq \frac{3\ell}{4} - \fourl = \frac{\ell}{2}$. Additionally, $|\beta'|\geq \fourl.$. 
	
	Putting this all together, we get
	\begin{align*}
	\Bal(T\cup T') 
	&= \fourl(|T|+|T'| + 1) - \Can(T\cup T')\\
	& = \fourl|T| + \fourl|T'| + \fourl - \Can(T) - \Can(T') - |T\cap T'| \\
	& = \fourl|T| + \frac{3\ell}{4} + \fourl - (|\beta| + \Can(D))  - (|\beta'| + \Can(D')) - |T\cap T'|\\
	&\leq\frac{\ell}{2} + \frac{3\ell}{4} +  \fourl - \fourl - 2\Can(D') - |T\cap T'| \\
	&= \frac{5\ell}{4} - 2\Can(D') - |T\cap T'|.\\
	\end{align*}
	
	If $|T| = 2$, then let $T = D$, so by the same argument (substituting $|\beta| = 0$ and $\fourl|T| - |\beta| = \frac{\ell}{2}$), we get the desired result.
\end{proof}

\subsection{Construction of Tile Walls} \label{subsec:wall construction}

In this section, we build the tile-walls and prove that they are balanced. The construction of tile-walls will parallel the iterative process of constructing tiles. The proof that the resulting tiles are balanced reduces to checking several cases, based on the possible arrangements of tile-walls in $T\cup T'$.

As we saw in Example \ref{ex:antipodalwalls}, when two tiles are glued together along a sufficiently large intersection, the tile-walls obtained by concatenating tile-walls in each constituent tile may result in highly `bent' tile-walls, which have two endpoints that are close together. Glueing this to another tile so that the intersection contains endpoints could result in a self-intersection. In Example \ref{ex:balancing2tile}, we identified the edges of the path $T\cap T'$ for which concatenated tile-walls are sharply bent in $T\cup T'$; specifically, these are the edges which are near the endpoints of $T\cap T'$. In general, the intersection of two potiles is not a path, but tree. Identifying the edges which may lead to a bent tile-wall is thus a more subtle problem. 

\begin{defn} A tree $A$ with $\frac{\ell}{2}\geq |A|\geq \fourl $  is \emph{long} if $\frac{1}{2}(|A| + \frac{\ell}{4})) < \diam A$. If $A$ is not long, then it is \emph{round}.
\end{defn}

Figure \ref{fig:largeintersection} illustrates this definition, and the following lemma.

\begin{figure}
	\centering
	\includegraphics[width=.8\textwidth]{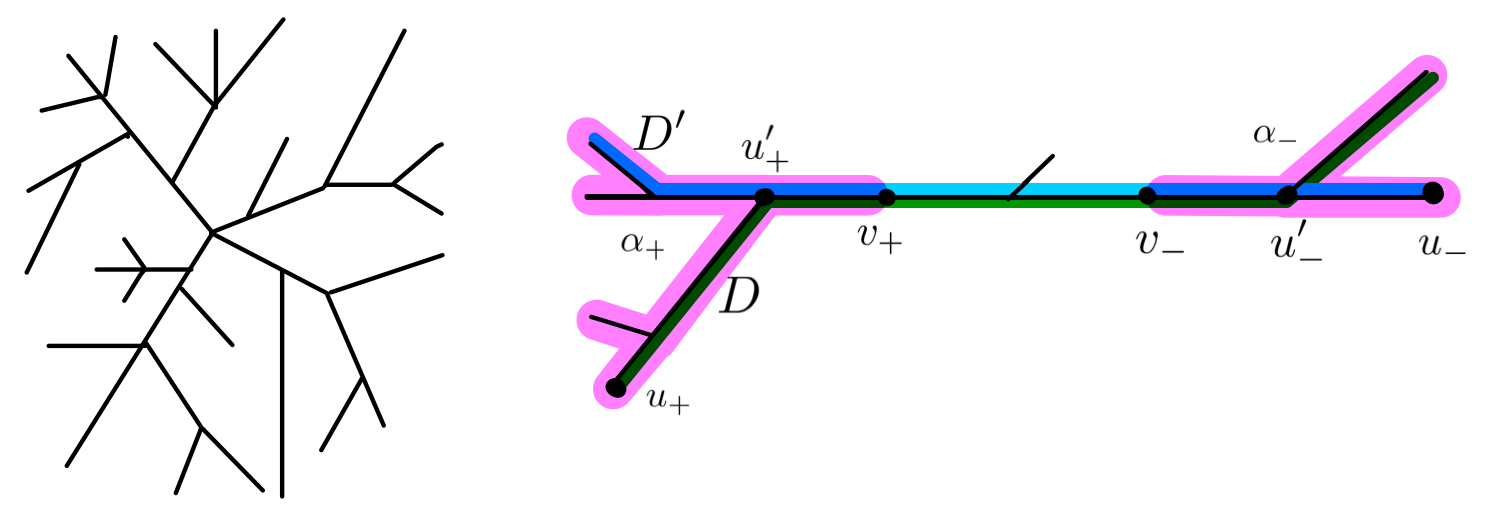} 

	\caption{At left, we have a `round' tree. The right image shows a `long' tree along with potential diameters $D, D'$ in blue and green; points $u'_\pm, u_\pm, v_\pm$; and regions $\alpha_\pm$ highlighted in pink. }
	\label{fig:largeintersection}
\end{figure}

\begin{lemma}\label{lem: alpha}
	If $A$ is a long tree, there exist regions $\alpha_\pm$ so that for any diameter $D$ of $A$ with endpoints $u_\pm$, $\alpha_+$ is the complement of the $\fourl$-neghborhood of $u_-$, and analogously for $\alpha_-$.
\end{lemma}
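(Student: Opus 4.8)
The plan is to show that for a long tree $A$, the ``extreme'' pieces of $A$ — the parts far from each end of a diameter — are actually independent of which diameter is chosen, and hence can be taken as the canonical regions $\alpha_\pm$. First I would fix an arbitrary diameter $D$ of $A$ with endpoints $u_\pm$, and define $\alpha_+ := A \setminus N_{\ell/4}(u_-)$ and $\alpha_- := A \setminus N_{\ell/4}(u_+)$. The key observation is that in a tree, the nearest-point projection onto the path $D$ is well-defined, and because $A \subseteq N_{\ell/4}$ of (any) diameter would contradict longness in the wrong direction, I should instead use: every point of $A$ is within distance $\tfrac12(|A| - \diam A) \le \tfrac12(|A| - \tfrac12(|A|+\tfrac{\ell}{4})) = \tfrac14(|A| - \tfrac{\ell}{4}) < \tfrac{\ell}{4} \cdot \tfrac{?}{}$… more carefully, since $A$ is a tree, the sum of all edge lengths off of $D$ is $|A| - \diam A$, so every point of $A$ lies within distance at most $|A| - \diam A$ of $D$, and in fact within that distance of one of the two ``halves'' of $D$.

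The main structural step is to compare two diameters $D, D'$ with endpoints $u_\pm, u'_\pm$. Since both are geodesics realizing $\diam A$ in a tree, the standard fact is that $D$ and $D'$ must share a common midpoint segment — more precisely, if $c$ is the point on $D$ equidistant (at distance $\diam A / 2$, up to parity) from $u_+$ and $u_-$, then $D'$ passes through $c$ as well, and $u'_+$ lies in the same ``side'' of the tree (the component of $A \setminus \{c\}$) as one of $u_+, u_-$, with $d(c, u'_+) = d(c, u_+) = \diam A/2$. The longness hypothesis $\diam A > \tfrac12(|A| + \tfrac{\ell}{4})$ is what forces the region near $u_-$ (resp. $u'_-$) to be so deep into one branch that $N_{\ell/4}(u_-) = N_{\ell/4}(u'_-)$: indeed both $u_-$ and $u'_-$ are at distance $\diam A / 2 > \tfrac14(|A| + \tfrac{\ell}{4})$ from $c$, and the total ``excess'' mass $|A| - \diam A < \tfrac12(|A| - \tfrac{\ell}{4})$ is too small to let the two far endpoints sit in different branches at the relevant scale. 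I would make this quantitative: show $d(u_-, u'_-) \le 2(|A| - \diam A) < |A| - \tfrac{\ell}{4}$, hmm, that bound alone is not obviously $< \tfrac{\ell}{4}$… so the real argument must be that $u_-$ and $u'_-$ are in the \emph{same} branch at $c$ and the branch beyond distance $\diam A/2 - |A| + \diam A$ from $c$ is a single arc, forcing $N_{\ell/4}(u_-)$ and $N_{\ell/4}(u'_-)$ to coincide.

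Concretely, the steps in order: (1) recall that in a tree any two diameters intersect in a sub-path containing the ``center'', and prove the branch-sharing statement; (2) use longness to bound the off-diameter mass and conclude that near each endpoint $D$ coincides with $D'$ out to distance greater than $\ell/4$ from the point where they could first diverge, hence $N_{\ell/4}(u_\pm) = N_{\ell/4}(u'_\pm)$ (after matching up $u_+ \leftrightarrow u'_+$ appropriately); (3) conclude $\alpha_\pm$ is independent of the choice of $D$, and record the labelling of $u_\pm, v_\pm$ as in Figure~\ref{fig:largeintersection}. I expect step~(2) — pinning down exactly how far into the branch the two diameters must agree, and verifying the inequality $\diam A/2 - (|A| - \diam A) > \ell/4$ needed to absorb the $\ell/4$-neighborhoods — to be the main obstacle; this is precisely where the constant $\tfrac12(|A| + \tfrac{\ell}{4})$ in the definition of ``long'' is calibrated, so the verification should be a short but slightly delicate computation using $|A| \le \ell/2$ and $|A| \ge \ell/4$.
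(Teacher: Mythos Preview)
Your overall strategy---compare two diameters and use the longness hypothesis to show they overlap enough that the $\ell/4$-neighborhoods of their endpoints coincide---is exactly the paper's approach. But your step~(2) is where the execution goes off track, in two ways.

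First, the inequality you aim for, $\tfrac{1}{2}\diam A - (|A| - \diam A) > \ell/4$, is not the right target and in fact fails: take $|A| = \ell/2$ and $\diam A$ just above $3\ell/8$; the left side is about $\ell/16$. The paper's computation is much simpler and avoids the center entirely: since $D \cup D'$ is a subtree of $A$, inclusion--exclusion gives $|D \cap D'| \ge |D| + |D'| - |A| = 2\diam A - |A| > \ell/4$, the last inequality being exactly the longness condition. That is the single line you are searching for.

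Second, you actually had the right bound in hand and dropped it: you note $d(u_-,u'_-) \le 2(|A| - \diam A) < |A| - \ell/4$ and then say this ``is not obviously $< \ell/4$''---but it is, since the definition of long tree includes $|A| \le \ell/2$, so $|A| - \ell/4 \le \ell/4$. More to the point, the quantity you really need is $r_- := d(u_-, u''_-)$ where $u''_-$ is the branching endpoint of $D \cap D'$; one gets $r_- + r_+ = \diam A - |D\cap D'| \le |A| - \diam A < \ell/8$. This is what makes the neighborhoods agree: for any point $p$ whose geodesic to the minus side passes through $u''_-$ one has $d(p,u_-) = d(p,u''_-) + r_-$ independently of which diameter endpoint $u_-$ is chosen, while any $p$ in a branch at $u''_-$ not containing $u'_+$ satisfies $d(p, u''_-) \le r_-$ (else it would be farther than $\diam A$ from $u_+$), hence lies within $2r_- < \ell/4$ of \emph{every} minus-side diameter endpoint. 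Note that merely bounding $d(u_-,u'_-) < \ell/4$ does not by itself give $N_{\ell/4}(u_-) = N_{\ell/4}(u'_-)$; you need this structural observation, which the paper leaves implicit after recording $|D\cap D'| \ge \ell/4$.
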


\begin{proof}
	If $A$ admits only one diameter $D$, let $u_\pm$ be the endpoints of $D$. If that is not the case, let $D, D'$ be two distinct diameters of $A$. 	
	Then	
	\[
	|D| + |D'| - |D\cap D'| = 2\diam(A) - |D\cap D'| \leq |A|.
	\]
	Since $A$ is a long tree, 
	$2\diam(A) > |A| + \fourl$, so 
	\[
	|D\cap D'| \geq \fourl.
	\]
	Let $u'_\pm$ be the end points of $D\cap D'$.  The two components of $\overline{A - (D\cap D')}$ 
	containing $u'_\pm$ are the \emph{ends} of $A$. Let $u_\pm$ be any point in the 
	end containing $u'_\pm$ which is furthest from $u'_\pm$.  Note that if $u'_\pm$ not an leaf of $A$, then it is a branching point of $A$, otherwise $D\cap D'$ could be extended. So either $u_\pm= u'_\pm$, or there are at least two options for $u_\pm$.   Note that the path from $u_+$ to $u_-$ is a diameter, and any diameter can be chosen in this way.
	
	Let $\alpha_\mp$ be the complement of $N_{\ell/4}(u_\pm)$ in $T\cap T'$. Notice that 
	$\alpha_\pm$ contain the ends of $T\cap T'$, and no edge is contained in both ends. Importantly, the assignment of $\alpha_\pm$ does not depend on the choice of $D, D', u_\pm$; indeed, if $u_1$ and $u_2$ are both endpoints of diameters in the positive end, then $|u_1, u'_+| = |u_2, u'_+|$. Since $|D\cap D'| \geq \fourl$, the sum of the sizes of the ends is $|A| - |D\cap D'| \leq \fourl$, and thus for any point $x \in N^c_\fourl(u_1)$ the path $u_1 x$ must overlap with the path $u_2 x$, and they both contain $u_+$. So $|u_2, x| = |u_1, x|$, $N^c_\fourl(u_1) = N^c_\fourl(u_2)$, and $\alpha_\pm$ does not depend on the choice of endpoint of $D$.
\end{proof}

This only identifies these regions in the case that $T\cap T'$ is a long tree. If $T\cap T'$ is a round tree, then the tile-walls which result from concatenation are balanced.

\begin{lemma}\label{lemma:roundtrees}
	Let $T, T'$ be tiles which share no 2-cells. If $|T\cap T'| \geq \fourl$ and $T\cap T'$ is a round tree, and $\lambda, \lambda'$ are balanced tile-walls in $T, T'$, respectively, then  
	\begin{enumerate}
		\item The tile-walls $\lambda, \lambda'$ are balanced in $T\cup T'$, and 
		\item If $\lambda, \lambda'$ share an endpoint $y \in T\cap T'$, then the concatentation of these two tile-walls is a balanced tile-wall in $T\cup T'$.
	\end{enumerate}
\end{lemma}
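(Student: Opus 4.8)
The strategy is to mimic the case analysis from Example~\ref{ex:balancing2tile}, but replacing the quantitative inputs about paths with the combinatorial estimate coming from the hypothesis that $A = T\cap T'$ is round. Recall that $A$ round means $\diam(A) \leq \tfrac12(|A| + \fourl)$. First I would fix a wall-path $\gamma$ in $T\cup T'$ with endpoints $x, x'$ and sort into cases according to how $\gamma$ sits: (i) $\gamma$ lies entirely in $T$ (or entirely in $T'$); (ii) $\gamma$ genuinely traverses both $T$ and $T'$, so it decomposes as a concatenation $\gamma = \gamma_T * \gamma_{T'}$ meeting at a vertex $y \in T\cap T'$ — this is the situation of part~(2). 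In case~(i), $\sh_{T\cup T'}(\gamma)$ is still a shard in $T$ (or $T'$) — here I would appeal to Lemma~\ref{MP4.5} with the roles $(T, T')$ read as $(\sh_T(\gamma), T')$, and since $|T\cap T'| \geq \fourl$ that lemma immediately gives $|x,x'|_{T\cup T'} \geq \Bal(T\cup T')$, which is what $\gamma$-balance of $T\cup T'$ requires in that case (using that $\Bal(\sh_{T\cup T'}(\gamma)) \leq \Bal(T\cup T')$ when the shard is all of $T\cup T'$, and otherwise the smaller bound suffices).

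The core of the argument is case~(ii), which is exactly part~(2). Here $\gamma = \gamma_T * \gamma_{T'}$ with $\gamma_T \subset T$, $\gamma_{T'}\subset T'$, sharing the endpoint $y\in T\cap T'$; let $x = $ the other endpoint of $\gamma_T$ and $x' = $ the other endpoint of $\gamma_{T'}$, and let $z, z'$ be the nearest-point projections of $x, x'$ respectively onto $A = T\cap T'$ inside $(T\cup T')^{(1)}$ (all of these paths are geodesic by Corollary~\ref{cor:geodesics} since $|A|\leq \ell/2$). The plan is to bound $|x,x'|$ below by $|x,z|_T + |z, z'|_A + |z', x'|_{T'}$ is not quite right — rather I would use $|x,x'|_{T\cup T'} \geq |x, z|_T + |x', z'|_{T'}$ together with the balance of $T$ for $\gamma_T$ and of $T'$ for $\gamma_{T'}$: namely $|x, y|_T \geq \Bal(\sh_T(\gamma_T))$ and $|x', y|_{T'}\geq \Bal(\sh_{T'}(\gamma_{T'}))$, giving
\[
|x,z|_T \geq \Bal(\sh_T(\gamma_T)) - |y, z|_A, \qquad |x', z'|_{T'} \geq \Bal(\sh_{T'}(\gamma_{T'})) - |y, z'|_A.
\]
Now the roundness of $A$ enters: since $y, z, z'$ all lie in $A$ and $A$ is a tree, $|y,z|_A + |y,z'|_A \leq |y,z|_A + |y,z'|_A$; the key point is that the sum $|y,z|_A + |y,z'|_A$ is at most $\diam(A) + (\text{small error})$ — more precisely one bounds it by $|A|$ trivially, but to beat $\Bal$ we need the roundness bound $\diam(A)\leq \tfrac12(|A|+\fourl)$ applied to the geodesic through $z$ and $z'$ and the fact that $y$ is between, or close to between, them. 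Combining with $\Bal(\sh_T(\gamma_T)) + \Bal(\sh_{T'}(\gamma_{T'})) = \fourl(|\sh_T\gamma_T| + |\sh_{T'}\gamma_{T'}| + 2) - \Can(\sh_T\gamma_T) - \Can(\sh_{T'}\gamma_{T'})$ and the cancellation identity $\Can(T\cup T') = \Can(T) + \Can(T') + |A|$ should yield $|x,x'| \geq \Bal(T\cup T') = \Bal(\sh_{T\cup T'}(\gamma))$, since the shard of a traversing path in a Step-1/Step-3 union is the whole union.

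\textbf{Expected main obstacle.} The delicate point is the estimate on $|y,z|_A + |y,z'|_A$ in terms of $\diam(A)$. Since $z$ is the projection of $x$ and $z'$ the projection of $x'$, there is no a priori reason $y$ lies on the geodesic from $z$ to $z'$ in $A$; it is merely \emph{some} vertex of $A$. One needs to argue that the relevant combination still obeys the round bound — most likely by observing that $|y,z|_A + |y,z'|_A \leq 2\diam(A) - |z,z'|_A \leq 2\diam(A)$ is too lossy, so instead one wants $|y,z|_A + |y,z'|_A = |z,z'|_A + 2\,|y, [z,z']|_A$ where $[z,z']$ is the geodesic and $|y,[z,z']|_A$ is the distance from $y$ to it; bounding $|z,z'|_A \leq \diam(A)$ and the branch-length $|y,[z,z']|_A$ also by roughly $\diam(A) - \fourl$ or similar, then invoking roundness. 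Getting the constants to line up exactly — so that the $\fourl$'s and $|A|$'s cancel against $\Bal(T\cup T')$ — is where the real work is, and it is the analogue of the final two displayed inequalities in Example~\ref{ex:balancing2tile}; I would expect the round hypothesis to be \emph{exactly} what makes the worst case $y$ far from $[z,z']$ still acceptable. Part~(1) then follows from part~(2) applied to each wall-path, together with case~(i) above.
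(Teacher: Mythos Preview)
Your approach is essentially the paper's, and case~(i) via Lemma~\ref{MP4.5} is exactly right. But your ``expected main obstacle'' is not an obstacle at all: the crude bound $|y,z|_A + |y,z'|_A \leq 2\diam(A)$ that you dismiss as ``too lossy'' is precisely what the paper uses, and it suffices. Roundness says $2\diam(A) \leq |A| + \fourl$, and a direct computation gives
\[
\Bal(T) + \Bal(T') = \Bal(T\cup T') + |T\cap T'| + \fourl,
\]
so
\[
|x,x'| \;\geq\; \Bal(T) + \Bal(T') - 2\diam(A) \;\geq\; \Bal(T\cup T') + |A| + \fourl - \bigl(|A| + \fourl\bigr) \;=\; \Bal(T\cup T').
\]
No analysis of where $y$ sits relative to the geodesic $[z,z']$ is needed.

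You also overcomplicate by tracking $\Bal(\sh_T(\gamma_T))$ rather than $\Bal(T)$. The hypothesis is that $\lambda$ is a balanced tile-wall \emph{in $T$}, which by definition means $|x,y|_T \geq \Bal(T)$; likewise for $\lambda'$ in $T'$. Using the full $\Bal(T), \Bal(T')$ is exactly what makes the identity above line up.
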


\begin{proof}
	Consider the second situation. Let the endpoints of $\lambda, \lambda'$ be $x, y$ and $x', y$, respectively. Let $z$ (resp. $z'$) be the nearest point projection of $x$ (resp. $x'$) to $T\cap T'$.  Then 
	\begin{align*}
	|x, x'|_{T\cup T'} &\geq |x, z|_T + |x', z'|_{T'} \\
	&\geq |x,y|_T - |y,z|_T + |x', y|_{T'} - |y, z'|_{T'} \\
	&\geq \Bal(T) + \Bal(T') - (|y,z|_{T\cap T'} + |y,z'|_{T\cap T'}) \\
	&\geq \Bal(T) + \Bal(T') - 2\diam(T\cap T') \\
	&= \Bal(T\cup T') + |T\cap T'| + \fourl - 2\diam(T\cap T') \\
	&\geq \Bal(T\cup T') + |T\cap T'| + \fourl - (|T\cap T'| + \fourl)\\
	&= \Bal(T\cup T').
	\end{align*}
	
	Now, if $\lambda$ is a wall path in $T$ or $T'$ with endpoints $x, x'$, then $|x, x'|\geq \Bal(T)$ (or $\Bal (T')$, respectively.) By Lemma \ref{MP4.5}, 
	\[
	|x, x'|_{T\cup T'} \geq \Bal(T\cup T') + |T\cap T'| - \fourl \geq \Bal (T\cup T'),
	\]
	which is what we wanted to prove.
\end{proof}

This means that when glueing two tiles together, the tile-walls resulting from concatenation will only be unbalanced in the case that $T\cap T'$ is a long tree. Therefore, this is the only case in which we will alter tile-walls:

\begin{construction}[Tile-Walls $\Gamma_i$]\label{wallconstruction}
	We begin with tile-walls in $\T^0$ given by laying an edge between any antipodal edge midpoints in $T$. 
	
	\begin{enumerate}
		\item[Step 1]: (Core Intersections) We have two tiles $T, T'$ such that $|T\cap T'|\geq \fourl.$ There are 
		two cases, depending on whether $T\cap T'$ is a round tree or a long tree.
		\begin{enumerate}
			\item \textit{Round Trees} Suppose $\diam(T\cap T') 
			\leq \frac{1}{2} (|T\cap T'| + \fourl)$. 
			
			The tile-wall structure is generated by connecting walls in $T$ to walls in $T'$ along identified edges.
			
			\item  \textit{Long Trees} Suppose that $\frac{1}{2}(|T\cap T'| + \fourl) < \diam(T\cap T')$.  This situation is illustrated in Figure \ref{fig:newwalls}. 
			
			Let $T$ be older than $T'$. 
			For each 2-cell $C_i$ in $T'$, let $\alpha_i^\pm = C_i \cap \alpha^\pm$.  For any path $\beta$, let $s_\beta$ be the 
			symmetry of $\beta$ that swaps its endpoints. When it is clear which path is being altered, we will write $s_\beta$ as $s$.  Define a tile-wall structure on $S = T\cup T'$ 
			generated by the following rule: 
			For any 2-cell $C_i \in T'$ adjacent to $T\cap T'$ and for any edge midpoint $x \in \alpha_i^\pm,$ replace the edge connecting $x$ to $x'$ with one connecting $s_{\alpha_i^\pm}(x)$ to $x'$. Then, concatenate adjacent tile-walls as in Step 1(a).			
		\end{enumerate}
		
		\item[Step 2]: (Small Intersections) We have two tiles $T, S$ so that $|S\cap T|< \fourl$. As in Step 1(a), the tile-walls of $S\cup T$ are generated by connecting walls in $S$ to walls in $T$ along identified edges.
		
		\item[Step 3]: (Large Intersections) We have two tiles, $R$ and $R'$ where $R'$ contains the tile $S \cup T$ from the most recent interation of Step 2. We do not adjust walls.
	\end{enumerate}
\end{construction}

\begin{figure}
	\centering
	\includegraphics[width=.4\textwidth]{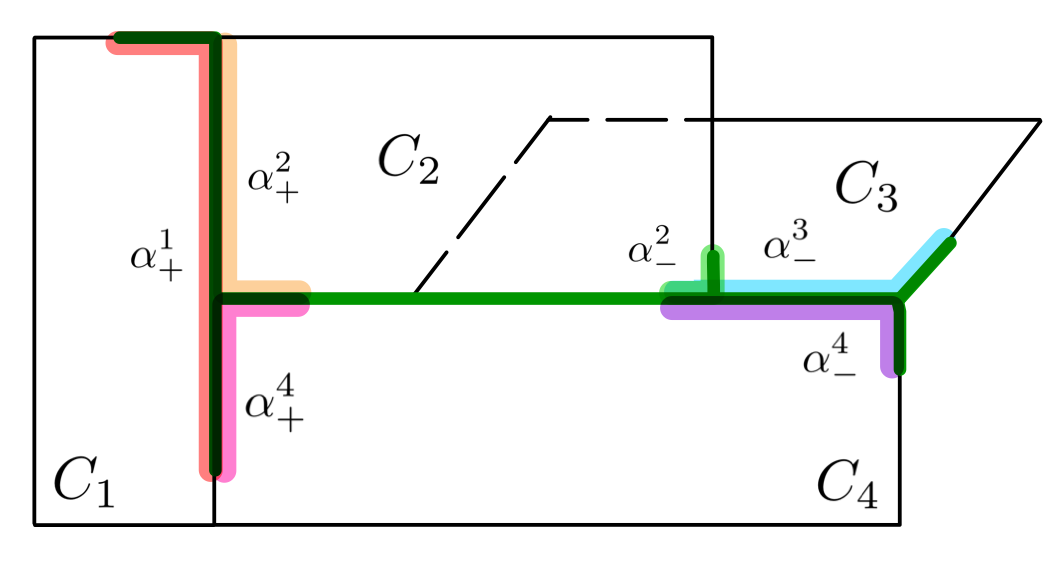}
	\includegraphics[width=.4\textwidth]{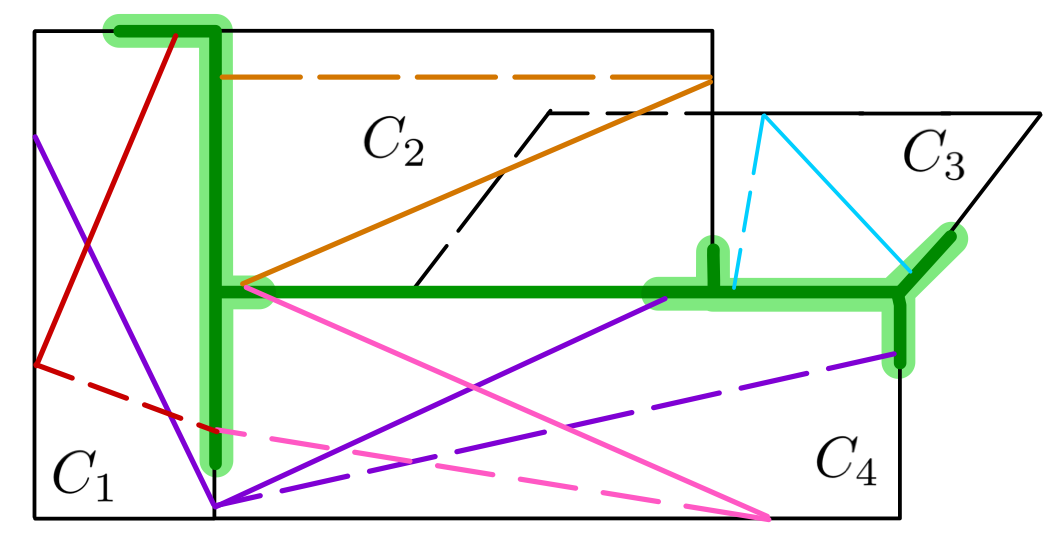}
	\caption{The left image shows a non-planar tile $T'$ and a potential long tree $T \cap T'$, along with each $\alpha_i^\pm$ indicated for each $2$-cell. At right, the old and new tile-walls are illustrated. Each colored solid line indicates a tile-wall in $T\cup T'$, and the dashed lines of the same color indicates the corresponding wall-path in $T'$.}
	\label{fig:newwalls}
\end{figure}

As in Example \ref{ex:balancing2tile}, to prove that the resulting tile-walls are balanced, we will check several cases. The following lemma enumerates the potential cases.

\begin{lemma}\label{lemma:wallcases}
	If $T, T'$ are balanced tiles which share no 2-cells and $T \cup T'$ is a potile, then the immersed graphs as constructed in Construction \ref{wallconstruction} give a tile-wall structure. 
	Furthermore, there are 7 ways that a wall path $\gamma$ of $T\cup T'$ can lie in $T \cup T'$, as listed here:
	\begin{enumerate}
		\item $\gamma$ lies entirely in one of $T$ or $T'$ and does not intersect $T \cap T'$, or
		\item $\gamma$ lies entirely in one of $T$ or $T'$ and has a single endpoint in $T\cap T'$ and
		\begin{enumerate}
			\item one of the endpoints of $\gamma$ lies in $\alpha^\pm$ or
			\item neither endpoint of $\gamma$ lies in $\alpha^\pm$, or
		\end{enumerate}
		\item $\gamma$ lies entirely in one of $T$ or $T'$ and $y = \gamma \cap (T\cap T')$ is a single interior vertex of $\gamma$ and
		\begin{enumerate}
			\item the interior vertex $y$ lies in $\alpha^\pm$ or
			\item no vertex of $\gamma$ lies in $\alpha^\pm$, or
		\end{enumerate}
		\item $\gamma$ does not lie entirely in one of $T$ or $T'$, and $ y = \gamma \cap (T\cap T')$ is a single interior vertex of $\gamma$ and
		\begin{enumerate}
			\item the interior vertex $y$ lies in $\alpha^\pm$ or
			\item no vertex of $\gamma$ lies in $\alpha^\pm$.
		\end{enumerate}
	\end{enumerate}
\end{lemma}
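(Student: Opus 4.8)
The statement has two parts: first, that Construction \ref{wallconstruction} really produces a tile-wall structure on $T\cup T'$ (i.e. each $2$-cell contains at most two vertices of each wall), and second, that every wall path $\gamma$ of $T\cup T'$ falls into one of the seven listed cases. I will handle these in order.

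\textbf{Step 1: the immersed graph is a tile-wall.} The only place where a $2$-cell could accumulate more than two vertices is when wall paths from $T$ and wall paths from $T'$ are concatenated along identified edges of $T\cap T'$. I would argue that each $2$-cell $C$ of $T\cup T'$ lies either in $T$ or in $T'$ (they share no $2$-cells), so $C$ already carries at most two vertices from the tile-wall structure it inherits; the re-routing in Step 1(b) only \emph{moves} a vertex $x\in\alpha_i^\pm$ to $s_{\alpha_i^\pm}(x)$, which still lies in $C_i$ (since $\alpha_i^\pm=C_i\cap\alpha^\pm$), so the count in each $2$-cell is unchanged. Concatenation merges the endpoint of a wall path in $C\subset T$ with the endpoint of a wall path in $C'\subset T'$ that share an edge midpoint $y\in T\cap T'$; since $y$ is a \emph{single} midpoint lying on the boundary of both $C$ and $C'$, it contributes only one vertex to each, so no $2$-cell ever exceeds two vertices. (Here one uses Lemma \ref{lemma:trees} to know $T\cap T'$ is a tree of size $\le\ell/2$, hence an embedded path-like object along which gluing is unambiguous.)

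\textbf{Step 2: enumerating how $\gamma$ sits in $T\cup T'$.} A wall path $\gamma$ is a path in the graph $\Gamma_{T\cup T'}$; each of its edges lies in a single $2$-cell, hence in $T$ or in $T'$. Consider the vertices of $\gamma$ that lie in $T\cap T'$: such a vertex is either an endpoint of $\gamma$ or an interior vertex. The first dichotomy is whether $\gamma$ is contained entirely in $T$ or entirely in $T'$, versus not. If $\gamma\subset T$ (say), then by Lemma \ref{MP4.4}/\ref{lem:balancemotivation} applied to the shard of $\gamma$ and $T'$, at most one endpoint of $\gamma$ lies in $T\cap T'$; moreover $\gamma$ can meet $T\cap T'$ in at most one \emph{interior} vertex, because two interior vertices in $T\cap T'$ would force a subpath of $\gamma$ with both endpoints on the tree $T\cap T'$ of length $<\ell$, and such a subpath would be a wall path whose shard meets $T\cap T'$ in a way contradicting balance (or, more elementarily, Corollary \ref{cor:geodesics} rules out the resulting short embedded loop when combined with antipodality). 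This gives cases (1), (2), (3). If $\gamma$ is not contained in $T$ or $T'$, then it must cross from $T$-cells to $T'$-cells, and every such crossing happens at a vertex in $T\cap T'$; a counting/parity argument as in the $2$-tile example (Example \ref{ex:balancing2tile}) shows there is exactly one such crossing vertex $y$, and it is interior, giving case (4). Finally, within cases (2), (3), (4) the sub-dichotomy (a)/(b) is simply whether the relevant endpoint or interior vertex $y$ (which lies in $T\cap T'$ and hence potentially in $\alpha^\pm$) lands in one of the special regions $\alpha^\pm$ or not — this is a tautological split once $\alpha^\pm$ is defined via the long-tree lemma, and it is well-posed because that lemma shows $\alpha^\pm$ does not depend on the choice of diameter.

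\textbf{Main obstacle.} The genuinely substantive point is showing $\gamma$ meets $T\cap T'$ in at most one vertex (when $\gamma\subset T$ or $T'$) and crosses between $T$ and $T'$ at most once (otherwise). A priori a wall path could weave in and out of the tree $T\cap T'$ several times. I expect to rule this out using the balance of $T$ and $T'$ together with Lemma \ref{MP4.5} — each passage through $T\cap T'$ would separate $\gamma$ into balanced subpaths whose combined length would exceed the diameter bounds available, or would produce an embedded closed path of length $<\ell$ contradicting Corollary \ref{cor:geodesics}. Pinning down this finiteness cleanly (as opposed to the later balance estimates, which are routine case-checks against $\Bal$) is the crux; the rest of Lemma \ref{lemma:wallcases} is organizational.
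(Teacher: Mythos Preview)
Your approach is in the right spirit but you are working much harder than the paper does, and in one place your argument has a gap. The paper's entire proof is two sentences: by induction it suffices to show each tile-wall has at most one edge in any $2$-cell, and this follows from Lemma~\ref{MP4.4}; the seven cases are then declared clear.

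The observation you make in Step~2---that a subpath of $\gamma$ with two vertices on $T\cap T'$ would be a balanced wall-path with both endpoints in the other tile, contradicting Lemma~\ref{MP4.4}---is the whole argument. That single observation simultaneously rules out two endpoints on $T\cap T'$, two interior vertices on $T\cap T'$, an endpoint together with an interior vertex, and more than one crossing between $T$ and $T'$. Once $\gamma$ is known to meet $T\cap T'$ in at most one vertex, the seven cases are pure bookkeeping (endpoint vs.\ interior, in $\alpha^\pm$ or not). So your ``main obstacle'' is not an obstacle at all: it is exactly the Lemma~\ref{MP4.4} argument you already gave, applied to the relevant subpath. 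There is no need for Lemma~\ref{MP4.5} or diameter bounds here, and your parenthetical about Corollary~\ref{cor:geodesics} producing a short embedded loop is off---no cycle in $X^{(1)}$ is created by two wall-vertices landing on the tree $T\cap T'$.

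Your Step~1, on the other hand, does not do what you claim. Observing that re-routing keeps each moved vertex inside its own $2$-cell, and that concatenation glues at a single midpoint, shows only that the \emph{total} set of wall-vertices in each $2$-cell is unchanged. What you need is that each \emph{connected component} still has at most two vertices (one edge) per $2$-cell. Concatenation could in principle join two distinct tile-walls of $T$, each passing once through some $C\subset T$, into a single wall of $T\cup T'$ passing through $C$ twice. What forbids this is again Lemma~\ref{MP4.4}: the connecting subpath through the other tile would be a balanced wall-path with both endpoints on $T\cap T'$. In other words, the tile-wall assertion and the case enumeration are the same fact, and the paper proves both at once by citing Lemma~\ref{MP4.4}.
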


Some of the possible situations in this lemma are illustrated in Figure \ref{fig:wallcases}.

\begin{figure}
	\centering
	\includegraphics[width=.4\textwidth]{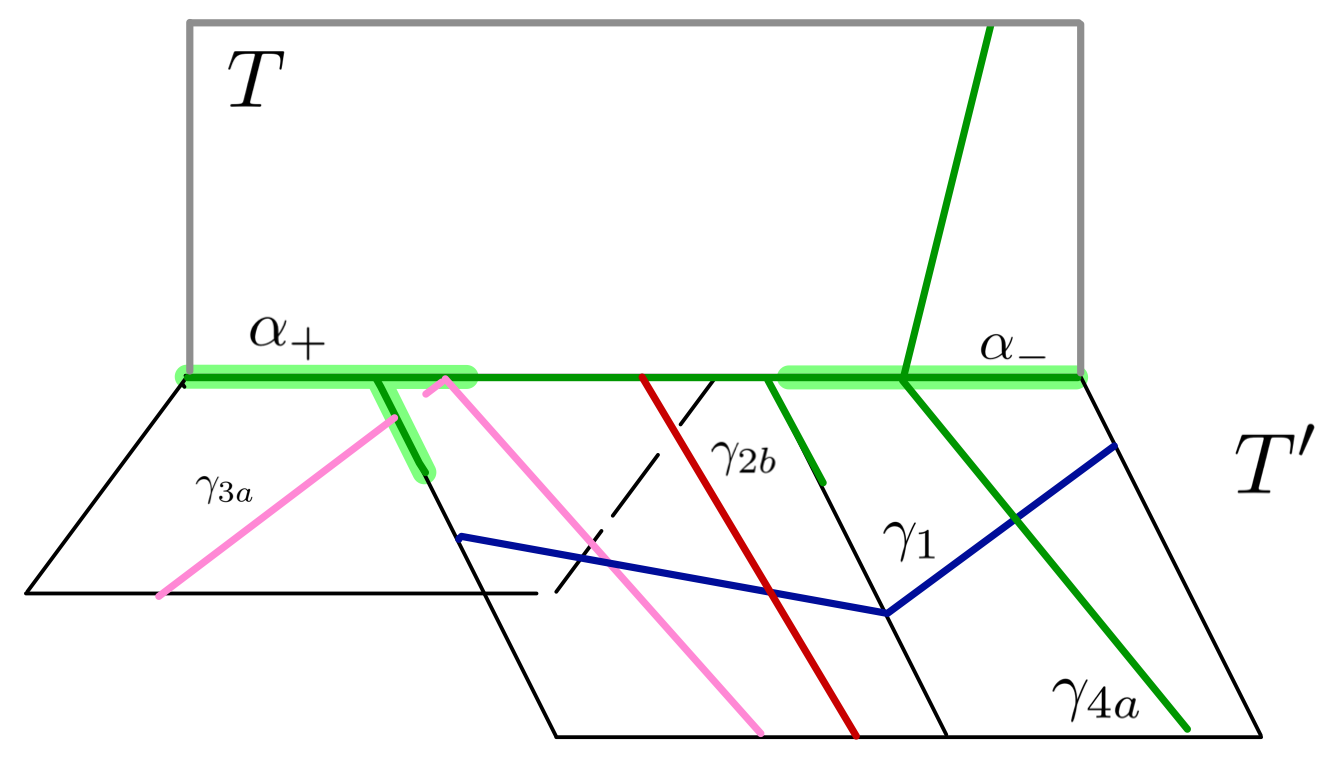}
	\caption{In tile $T'$, the paths $\alpha_\pm$ are highlighted in light green. The three 2-cells of $T'$ are labelled. The tile-walls in $T\cup T$ given by $\gamma_1, \gamma_{2b}, \gamma_{3a},$ and $\gamma_4$ illustrate the corresponding cases in Lemma \ref{lemma:wallcases}.}	
	\label{fig:wallcases}
\end{figure}

\begin{proof} By induction, since the antipodal relationship gives a tile-wall structure on $\T^0$ it suffices to guarantee that each tile-wall has at most one edge in any 2-cell. This follows from Lemma \ref{MP4.4}. The seven cases are clear.
\end{proof}

\begin{thm}\label{thm:balancingwalls}
	For tiles $T, T' \in \T^i$, if $T'$ is younger than $T$ and $|T'|\leq 3$, then the tile walls constructed in Construction \ref{wallconstruction} are balanced.
\end{thm}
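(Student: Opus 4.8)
The plan is to prove the stronger statement by induction on the stages of Construction~\ref{wallconstruction}: \emph{every} tile produced up through a given stage of Construction~\ref{tileconstruction} is balanced, i.e.\ for each wall path $\gamma$ in such a tile $T$, with endpoints $x,x'$, one has $|x,x'|_T \ge \Bal(\sh_T(\gamma))$. The base case is $\T^0$, where antipodal midpoints in a single $2$-cell $T$ satisfy $|x,x'|_T = \ell/2 = \Bal(T)$. For the inductive step, let the tile formed at the current step be $T\cup T'$ with $T$ older than $T'$, assume every earlier-formed tile is balanced, and split into cases according to which step of Construction~\ref{tileconstruction} produced the tile and, within Step~1, according to whether $T\cap T'$ is a round or a long tree. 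Throughout, the shard definition reduces the target inequality either to $|x,x'|_{T\cup T'}\ge\Bal(T\cup T')$ or to a bound inherited verbatim from the inductive hypothesis on the younger tile.

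First I would dispatch the three ``soft'' situations. If $T\cap T'$ is a round tree (Step~1(a)), Lemma~\ref{lemma:roundtrees} is exactly the required statement, both for wall paths lying inside $T$ or $T'$ and for their concatenations, so nothing more is needed. If the tile is produced in Step~2 then $|S\cap S'|<\fourl$ and, by Lemma~\ref{lemma:trees}, $S\cap S'$ lies in the $\fourl$-neighbourhood of the shared midpoint: concatenated wall paths are handled by Lemma~\ref{MP4.8}, while for a wall path inside $S$ (or $S'$) the shard definition is arranged precisely so that the bound inherited from $\gamma$-balance of $S$ suffices — the two sub-cases $\Bal(S)<\Bal(S\cup S')$ and $\Bal(S)\ge\Bal(S\cup S')$ matching the two clauses of the Step~2 part of the shard definition. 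In Step~3 walls are not adjusted and $\sh_{R\cup R'}(\gamma)=\sh_{R'}(\gamma)$ for $\gamma\subset R'$, so balance of wall paths inside $R'$ is inherited, whereas a wall path meeting $R$ has shard $R\cup R'$ and is treated as in the round-tree case via Lemmas~\ref{MP4.5} and~\ref{MP4.6}; verifying that the Step~3 gluing does not create a bent wall — using the maximality conditions governing Steps~2 and~3 — is the one delicate point in this group.

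The substantive case is Step~1 with $T\cap T'$ a \emph{long} tree, where the cellwise unbending of Construction~\ref{wallconstruction}(1)(b) is in force and $\sh_{T\cup T'}(\gamma)=T\cup T'$ for all $\gamma$ (up to the younger-tile clause). Lemma~\ref{lemma:wallcases} already supplies the seven configurations of a wall path $\gamma$ and certifies, via Lemma~\ref{MP4.4}, that the immersed graph is a genuine tile-wall structure; the work is to verify $|x,x'|_{T\cup T'}\ge\Bal(T\cup T')$ in each configuration. When $\gamma$ avoids the regions $\alpha_\pm$ — cases (1), (2)(b), (3)(b), (4)(b) — I would project $x,x'$ to $T\cap T'$ and run the estimate of Lemma~\ref{lemma:roundtrees}, using that a vertex of $\gamma$ outside $\alpha_\pm$ is within $\fourl$ of an endpoint of any diameter, and invoking Sublemma~\ref{MPsublemma} with $q=\fourl$. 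When $\gamma$ meets some $\alpha_\pm$ — cases (2)(a), (3)(a), (4)(a) — I would replace the relevant sub-path of $\gamma$ by its image under the appropriate $s_{\alpha_i^\pm}$, apply Sublemma~\ref{MPsublemma} to the pair $(T\cap T',\alpha_\pm)$, and combine with the $\gamma$-balance of $T$ and $T'$; since unbending along $\alpha_i^\pm$ displaces an endpoint by at most $|T\cap T'|-\fourl$, the resulting bound lands exactly on $\Bal(T\cup T')$, just as in the $2$-cell computation of Example~\ref{ex:balancing2tile}.

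The main obstacle — and the reason the hypothesis $|T'|\le 3$ is needed — is cases (3)(a) and (4)(a) when $\gamma$ threads through more than one $2$-cell of $T'$: the symmetries $s_{\alpha_i^\pm}$ are defined cell-by-cell rather than globally on $\alpha_\pm$, so estimating $|x,x'|_{T\cup T'}$ requires knowing the internal shard structure of $T'$ and how a wall path traverses it, and then checking that the cellwise unbendings compose correctly. With $|T'|\le 3$ this internal structure is completely explicit: Remark~\ref{rem:subtileage} forces $T'=D'\cup C'$ with $D'$ a $2$-subtile, Lemma~\ref{smallintersections} controls its sub-intersections, and Lemma~\ref{lemma:balanceandD'} ties $\Bal(T\cup T')$, $|T\cap T'|$ and $\Can(D')$ together, so the inequality $|x,x'|_{T\cup T'}\ge\Bal(\sh_{T\cup T'}(\gamma))$ can be checked by hand, with Lemma~\ref{lemma:balanceandD'} absorbing the extra cancellation contributed by $D'$. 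I expect the bulk of the effort to be the bookkeeping in these two sub-cases — tracking which $\alpha_i^\pm$ a given $\gamma$ crosses, and whether its interior vertex $y$ has the form $s_{\alpha_i^\pm}(y')$ — with the inequalities themselves routine manipulations of the definitions of $\Bal$ and $\Can$ and of Sublemma~\ref{MPsublemma}.
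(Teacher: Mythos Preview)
Your plan matches the paper's approach closely: induction on construction stages, the seven-case split of Lemma~\ref{lemma:wallcases} in Step~1(b), the identification of Cases~3(a) and~4(a) as the places where the hypothesis $|T'|\le 3$ is genuinely used, and the intent to deploy Remark~\ref{rem:subtileage}, Lemma~\ref{smallintersections}, and Lemma~\ref{lemma:balanceandD'} there. One minor simplification: for Cases~1,~2(b),~3(b) in Step~1(b) no projection argument is needed --- those wall paths are literally unchanged by the unbending, so balance in $T$ or $T'$ together with Lemma~\ref{MP4.5} and $|T\cap T'|\ge\fourl$ gives the result immediately.

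There is one genuine gap, in your treatment of Step~3. You propose to handle a wall path $\gamma$ crossing $R\cap R'$ ``as in the round-tree case via Lemmas~\ref{MP4.5} and~\ref{MP4.6}'', but neither applies: $R\cap R'$ need not be a round tree, and Lemma~\ref{MP4.6} requires the reflected hypothesis $s_\alpha(x)=y$, which fails precisely because walls are \emph{not} adjusted in Step~3. The paper's argument is instead a direct shard computation. One restricts $\gamma$ to $\gamma'\subset R'$ with endpoints $x',y$ and observes that $|x',y|\ge\Bal(S)$ for any shard $S\subset R'$ traversed by $\gamma'$. In Case~4(a) this yields $|x,x'|\ge\Bal(S)+\Bal(R)-(|R\cap R'|-\fourl)\ge\tfrac{3\ell}{4}-|R\cap R'|$, and one checks separately that $\Bal(R\cup R')\le\tfrac{3\ell}{4}-|R\cap R'|$. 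In Case~4(b) the chain runs through $S$, $S'$, and $R$, and the key input is the Step~2 maximality $|S\cap S'|\ge|S\cap R|$. This is exactly the ``delicate point'' you flag, but the mechanism is a direct estimate on shards, not an invocation of Lemma~\ref{MP4.6}.
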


\begin{proof}
	We will prove this by considering each Step in Construction \ref{tileconstruction} and each Case in Lemma \ref{lemma:wallcases}.
	In Steps 1(a) and 1(b), the shard associated to any wall-path $\gamma$ in $T\cup T'$ is $T\cup T'$.

	\textbf{Step 1(a)}.  Suppose that $T \cap T'$ is a round tree. Let $\gamma$ a wall path in $T\cup T'$.  By Lemma \ref{lemma:roundtrees}, $\gamma$ is balanced in $T\cup T'$.

	\textbf{Step 1(b)}. Now suppose that $T\cap T'$ is a long tree. Let $D$ be any diameter of $|T\cap T'|$, and let $u_\pm$ the endpoints of $D$. By Lemma \ref{lem: alpha}, we can define $\alpha_\pm = N^c_\fourl(u_\mp).$   We will check each case in Lemma \ref{lemma:wallcases}.

	\noindent\textsc{Cases} 1, 2(b), 3(b). Let $\gamma$ be a wall-path in $T\cup T'$, which lies in a single tile. In these cases, $\gamma$ is the same as a wall-path in $T$ or $T'$. This is also true in Cases 2(a), 3(a) if we assume $\gamma$ lies entirely in $T'$. In any of these cases, since $\gamma$ is not adjusted we get the desired result by Lemma \ref{MP4.5}.

	\noindent\textsc{Case} 2(a). Now suppose that $\gamma$ is a wall-path in $T'$ with endpoints $x$ and $x'$, where $x$ is the endpoint of $\gamma$ which lies in $\alpha^\pm$. This case is illustrated in Figure \ref{fig:wallcases1-2a}.
	
	\begin{figure}
		\centering
		\includegraphics[width = .4\textwidth]{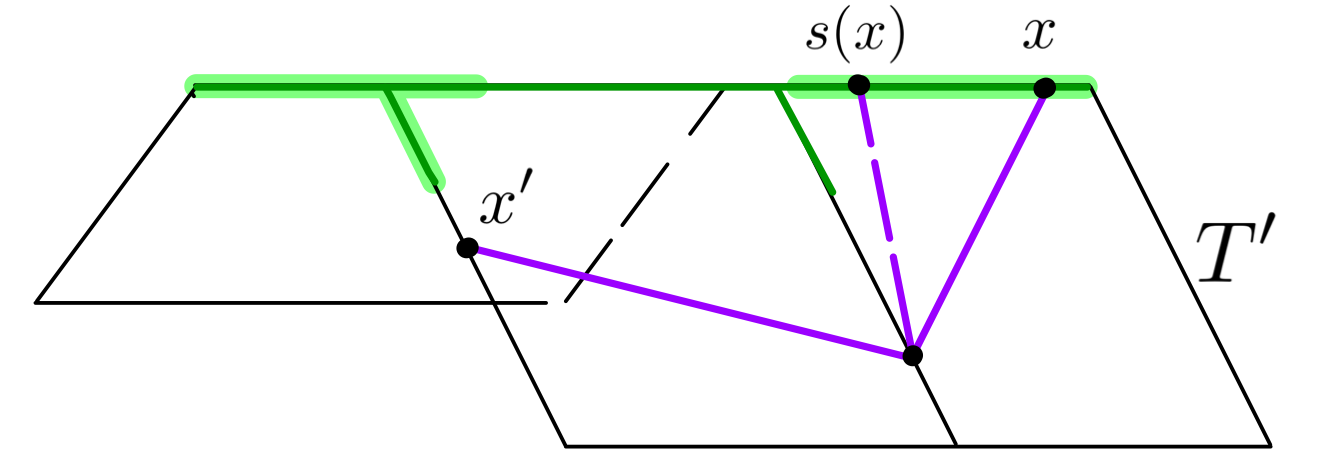}
		\caption{This figure illustrates Step 1(b), Case 2(a) in Theorem \ref{thm:balancingwalls}. The solid purple line indicates a tile-wall in $T'$, and the dotted purple line shows part of the corresponding tile-wall in $T\cup T'$.}
		\label{fig:wallcases1-2a}
	\end{figure}

	Then the neighborhood of $x$ in $\gamma$ lies in some $2$-cell $C_i$. Let $s(x) = s_{\alpha_i^\pm}(x)$, so that $x'$ and $s(x)$ were the endpoints of the wall-path in $T'$ which gave rise to $\gamma$. By Lemma \ref{MP4.5}, 
	$|x', s(x)|_{T\cup T'} \geq \Bal(T\cup T') + |T\cap T'| - \fourl.$
	Notice that $|s(x), x|_S \leq |\alpha_i^\pm| \leq |T\cap T'| - \fourl$, 
	since $\alpha_i^\pm$ is 
	contained in $\alpha^\pm$, which is the complement of the $\fourl$-neighborhood 
	of a point. Therefore
	\begin{align*} 
	|x, x'|_{S} &\geq |x', s(x)|_S - |s(x), x|_S \\
	&\geq \left(\Bal(T\cup T') + |T\cap T'| - \fourl\right) - \left(|T\cap T'| - \fourl\right) \\
	&= \Bal(T\cup T').
	\end{align*}
	This concludes the proof for Case 2(a).			.
	
	\noindent\textsc{Case} 3(a). 
	Suppose now that $\gamma$ is a wall-path in $T\cup T'$ which lies entirely in $T'$, with endpoints $x, x'$ and a midpoint $y \in \alpha^\pm$. \textit{A priori}, it seems that there may be many situations in which this case arises. However, we will show that there are only two types of tile (illustrated in Figure \ref{fig:wallcases1-3a}) which can give rise to this case, and then show that in each, $\gamma$ is balanced.
	
	Note first that $y$ must be adjacent to at least two 2-cells, $C_1$ and $C_2$ which are traversed by $\gamma$. By Construction \ref{wallconstruction}, $x \in C_1, x'\in C_2$ were connected by wall-paths in $T'$ to points $z = s_1(y), z' = s_2(y)$, respectively. If $z = z'$, then $|x, x'| \geq \Bal (T') \geq \Bal(T\cup T')$, by Lemma \ref{MP4.5}. Assume that this is not the case.
	
	Notice that $u_-$ is not in $C_1$, since this would imply $|C_1 \cap T|> \fourl$, which contradicts Lemma \ref{smallintersections}. Similarly, $u_- \notin C_2$. Therefore there must be a distinct $2$-cell $C_3$ in $T'$. In particular, $|T'| = 3$. 
	
	Since $T'$ is the union of two smaller tiles and $|T'|\leq 3$, it must be the case that $T' = S \cup S'$, where $|S| = 2$ and $|S'| = 1$. If $C_3 \in S$, then $T\cap S$ would contain a path from $u_-$ to $\alpha_+$, which must have length at least $\fourl$. This contradicts the maximality of the construction. So $S = C_1 \cup C_2$, and $S' = C_3$. Furthermore $\Can(S) \geq \fourl$ and $|S \cap C_3| \geq \fourl$.
	
	By the maximality condition of Construction \ref{tileconstruction}, $|T\cap S|<\fourl$, so at most one endpoint of $C_1 \cap C_2$ lies in $T\cap T'$. We will call this endpoint $a$, and the other $b$. Since $s_1(y) \neq s_2(y)$, there must be a non-trivial path in $T\cap (C_1  - C_2)$, without loss of generality, and furthermore this path must have $a$ as one of its endpoints. Notice that there may also be a non-trivial path in $(C_2 -C_1) \cap T$, and if so it also has $a$ as an endpoint.
	
	Let $v_+$ be the point on the path between $x$ and $u_-$ which is $\fourl$ away from $u_-$. There are two situations, both illustrated in Figure \ref{fig:wallcases1-3a}; either $v_+$ lies in $C_1 - C_2$ (without loss of generality), or $v_+$ lies in $C_1 \cap C_2$. 
	Both of these situations are illustrated in Figure \ref{fig:wallcases1-3a}. Notice that $v_+ \notin C_3$, since this would imply that $|C_3 \cap T|\geq \fourl$.  In the first situation, $C_3 \cap (C_1\cap C_2) \neq \emptyset,$ and in particular it must be a path in either $C_1$ or $C_2$ of length at least $\fourl$ which does not contain $v_+$, and might  contain $b$. In the second situation, $T'$ is planar, and furthermore, since $|C_3 \cap S| \geq \fourl$ and $v_+ \notin C_3$, $C_3 \cap C_2 = \emptyset$. In either situation, we will prove that there are no points $x, x'$ as given with $|x, x'| \leq \Bal(T\cup T')$.

	\begin{figure}
		\centering
		
		\includegraphics[width = .4\textwidth]{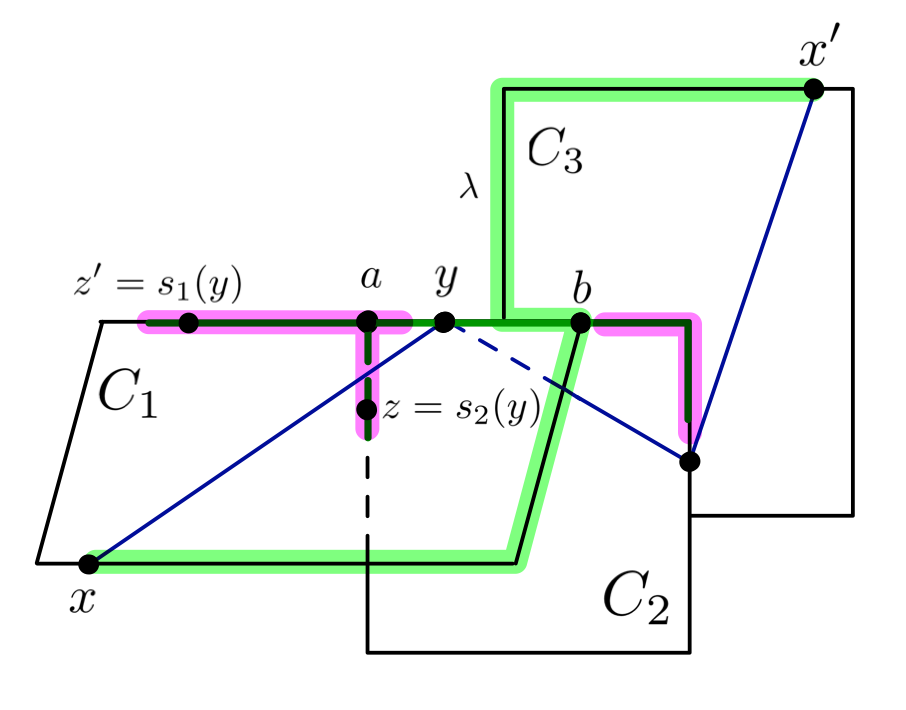}
		\includegraphics[width = .4\textwidth]{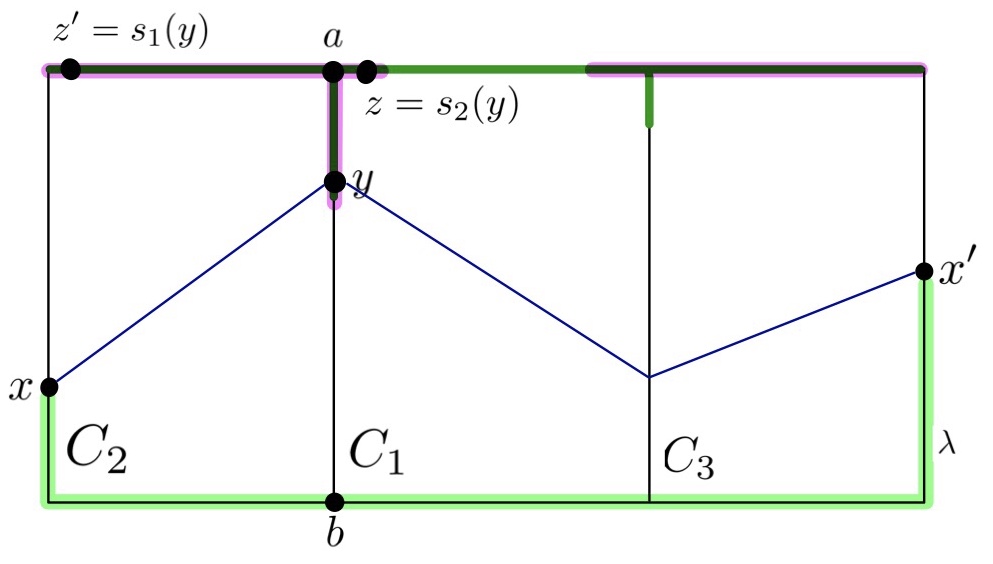}
		\caption{These images illustrate the two possible situations in which a tile-wall in Step 1(b), Case 3(a) could produce an unbalanced wall after adjusting according to Construction \ref{wallconstruction}. In both illustrations, a possible geodesic connecting $x$ to $x'$ is highlighted in bright green.}
		\label{fig:wallcases1-3a}
	\end{figure}

	Now we will show that in either case, the tile-wall $\gamma$ is balanced. Let $\lambda$ be a geodesic path in the edges of $T$ connecting $x$ to $x'$. Notice that $\lambda$ must pass through $C_1\cap C_2$. 
	
	If $z$ is on the path $\lambda$, then $|\lambda| > |x, z| > \Bal(T\cup T')$ by Lemma \ref{MP4.5}. So we may assume that $z \notin \lambda$, and similarly that $z' \notin \lambda$.
	
	\begin{claim} If $\lambda$ contains $a$, then $|x, x'| \geq \Bal(T\cup T')$. 
	\end{claim}	
	\begin{proof}	Consider the subpath of $\lambda$ connecting $x$ to $a$. We have
		\begin{align*}
		|\lambda|\geq|x, a| &\geq  (|x, a| + |a, z|) - |z, a|  \\
		&\geq |x, z| - |z, a|\\
		&> \Bal(T') - |z, a| \\
		&>\Bal(T\cup T') + |T\cap T'| - \fourl + |z, a| \\
		\end{align*} 
	Since $z, a \in \alpha_+$, the path $|z, a| \leq |\alpha_+| \leq |T\cap T'| - \fourl$, so $|\lambda| \geq \Bal(T\cup T')$. 
	\end{proof}
	
	Consider the case that $\lambda$ contains $b$ but not $a$. 
	
	\begin{claim} If $|x, z| \neq \frac{\ell}{2}$, then $|x, b| \geq \frac{\ell}{2} - |C_1 \cap C_2|$, and similarly for $x'$ and $z'$. 
	\end{claim}
	\begin{proof}
		If $|x, z| \neq \frac{\ell}{2}$, then  the wall from $x$ to $z$ must have been altered at some earlier point in the construction. In particular, $z \in C_1 \cap C_2$ and $x$ is antipodal to some point $x''$ in $C_1 \cap C_2$, so $|x, b| + |b, a| > \frac{\ell}{2}$. Therefore
		\begin{align*}
		|x, b| &> \frac{\ell}{2}- (|a, b|) \\
		&\geq \frac{\ell}{2} - |C_1 \cap C_2| 
		\end{align*}
	\end{proof}
	
	\begin{claim}		If $|x, z| = \frac{\ell}{2}$, then $|x, b| \geq \frac{\ell}{2} - |C_1 \cap C_2| - |z, b|$, and similarly for $x'$, $z'$. 
	\end{claim}
	\begin{proof}
		First, suppose that $z \in C_1 \cap C_2$. Then $|x, b| = \frac{\ell}{2} - |z, b|$, where $z$ lies in the $\fourl$-neighborhood of $b$, so $|x, b| \geq \frac{\ell}{2}-\fourl \geq  \frac{\ell}{2} - |C_1 \cap C_2|$.
		
		On the other hand, if $z \notin C_1 \cap C_2$, then $|x, b| = |x, z| - (|z, a| + |a, b|) = \frac{\ell}{2} - |C_1 \cap C_2| - |z, a|$. 
	\end{proof}
	
	To finish showing that $\gamma$ must be balanced, recall that by Lemma \ref{lemma:balanceandD'},
	\begin{align*}
	\Bal(T\cup T') 
	&\leq \frac{5\ell}{4} - 2|C_1 \cap C_2| - |T\cap T'| \\
	&= \frac{5\ell}{4} - 2|C_1 \cap C_2| - (|\alpha_+|+ |v_+, u_-|) \\
	&= \ell - 2|C_1 \cap C_2| - |\alpha_+| \\
	&= 2(\frac{\ell}{2} - |C_1 \cap C_2|) - |\alpha_+|.
	\end{align*}	
	
	The paths connecting $z$ and $z'$ to $b$ are subpaths of $\lambda$ and they share only an endpoint, namely $b$. Therefore $|z, b| + |z', b|  = |z, z'| \leq |\alpha_+|$.
	
	This concludes the proof that $\gamma$ is balanced in Case 3(a).

	\noindent\textsc{Case} 4(a). 
	Suppose now that $\gamma$ has endpoints $x \in T - T'$, $x' \in T' - T$, and a midpoint $y  \in (T\cap T')$. If $y \in \alpha^\pm$, then for the sake of notation, let $y \in \alpha_1^+$, where $\gamma$ traverses the 2-cell $C_1 \in T$. 
	
	By Lemma \ref{MP4.6}, if $T\cap T' \subset N_{\ell/4}(C_1 \cap T)$, then this wall is balanced. Assume that this is not the case. Let $v_+$ denote the point on the path between $x$ and $u_-$ which is $\fourl$ away from $u_-$. Then there is some other 2-cell in $T'$ containing $v_+$; call this 2-cell $C_2$. Neither $C_1$ nor $C_2$ can contain $u_-$, since their intersection with $T$ would then be $\geq \fourl$. So there must be some other 2-cell $C_3$ which contains $u_-$. Therefore $|T'| = 3$, so $T' = S \cup S'$ where $S$ is not a 1-potile. Notice that $(C_2 \cup C_3)\cap T$ contains $u_-$ and $v_+$, so $|(C_2 \cup C_3)\cap T| \geq \fourl$. Therefore by the maximality condition of Construction \ref{tileconstruction}, $C_2, C_3$ are not both in $S$. For the same reason, we know $C_1, C_3$ are not both in $S$. So, without loss of generality, we can say $C_1 \cup C_2 = S$ and $C_3 = S'$, $|C_1 \cap C_2| \geq \fourl$. By maximality, $|S\cap T| < \fourl$, so at most one endpoint of $C_1 \cap C_2$ lies in $T$. Since $v_+ \in C_2$ and $C_3 \cap S$ is a path of length at least $\fourl$ which does not contain $v_+$, $C_3 \cap C_1 = \emptyset$ and $T'$ is planar.
	
	Let $\xi$ in 
	$(T\cup T')^{(1)}$ be a geodesic joining $x$ to $x'$.  
	Since $|T\cap T'|<\frac{\ell}{2}$, it is a geodesic tree by Lemma \ref{cor:geodesics} and $\xi$ must enter and exit $T\cap T'$ at most once.  Let $z$ be the point where $\xi$ enters $T\cap T'$, and let $z'$ be the 
	point where $\xi$ exits $T\cap T'$.  Then 
	\begin{align*}
	|x,x'|_{S} &\geq |x,z|_{S} + |z',x'|_{S} \\
	&\geq |x,y|_T - |y,z|_{T\cap T'} - |x', s(y)|_{T'} - |s(y), z'|_{T\cap T'} \\
	&\geq \Bal(T) + \Bal(T') - |y,z|_{T\cap T'} 
	- |s(y), z'|_{T\cap T'} \\
	&= \Bal(T\cup T') + |T\cap T'| + \fourl -
	|y,z|_{T\cap T'} -|s(y), z'|_{T\cap T'} \\
	&\geq \Bal(T\cup T') + \fourl - |y, z|_{T\cap T'}.
	\end{align*}
	Thus it suffices to prove that $\fourl
	- |y,z|_{T\cap T'}  \geq 0$. 
	
	If $z \in C_1$, then $|y, z| \leq |C_1 \cap T|<\fourl $ and this is true. If $z$ is not in $C_1$, then $\gamma$ must have a sub-path which has one endpoint in $C_1 \cap T$ and the other in $C_1 \cap C_2$. But then there must be a point antipodal to $x$ which lies in $C_1 \cap C_2$, and $|C_1 \cap (T\cup C_2 \cup C_3)| > \frac{\ell}{2}$. But $C_2 \cup C_3$ is a potile and $C_2 \cup C_3$ contains both $v_+$ and $u_-$, so $P = T \cup C_2 \cup C_3$ is a potile. But $|P \cap C_1| > \frac{\ell}{2}$, which contradicts Lemma \ref{lemma:trees}.

	\noindent\textsc{Case} 4(b). If, instead, we have that $y$ does not lie in $\alpha_\pm$, then $u_\pm$ are both contained in the $\fourl$-neighborhood of $y$. Since the path connecting $u_-$ to $u_+$ is a diameter, all of $T\cap T'$ must be contained in the $\fourl$-neighborhood of $y$, and by Lemma \ref{MP4.8}, $|x, x'| \geq \Bal(T\cup T')$.

	\textbf{Step 2.} Suppose we have two tiles $T, S$ such that $T\cup S$ is a potile and $|T\cap S|<\fourl$. If a wall-path lies in both $T$ and $S$, then by Lemma \ref{MP4.8} the resulting (concatenated) tile-wall is balanced. Otherwise, the tile-wall is balanced (with respect to its shard) by the inductive assumption. 

	\textbf{Step 3.} Suppose now that $R, R'$ are two tiles with $|R\cap R'|\geq \fourl$, and $R'$ was made during Step 2 or 3. Let $\gamma$ be a tile-wall in $R\cup R'$. As in Step 1, we will analyze each case to show that $\gamma$ is balanced with respect to its shard.
	
	If $\gamma$ lies entirely in $R$ or $R'$, then the shard of $\gamma$ is either $R\cup R'$ or it is the same as it was in the previous step. In the latter case, $\gamma$ is balanced with respect to its shard by the inductive hypothesis. In the former, $\gamma$ is balanced with respect to $R \cup R'$ by Lemma \ref{MP4.5} and the fact that $|R\cap R'|> \fourl.$
	
	This covers Cases 1 - 3(b).
	
	Now suppose that $\gamma$ traverses 2-cells in both $R$ and $R'$, as in Case 4. Let $\gamma'$, with endpoints $x', y$, be the restriction of $\gamma$ to $R'$. If $S$ is a shard contained in $R'$ and $\gamma'$ traverses $S$, then $|x', y| \geq \Bal(S)$. Indeed, this is true if $\sh_{R'}(\gamma') = S$ by the inductive hypothesis. If, however, $\sh_{R'}(\gamma') \neq S$, then by the construction of shards, it must be the case that $\Bal (\sh_{R'}(\gamma') )>\Bal (S).$
	
	If, as in Case 4(a), $\gamma$ traverses $|R \cap R'|$ with a midpoint $y \in \alpha^\pm$, then we can see that 
		\begin{align*}
			|x, x'| &\geq \Bal(S) + \Bal(R) - (|R\cap R'| - \fourl) \\
				&\geq \fourl + \fourl + (|R\cap R'| - \fourl) \\
				&= \frac{3\ell}{4} - |R\cap R'|.
		\end{align*}
	
	On the other hand, 
	\begin{align*}
	\Bal(R\cup R') &= \fourl |R\cup R'| + \fourl - \Can(R\cup R') \\
	&= \fourl |R\cup R'| + \fourl - \Can(R) - \Can(R')  - |R\cap R'| \\
	&\leq \fourl|R\cup R'| + \fourl - (\fourl|R| - \fourl) - (\fourl|R'| - \fourl) - |R\cap R'| \\
	&= \frac{3\ell}{4} - |R\cap R'|\\
	&\leq |x, x'|.
	\end{align*}
	
	Finally, if, as in Case 4(b), $\gamma$ traverses $R\cap R'$ such that there is no midpoint $y \in \alpha^\pm$, then note that since $S$ was not glued to $R$ in Step 2, then $|S \cap S'| \geq |S \cap R|$. Therefore, we get:	

		\begin{align*}
			|x, x'| &\geq \Bal(S) + \Bal(R) - 2|S\cap R| \\
				&> \Bal(S) + \Bal(R) - |S\cap R| - |S\cap S'|\\
				&= \Bal(S\cup R) + \fourl - |S\cap S'| \\
				&= \Bal(S\cup R) + \frac{\ell}{2} - \fourl - |S\cap S'| \\
				&> \Bal(S \cup R) + \Bal(S') - \fourl - |S'\cap (S\cup R)| \\
				&= \Bal(R\cup R').
		\end{align*}
	\end{proof}

It should be noted here the constructions of tiles and walls require only that $d<1/4$.  Indeed, it is only the proof that the resulting walls are balanced that causes potential problems for tiles of size larger than 3, and even then only in Cases 3(a) and 4(a).

\section{Walls are Embedded Trees} \label{sec:trees}
By concatenating tile-walls across identified edges in the Cayley complex, we obtain a potential wallspace structure on the Cayley complex. A connected component, $\Gamma$, of the resulting immersed graph is a \emph{wall}.

We prove that the walls constructed in Construction \ref{wallconstruction} are embedded trees in two steps. In this section, we prove that for any fixed length $N$, no wall contains an embedded loop of length $\leq N$ (Theorem \ref{thm:weaknoreturns}). This is the main technical result of the rest of the paper. Essentially, we show that if there were a self-intersection, then two adjacent tiles would have to have a large enough overlap that their union is also a tile. In the following section, we show that these walls are quasi-geodesic, and use hyperbolicity to argue that this implies that the walls are embedded trees.

\begin{defn}
	A \emph{decomposition of length n} of a path $\gamma$ connecting edge midpoints $x, x'$ in wall $\Gamma$ is a concatenation of wall-paths $\gamma_1\cdots \gamma_n = \gamma$  and assignment of tiles $T_i$ such that for each $i$,  $\gamma_i \subset T_i \subset \sh_T(\gamma_i)$ for some tile $T \supset T_i$ 
	
	A decomposition is \emph{reduced} if for any pair of adjacent tiles $T_i, T_{i+1}$, their union $T_i \cup T_{i+1}$ is not a tile, and no tiles $T_i, T_j$ for $j > i+1$ share 2-cells.
\end{defn}

\begin{lemma}\label{lemma:minimaldecomposition}
	If a decomposition $\gamma_1 \cdots \gamma_n$ is of minimal length and $|T_i \cup T_{i+1}|\leq 5$, then $T_i\cup T_{i+1}$ is not a tile.  Furthermore, if $T_i \cap T_{i+1}$ contains 2-cells, then it is a union of potiles, as is $\overline{T_{i+1}-T_i}$. 
\end{lemma}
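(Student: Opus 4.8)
The plan is to show that a decomposition of minimal length is reduced — the first assertion being the piece of reducedness we must actually spell out here — and then to read off the ``furthermore'' directly from Proposition~\ref{prop:intersectionproperties}(3).

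\emph{Minimality forbids $T_i\cup T_{i+1}$ from being a tile.} Suppose toward a contradiction that $\gamma_1\cdots\gamma_n$ has minimal length among decompositions of $\gamma$ but that $T_i\cup T_{i+1}$ is a tile with $|T_i\cup T_{i+1}|\le 5$. Then $T_i\cup T_{i+1}$ is a potile carrying the tile-wall structure built in Construction~\ref{wallconstruction}, and since $\gamma_i,\gamma_{i+1}$ are consecutive pieces of the wall-path $\gamma$, their concatenation $\gamma_i\gamma_{i+1}$ is again a wall-path of the ambient wall. I would form a shorter decomposition of $\gamma$ by replacing the two pieces $\gamma_i,\gamma_{i+1}$ and their tiles $T_i,T_{i+1}$ by the single piece $\gamma_i\gamma_{i+1}$ with tile $\sh_{T_i\cup T_{i+1}}(\gamma_i\gamma_{i+1})$, witnessed by $T=T_i\cup T_{i+1}$; this is a legitimate decomposition element because $\gamma_i\gamma_{i+1}\subseteq\sh_{T_i\cup T_{i+1}}(\gamma_i\gamma_{i+1})\subseteq T_i\cup T_{i+1}$ by definition of the shard. (In the typical case this shard is all of $T_i\cup T_{i+1}$: it is, when $T_i\cup T_{i+1}$ was formed in Step~1 of Construction~\ref{tileconstruction}, and also when $\gamma_i\gamma_{i+1}$ traverses both summands of a Step~2 or Step~3 tile, which it does since $T_i\neq T_{i+1}$, equality being excluded.) The new decomposition has length $n-1$, contradicting minimality; this also shows that a minimal-length decomposition can be taken reduced.

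\emph{The intersection and difference are unions of potiles.} This is immediate from Proposition~\ref{prop:intersectionproperties}(3). The tiles $T_i$ and $T_{i+1}$ both lie in the terminal tile assignment $\T$, which is one of the collections $\T^j$ to which that proposition applies, so if $\overline{T_i\cap T_{i+1}}$ contains $2$-cells then, applying Proposition~\ref{prop:intersectionproperties}(3) with $T=T_{i+1}$ and $T'=T_i$, both $\overline{T_i\cap T_{i+1}}$ and $\overline{T_{i+1}-T_i}$ are unions of potiles — indeed of tiles of some collection.

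\emph{Main obstacle.} The step requiring care is the first assertion: one must make sure that collapsing consecutive tiles never forces a \emph{net} length increase once the decomposition must be kept reduced away from the index $i$ as well — the danger being that a collapse produces a pair $T_a,T_b$ with $b>a+1$ sharing $2$-cells and leaves no length-decreasing move available. I expect this to be handled by showing that any such block $T_a\cup\cdots\cup T_b$ is itself contained in a tile (using $|T_a\cup\cdots\cup T_b|\le 5$ together with the observation, recorded after the definition of a potile, that unions of overlapping potiles are potiles), so that collapsing the block is again length-decreasing. The second assertion, by contrast, is a one-line appeal to Proposition~\ref{prop:intersectionproperties}(3) and poses no difficulty.
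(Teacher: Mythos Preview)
Your argument is correct and matches the paper's: collapse $\gamma_i,\gamma_{i+1}$ into a single piece when $T_i\cup T_{i+1}$ is a tile to contradict minimality, then invoke Proposition~\ref{prop:intersectionproperties}(3) for the ``furthermore''. The paper is slightly terser, saying only that the shards associated to $\gamma_i,\gamma_{i+1}$ would contain $T_i\cup T_{i+1}$ so that the concatenation gives a shorter decomposition; your version, assigning the shard of the concatenated path in $T_i\cup T_{i+1}$ as the new tile, is a cleaner way to verify the decomposition condition.

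Your ``main obstacle'' paragraph is unnecessary and stems from over-reading the hypothesis. In the lemma, ``minimal length'' means minimal among \emph{all} decompositions of $\gamma$, not among reduced ones: the collapsed decomposition of length $n-1$ need not be reduced, it merely needs to exist. Indeed, the conclusion ``$T_i\cup T_{i+1}$ is not a tile'' is literally part of the \emph{definition} of reduced, so if minimality were taken relative to reduced decompositions the first assertion would be vacuous. The paper's proof does not use the reducedness hypothesis at all for the first assertion, and neither should you; drop the final paragraph.
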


\begin{proof}
	If $T_i\cup T_{i+1}$ were a tile, then it must have been glued at some point in the construction. Then the shards associated to the paths $\gamma_i, \gamma_{i+1}$ would contain $T_i \cup T_{i+1}$. By replacing $\gamma_i, \gamma_{i+1}$ with the concatenation of these two paths, we would reduce the length of the decomposition. But the decomposition is said to be of minimal length, so this is a contradiction.
	
	Finally, if $T_i \cap T_{i+1}$ contains 2-cells then it must be the union of subtiles in $T_i$ and $T_{i+1}$, by Proposition \ref{prop:intersectionproperties}. Therefore $\overline{T_{i+1}-T_i}$ is as well.
\end{proof}

\begin{defn}
	Suppose a decomposition $\gamma = \gamma_1\cdots \gamma_n$ has endpoints $x_0, x_n$. Then $\gamma$ \emph{returns at $T_0$} for a tile $T_0 \in \T$ if there is no $T_i$ which contains $\bigcup_j T_j$ and $x_0, x_n \in T_0$.
\end{defn}

This is illustrated in Figure \ref{fig:returningdecomp}.

\begin{figure}
	\centering
	\includegraphics[width=.5\textwidth]{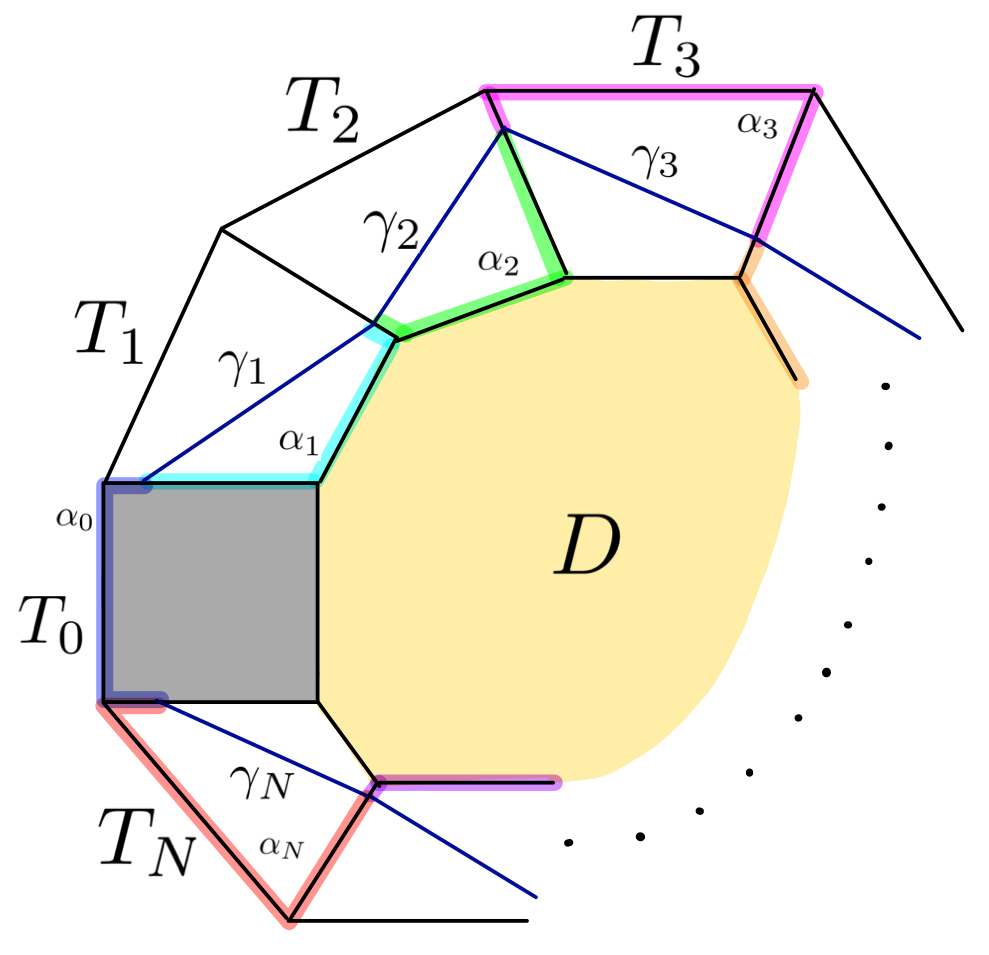}
	\caption{A returning decomposition of the wall-path $\gamma$, bounding a disk diagram $D$.}
	\label{fig:returningdecomp}
\end{figure}

\begin{thm}\label{thm:weaknoreturns} Given a tile collection as built in Construction \ref{tileconstruction}, with balanced tile walls as built in Construction \ref{wallconstruction}, with overwhelming probability, for each $N>0$ there is no wall segment of length $<N$ which returns at a tile $T$.
\end{thm}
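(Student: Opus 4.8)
The plan is to argue by contradiction, using the Isoperimetric Inequality (Proposition~\ref{IPI}) to rule out a shortest returning wall segment. Suppose for some $N$ there is a wall segment $\gamma$ of length $<N$ that returns at a tile $T_0$; among all such, choose one of minimal length, and fix a reduced decomposition $\gamma = \gamma_1\cdots\gamma_n$ of minimal length, with associated tiles $T_i$ and shards, so that $x_0, x_n \in T_0$ but no single $T_i$ contains $\bigcup_j T_j$. Since $\gamma$ returns at $T_0$, its endpoints $x_0, x_n$ lie within $T_0$, and hence within distance $\diam(T_0) \le \Bal(T_0) \le \ell/2$ of each other in $X^{(1)}$. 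Together with the balance of the tile-walls (Theorem~\ref{thm:balancingwalls}), each piece $\gamma_i$ has endpoints separated by at least $\Bal(\sh_T(\gamma_i))$ in its shard, so the concatenation $\gamma$ cannot be too short unless the shards $\sh_T(\gamma_i)$ overlap substantially and the cancellation among them is large.

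Next I would assemble the disk diagram (or, if $\gamma$ is nonplanar, the bounded $2$-complex) $D$ bounded by $\gamma$ together with a geodesic in $T_0$ joining $x_0$ to $x_n$; see Figure~\ref{fig:returningdecomp}. The key estimate is a lower bound on $\Can(D)$ coming from two sources: first, the internal cancellation of each shard $\sh_T(\gamma_i)$, which is large because shards are (sub)tiles and hence satisfy $\Can \ge \fourl(|\cdot|-1)$; second, the cancellation contributed by the overlaps $T_i \cap T_j$, controlled via the superadditivity of $\Can$ stated right after the definition of cancellation. On the other hand, $|D|$ is bounded in terms of $n$ and the uniform bound on tile size (Remark~\ref{boundedsize}), while the total boundary length $|\partial D| \le |\gamma| + \diam(T_0) < N + \ell/2$ is bounded independently of $\ell$-scaling. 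Comparing $\Can(D)$ against $(d+\varepsilon)|D|\ell$ and using that the balanced walls force the $\gamma_i$ to be long relative to $|D|$ should produce $\Can(D) > (d+\varepsilon)|D|\ell$, contradicting Proposition~\ref{IPI}.

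The main obstacle will be the bookkeeping that converts "balanced" into a usable quantitative lower bound on $\Can(D)$: one must carefully track how the shards $\sh_T(\gamma_i)$ sit inside $D$, ensure no $2$-cell is triple-counted (using reducedness of the decomposition and Lemma~\ref{lemma:minimaldecomposition}), and handle the case where some $T_i \cap T_{i+1}$ contains $2$-cells, which by Lemma~\ref{lemma:minimaldecomposition} decomposes $\overline{T_{i+1}-T_i}$ into potiles that can be reattached to shorten the decomposition — contradicting minimality. A secondary subtlety is the possibility that $D$ is nonplanar; here I would invoke Remark~\ref{rem:maxtilesize} and Proposition~\ref{IPI} in its $(K,K')$-bounded form, noting that $N$ bounds the number of tiles and hence the number of gluings. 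Once the minimality of the returning segment and of its reduced decomposition are exploited to eliminate all the degenerate configurations, the isoperimetric contradiction closes the argument uniformly over all $N$, which gives the w.o.p.\ conclusion.
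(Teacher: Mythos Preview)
Your overall strategy---assemble a bounded $2$-complex $Y$ from $T_0,\dots,T_n$ and a disk $D$, then bound $\Can(Y)$ from below using that the tiles are potiles and the $\gamma_i$ are balanced---matches the paper's opening move. But the mechanism by which you reach a contradiction is not right, and this is a genuine gap.

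The cancellation estimate you describe only yields $\Can(Y) > \tfrac{\ell}{4}(|Y|-1+2|E|)$, where $E$ is the set of $2$-cells of $Y$ not in any $T_i$. Compared against Proposition~\ref{IPI} this does \emph{not} give $\Can(Y) > (d+\varepsilon)|Y|\ell$ outright; it only forces $|E|=0$ and $|Y|\le 6$ (and then that $D$ is a tree). In other words, the isoperimetric inequality tells you $Y$ is itself a potile of bounded size, but that is perfectly consistent with Proposition~\ref{IPI} and is not yet a contradiction. Your sentence ``should produce $\Can(D) > (d+\varepsilon)|D|\ell$'' is exactly where the argument breaks: at densities near $1/4$ the balance inequality $\Can(T_i)+\tfrac12\Bal(T_i)\ge \tfrac{\ell}{4}|T_i|$ is essentially sharp, so you cannot squeeze a strict violation of the isoperimetric inequality out of it.

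The paper closes the argument by a further combinatorial step you are missing. Once $D$ is a tree, some $\alpha_i$ is contained in $\alpha_{i-1}\cup\alpha_{i+1}$; picking $j\in\{i-1,i+1\}$ maximizing $|\alpha_i\cap\alpha_j|$ gives $|T_i\cap T_j|\ge \tfrac12\Bal(T_i)$, and a short computation shows $T_i\cup T_j$ is then a potile of size $\le 5$. That contradicts the reducedness of the decomposition (such a pair would have been merged in Construction~\ref{tileconstruction}). So the real contradiction is combinatorial---against the tile construction---not a direct violation of Proposition~\ref{IPI}. (A smaller issue: your bound $|\partial D|\le |\gamma|+\diam(T_0)$ conflates the wall-path length with the $1$-skeleton paths $\alpha_i$; the boundary of $D$ is $\sum|\alpha_i|$, each of which is $\ge \Bal(T_i)\ge \ell/4$.)
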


The proof of this will occupy most of this section, and closely follows the proof of Proposition 5.6 in \cite{MP}. However, the ways that two tiles can share 2-cells is more complicated than in \cite{MP}. We first show that it suffices to prove this for reduced decompositions.

\begin{lemma}
	Any hypergraph segment $\gamma$ of length $\leq N$ admits a reduced decomposition of length $\leq N$, up to taking a subpath of $\gamma$. Furthermore, if $\gamma$ is returning at some tile $T_0$, then up to taking a subpath of $\gamma$, we may assume that the reduced decomposition is also returning (possibly at a different tile).
\end{lemma}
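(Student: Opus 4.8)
The plan is to start from a coarse decomposition of $\gamma$ and simplify it by two moves, running an induction on the combinatorial length $\length(\gamma)$ for the path and, at a fixed path, on the number of pieces of the decomposition.

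First, every edge of the wall containing $\gamma$ lies in a single $2$-cell and is a single edge of the tile-wall of some tile containing that $2$-cell, hence is a wall-path. Writing $\gamma = e_1\cdots e_m$ as the concatenation of its edges and assigning to each $e_k$ a shard $T_k = \sh_{S_k}(e_k)$ for a suitable ambient tile $S_k$ produces a decomposition of $\gamma$ of length $m = \length(\gamma)\le N$. When $\gamma$ is returning, we instead begin from a decomposition witnessing this (one exists by hypothesis). In either case we have an initial decomposition of length $\le N$, and the two simplification moves below never increase it.

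The moves are: (i) if some adjacent pair has $T_i\cup T_{i+1}$ equal to a tile $T$, merge $\gamma_i$ and $\gamma_{i+1}$ into a single piece with assigned tile $\sh_T(\gamma_i\gamma_{i+1})$; this is legitimate exactly as in Lemma \ref{lemma:minimaldecomposition}, since by Construction \ref{wallconstruction} the tile-wall structure of $T$ restricts to that of each of its sub-tiles, so $\gamma_i\gamma_{i+1}$ is again a wall-path, and the number of pieces strictly drops; (ii) if some far pair $T_i,T_j$ with $j > i+1$ shares a $2$-cell, pass to the subpath obtained by deleting the pieces outside the block $\gamma_{i+1}\cdots\gamma_{j-1}$ (or, in the returning case, a suitably chosen enlargement thereof inside $T_i\cup\cdots\cup T_j$), whose length is strictly smaller, and restart the induction on it. Since move (i) strictly decreases the piece count at fixed path length and move (ii) strictly decreases $\length(\gamma)$, the process halts, and it halts at a reduced decomposition of a subpath of $\gamma$ of length $\le N$. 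This gives the first assertion.

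For the second assertion, run the same process from a returning decomposition and track a returning witness. Move (i) fixes the endpoints and leaves the union of assigned tiles unchanged as a subcomplex, hence preserves ``returning'' unless the merged tile swallows that union --- in which case $\gamma$ already lies in a single tile and one passes instead to a strictly shorter subpath still carrying a returning decomposition, which needs a little bookkeeping. The substantive step is move (ii): when $T_i,T_j$ share $2$-cells, Proposition \ref{prop:intersectionproperties} gives that $\overline{T_i\cap T_j}$ and $\overline{T_i-T_j},\overline{T_j-T_i}$ are unions of tiles, and that the wall structure on $\overline{T_i\cap T_j}$ is the common restriction of the ones on $T_i$ and on $T_j$. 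Using this compatibility, I would show that the extracted subpath can be arranged to be returning at a tile of $\overline{T_i\cap T_j}$. Verifying this requires a short case analysis of the ways two tiles in a collection $\T^i$ can share $2$-cells, together with the maximality clauses of Construction \ref{tileconstruction}; this is exactly the combinatorial complication (absent from \cite{MP}) flagged in the introduction, and it is the step I expect to be the main obstacle. Granting it, every move preserves a returning witness, so the terminal reduced decomposition is returning --- which is precisely what the proof of Theorem \ref{thm:weaknoreturns} needs, exactly as in Lemma \ref{lemma:minimaldecomposition}.
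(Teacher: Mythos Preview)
Your overall strategy---start from a coarse decomposition and simplify by merging adjacent compatible pieces and by passing to subpaths when far tiles overlap---matches the paper in spirit, and the induction scheme is fine for the first assertion. The gap is in move~(ii) for the returning case: your guess that the extracted subpath returns at a tile of $\overline{T_i\cap T_j}$ is not what actually happens, and the ``short case analysis'' you defer is essentially the whole content of the proof.

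The paper's argument hinges on a dichotomy you do not have: whether or not $\gamma$ itself meets $T_i\cap T_j$. If it does, say at a vertex $x_k$ with $i\le k\le j$, then the subpath $\gamma_i\cdots\gamma_k$ has both endpoints $x_{i-1},x_k$ in $T_i$, so it returns at $T_i$ (or symmetrically at $T_j$). If $\gamma$ does not meet $T_i\cap T_j$, one asks when the overlap $T_i\cap T_j$ was created in Construction~\ref{tileconstruction}. If it arose in Step~2, the shard of $\gamma_j$ would already lie in $\overline{T_j-T_i}$, contradicting $T_j\subset\sh(\gamma_j)$; hence it arose in Step~3, which forces the existence of a tile $T\in\T$ containing both $T_i$ and $T_j$, and then the inner subpath $\gamma_{i+1}\cdots\gamma_{j-1}$ returns at $T$---a tile \emph{larger} than either $T_i$ or $T_j$, not one contained in $\overline{T_i\cap T_j}$.

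So the returning witness is either one of $T_i,T_j$ themselves or a strictly bigger tile containing both; it is never a piece of their overlap. Once you have this dichotomy the argument is a few lines. Your treatment of move~(i) and of the first assertion is essentially correct; the paper simply invokes minimality via Lemma~\ref{lemma:minimaldecomposition} rather than iterating merges, which amounts to the same thing.
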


\begin{proof}
	For any minimal length decomposition, $T_i \cup T_{i+1}$ is not a potile by Lemma \ref{lemma:minimaldecomposition}. 
	
	Suppose that two non-adjacent tiles $T_i, T_j$ share 2-cells. If $\gamma \cap (T_i \cap T_j) \neq \emptyset$, then we may look at the sub-path of $\gamma$ through $T_i, \dots, T_j$, which must be returning at either $T_i$ or $T_j$. 
	
	On the other hand, suppose that $\gamma \cap (T_i \cap T_j) = \emptyset$. Consider when this intersection $T_i \cap T_j$ arose. If it arose during Step 2 of the construction, then $\sh(\gamma) \subset T_j - (T_i \cap T_j)$, which contradicts the definition of a decomposition. If the intersection arose during Step 3, then there must be some tile $T$ containing both $T_i$ and $T_j$, and we can consider the subpath of $\gamma$ through $T_{i+1}, \dots, T_{j-1}$, which is returning at $T$.
\end{proof}

By choosing arbitrary paths $\alpha_i$ connecting $x_{i-1}$ to $x_i$ (modulo $n$), one can see that every returning decomposition bounds a disk diagram $D$. Note that the $\alpha_i$ connect edge-midpoints, so they are not full edge paths in $X$.

We are now almost ready to prove Theorem \ref{thm:weaknoreturns}. However, since the tiles in the decomposition of a path may overlap, we first need to adjust the decomposition so that (1) no tiles share 2-cells, and (2) each sub-path in the decomposition is balanced in the tile that contains it.

\begin{lemma}\label{lemma:fractured decomp}
    Given a decomposition $\gamma = \gamma_1 \cdots \gamma_n$ returning at $T_0$, there is another decomposition of $\gamma$ into subpaths $\gamma_i^j$ with corresponding tiles $T_{i}^j$ so that no two tiles share 2-cells, and $\gamma_i^j$ is balanced in $T_i^j$.
\end{lemma}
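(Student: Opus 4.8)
The goal of Lemma~\ref{lemma:fractured decomp} is to take a decomposition $\gamma = \gamma_1 \cdots \gamma_n$ returning at $T_0$ and refine it into a decomposition whose tiles are pairwise 2-cell-disjoint and in which each subpath is balanced in its assigned tile. The plan is to process the decomposition one consecutive pair at a time and split tiles along the subcomplexes produced by Proposition~\ref{prop:intersectionproperties}(3).

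First I would observe that after passing to a reduced minimal-length decomposition (using the lemma preceding this statement and Lemma~\ref{lemma:minimaldecomposition}), the only way two tiles $T_i, T_{i+1}$ can share 2-cells is as \emph{adjacent} tiles, and in that case $\overline{T_i \cap T_{i+1}}$ and $\overline{T_{i+1} - T_i}$ are each unions of potiles which are themselves tiles of some $\T^i$, again by Proposition~\ref{prop:intersectionproperties}(3) and Lemma~\ref{lemma:minimaldecomposition}. So for each such offending pair, I would replace the single subpath $\gamma_i$ (or $\gamma_{i+1}$) by the concatenation of its restrictions $\gamma_i^j$ to the pieces of this potile decomposition, assigning to $\gamma_i^j$ the unique piece $T_i^j$ that contains it. Because each $T_i^j$ is a subtile of $T_i$ and the shard $\sh_{T_i}(\gamma_i)$ is defined inductively through exactly these constituent pieces, the balance of $\gamma_i^j$ in $T_i^j$ follows: either $\sh_{T_i^j}(\gamma_i^j) = T_i^j$ and Theorem~\ref{thm:balancingwalls} (applied at the stage where $T_i^j$ was built) gives $|x,x'|_{T_i^j} \ge \Bal(T_i^j)$, or $\sh_{T_i^j}(\gamma_i^j)$ is a proper subtile and $\Bal$ of that subtile is no larger than $\Bal(T_i^j)$ by Remark~\ref{rmk:balancebounds} together with the construction of shards.

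Next I would need to check that this splitting terminates and does not re-introduce 2-cell overlaps between tiles that were previously disjoint. Here I would use a monovariant: each splitting step strictly decreases $\sum_i |T_i|$ (or the number of pairs of index-adjacent tiles sharing a 2-cell), since $|T_i \cap T_{i+1}| < |T_i|$ strictly when the union is not a tile and the tiles are distinct. New non-adjacent overlaps cannot appear because all the new tiles $T_i^j$ are subcomplexes of the old $T_i$, so any 2-cell shared by $T_i^j$ and some far-away $T_k^m$ was already shared by $T_i$ and $T_k$; that case was excluded when we passed to a reduced decomposition (taking a subpath if necessary, which preserves the returning-at-$T_0$ property). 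Finally I would record that the returning condition is inherited: the endpoints $x_0, x_n$ still lie in $T_0$, and refining interior subpaths does not create a tile containing $\bigcup_j T_i^j$ that failed to exist before, since any such tile would contain the original $\bigcup_i T_i$.

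The main obstacle I anticipate is bookkeeping the interaction between \emph{adjacent} overlaps and \emph{non-adjacent} overlaps simultaneously: splitting one adjacent pair could in principle create a new configuration where a piece $T_i^j$ is adjacent (in the new indexing) to a piece $T_{i+2}^{j'}$ coming from a different original tile, and one must verify these still do not share 2-cells or, if they do, that the returning-subpath reduction of the preceding lemma can be reapplied without looping. I would handle this by always reducing to a minimal-length reduced decomposition \emph{after} each round of splitting and invoking the strict monovariant above to guarantee the process halts; the balance claim for each final subpath then reduces, as sketched, to Theorem~\ref{thm:balancingwalls} and the definition of shards.
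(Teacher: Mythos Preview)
Your overall plan coincides with the paper's: reduce to a minimal reduced decomposition so that only adjacent tiles can share $2$-cells, then for each such pair replace $T_{i+1}$ by the potile pieces of $\overline{T_{i+1}-T_i}$ supplied by Proposition~\ref{prop:intersectionproperties}(3) and Lemma~\ref{lemma:minimaldecomposition}. The paper's explicit ``merge $T_{i+1}^{k_i}$ into $T_{i+2}$ if their union is a potile'' step is the concrete version of your proposed re-reduction after each split.

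There is, however, a genuine gap in your balance argument. In the case where $\sh_{T_i^j}(\gamma_i^j)=S$ is a proper subtile you correctly observe $\Bal(S)\le\Bal(T_i^j)$, but Theorem~\ref{thm:balancingwalls} then only yields $|x,x'|\ge\Bal(S)$, which is the wrong direction to conclude $|x,x'|\ge\Bal(T_i^j)$; and the downstream use in Theorem~\ref{thm:weaknoreturns} needs exactly the latter inequality. The paper closes this gap via a point you never state: the gluings that assemble the pieces $T_{i+1}^j$ into $T_{i+1}$ all occur in Step~2 or Step~3 of Construction~\ref{tileconstruction}, and in those steps \emph{no tile-walls are adjusted} (cf.\ Construction~\ref{wallconstruction}). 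Consequently each restriction $\gamma_{i+1}^j$ is literally a wall-path of $T_{i+1}^j$ as that tile stood at the earlier stage of the construction, and its balance in $T_{i+1}^j$ is inherited directly rather than deduced through shard inequalities. Without this observation you have not even verified that $\gamma_i^j$ is a wall-path of the tile $T_i^j$ you assign to it, let alone that it is balanced with respect to $\Bal(T_i^j)$.
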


\begin{proof}
    Suppose $\gamma = \gamma_1 \cdots \gamma_n$ is a reduced returning decomposition of minimal length. Note that because this is reduced, the only way two tiles in the decomposition can share 2-cells is if they are adjacent. 
    
    We will construct the new decomposition inductively. Choose the least $i$ so that $T_{i+1}$ shares a 2-cell with $T_i$. We will adjust $\gamma_{i+1}$ as follows: Consider $\overline{T_{i+1}-T_i}.$ We can express this is a union of non-overlapping potiles, for which all of the glueings that turn those potiles into $T_{i+1}$ occurred in Step 2 or 3. Assume that this union is minimal, in the sense that it has the least number of potiles. Since all glueings occurred in Step 2 or 3, no tile-walls were adjusted, and by the construction $\gamma_{i+1}$ is balanced in each of these potiles. Note that if the union of two of these potiles is a potile, then it must be in $\T$, so the union would not be minimal. Now label these potiles as $T_{i+1}^1, \dots, T_{i+1}^{k_i}$, and let $\gamma_{i+1}^j = \gamma_{i+1} \cap T_{i+1}^j$. Since potiles are of uniformly bounded size, this is a finite decomposition. 
    
    Finally, consider $T_{i+1}^{k_i} \cup T_{i+2}$. If this is a potile, then it is in $\mathcal{T}$, so replace $T_{i+2}$ with $T_{i+2}' = T_{i+1}^{k_i} \cup T_{i+2}$ and replace $\gamma_{i+2}$ with $\gamma_{i+2}' = \gamma_{i+1}^{k_i} \cdot \gamma_{i+2}$. Note that $\sh_T(\gamma_{i+2}') \subset T_{i+2}'$, so we still have that $\gamma_{i+2}'$ is balanced in $T_{i+2}'.$ Repeat this step until $T_{i+2}^{(m)} \cup T_{i+1}^{k_i-m}$ is not a potile.
    
    Now find the next $i$ for which $T_i$ and $T_{i+1}$ share 2-cells, and repeat. This process must terminate because there are a finite number of tiles that $\gamma$ passes through.
\end{proof}

\begin{proof}[Proof of Theorem \ref{thm:weaknoreturns}] It suffices to show that there is no reduced decomposition $\gamma_1\cdots\gamma_n$ returning at a tile $T_0 \in \T$, where $n \leq N$. Suppose for contradiction that there is such a decomposition. By Lemma \ref{lemma:fractured decomp}, we may assume that no two tiles in the decomposition share a 2-cell, and that each $\gamma_i$ is balanced in $T_i$. 
	
	Then $\{T_i\}$ bounds a disk diagram $D$, where each $\alpha_i$ is chosen at random. By passing to a subdiagram, we may assume that no 2-cell in $D$ mapped to $T_i$ is adjacent to $\alpha_i$. Note that $|\alpha_{i+1}| \geq \Bal(T'_{i+1})$.
	
	Let $Y\subset X$ be the union of $T_0, T_1, \dots, T_n$ and the image of $D$. Let $E$ be the 2-cells of $Y$ which do not lie in $T_0, \dots, T_n$.

	\begin{claim} We have $|E| = 0$, $|Y| \leq 6$, and $|\alpha_0|\leq \frac{\ell}{2}$. 
	\end{claim}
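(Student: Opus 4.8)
The plan is to run $Y$ against the isoperimetric inequality for non-planar diagrams (Proposition~\ref{IPI}) and squeeze the three statements out of the slack. First I would check that $Y$ is eligible for Proposition~\ref{IPI}: we have $n\le N$, each $|T_i|\le 5$, and $D$ may be taken reduced, so by Proposition~\ref{IPIdisks} $\Can(D)\le(d+\epsilon)|D|\ell$ and hence $|D|\le|\partial D|/\bigl((1-2d-2\epsilon)\ell\bigr)$; since $|\partial D|\le\sum_i\diam\bigl(T_i^{(1)}\bigr)+|\alpha_0|$ is bounded in terms of $N$ and $\ell$, we get $|Y|\le K=K(N)$. By Remark~\ref{rem:maxtilesize} $Y$ is $(K,K')$-bounded, and as it fulfils $R$, Proposition~\ref{IPI} gives $\Can(Y)\le(d+\epsilon)|Y|\ell$ for every $\epsilon>0$; since $d<3/14<1/4$ I will use it in the form $\Can(Y)<\tfrac14|Y|\ell$. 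I would also take each $\alpha_i$ ($i\ge 1$) to be geodesic in $T_i^{(1)}$, so $|\alpha_i|=|x_{i-1},x_i|_{T_i}\ge\Bal(T_i)$ by balance of $\gamma_i$ in $T_i$, and $\alpha_0$ geodesic in $T_0^{(1)}$ (a geodesic of $X^{(1)}$ once we know $|\alpha_0|\le\ell/2$, by Corollary~\ref{cor:geodesics}).

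Next I would bound $\Can(Y)$ from below. After the fracturing of Lemma~\ref{lemma:fractured decomp} the tiles $T_0,\dots,T_n$ pairwise share no $2$-cells, so cancellation is superadditive over them, and on top of $\sum_i\Can(T_i)$ I can add the cancellation coming from the $n+1$ hinge edges through the midpoints $x_0,\dots,x_n$, from gluing $\partial D=\alpha_1\cdots\alpha_n\alpha_0$ against the tiles, and from the interior of $D$. The central point is the identity $\Can(T_i)+\Bal(T_i)=\fourl(|T_i|+1)$: because no $2$-cell of $D$ lying over $T_i$ is adjacent to $\alpha_i$, each of the $|\alpha_i|\ge\Bal(T_i)$ edges of $\alpha_i$ acquires strictly larger degree in $Y$ than in $T_i$, so the contribution of $T_i$ together with its chord is $\ge\fourl(|T_i|+1)$ with nothing double-counted. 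Moreover every $2$-cell of $E$ already has all of its edges present in $Y$ — its interior-$D$ edges are shared with neighbouring $D$-cells and its $\partial D$ edges are shared with the tiles — so each such cell raises $\Can(Y)$ by a full $\ell$ while raising $|Y|$ by only $1$; since $\ell>\tfrac14\ell$ the cells of $E$ are strictly "curvature-positive." Assembling these pieces gives an inequality that, combined with $\Can(Y)<\tfrac14|Y|\ell$, is violated as soon as $E\ne\emptyset$ (the curvature-positive excess of the $E$-cells cannot be absorbed); hence $|E|=0$.

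With $|E|=0$ we have $Y=\bigcup_{i=0}^n T_i$ and the loop $\alpha_1\cdots\alpha_n\alpha_0$ is null-homotopic inside $Y$ itself. I would then rerun the count keeping track of the consecutive intersections: by Lemma~\ref{lemma:minimaldecomposition} (and reducedness) $T_i\cup T_{i+1}$ is not a potile, so $\Can(T_i)+\Can(T_{i+1})+|T_i\cap T_{i+1}|<\fourl(|T_i|+|T_{i+1}|-1)$, forcing $|T_i\cap T_{i+1}|<\fourl$; feeding this, together with $|\alpha_i|\ge\Bal(T_i)$ and $|\alpha_i|\le\diam(T_i^{(1)})$, into $\Can(Y)<\tfrac14|Y|\ell$ pins $|Y|\le 6$, and then substituting $|Y|\le 6$ back into the same inequality (where the only remaining free quantity is $|\alpha_0|$) yields $|\alpha_0|\le\ell/2$. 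The step I expect to be the main obstacle is exactly this cancellation bookkeeping: organising the lower bound for $\Can(Y)$ so that each shared edge, each chord $\alpha_i$, and each cell of $E$ is charged once and only once against $\sum_i\Can(T_i)$, and then tuning the resulting constants so that all three conclusions come out precisely at the threshold $d<3/14$ — anything looser gives only a bound of the form $|Y|\lesssim n$ rather than $|Y|\le 6$.
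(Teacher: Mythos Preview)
Your plan is the paper's: bound $|Y|$ a priori so Proposition~\ref{IPI} applies, lower-bound $\Can(Y)$, and compare. But the two places where you depart from the paper are exactly where the argument breaks. The charging you describe---each $\alpha_i$ contributing a full $\Bal(T_i)$ and each cell of $E$ a full $\ell$---double-counts: an edge of $\alpha_i$ may also lie in $\alpha_{i\pm1}$, in $T_0$, or on the boundary of an $E$-cell, and then it is being charged from both sides. The paper sidesteps this with the crude but safe inequality $\Can(\bigcup Y_k)\ge\sum_k\Can(Y_k)+\tfrac12\sum_k\bigl|Y_k\cap\bigcup_{j\ne k}Y_j\bigr|$, taking only $\tfrac12|\alpha_i|$ and $\tfrac12|E|\ell$. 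With $|\alpha_i|\ge\Bal(T_i)$ and the potile estimate $\Can(T_i)+\tfrac12\Bal(T_i)\ge\tfrac{\ell}{4}|T_i|$, this yields in one line
\[
\Can(Y)\ \ge\ \tfrac{\ell}{4}(|Y|-1)+\tfrac{\ell}{4}|E|+\tfrac12|\alpha_0|,
\]
from which $|E|=0$ and $|\alpha_0|<\ell/2$ drop out immediately via Proposition~\ref{IPI}.

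Your route to $|Y|\le 6$ through $|T_i\cap T_{i+1}|<\tfrac{\ell}{4}$ and $|\alpha_i|\le\diam(T_i^{(1)})$ is both unnecessary and, as stated, does not pin down the constant~$6$; those bounds only give $|Y|$ controlled by $n$, which you already knew. The point you are missing is that the displayed inequality already says $\Can(Y)\ge\tfrac{\ell}{4}(|Y|-1)$, i.e.\ $Y$ itself satisfies the potile inequality, and then $|Y|\le 6$ is nothing more than Remark~\ref{boundedsize} at $d<3/14=\tfrac{6}{4\cdot 7}$. No further structure of the decomposition is used, and $|\alpha_0|\le\ell/2$ does not require feeding $|Y|\le 6$ back in---it is already present in the same inequality.
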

	\begin{proof} Thinking of $Y = \{T_0\} \cup \{T_i\} \cup E,$ we can bound $\Can(Y)$ by:
		\begin{align*}
		\Can(Y) &\geq \Can(T_0) + \sum_{i=1}^n \Can(T_i) + \frac{1}{2}\left( \sum_{i=1}^n|\alpha_i| + |E|\ell  + |\alpha_0| \right) \\
		&\geq \Can(T_0) + \fourl|\bigcup T_i|+ \frac{1}{2}|E|\ell + \frac{1}{2}|\alpha_0| \\
		&\geq \fourl(|T_0|-1) + \fourl |\bigcup T_i| + \frac{1}{2}|E|\ell + \frac{1}{2}|\alpha_0|\\
		&= \fourl(|Y| - 1+ 2|E|)+ \frac{1}{2}|\alpha_0|\\
		&> \fourl (|Y| -1 + 2|E|).
		\end{align*}
		
		Since $n \leq N$ and tiles of $\T$ have size $\leq 6$, $|\partial D|/\ell$ is uniformly bounded. By Theorem \ref{IPIdisks}, $|D|$ is uniformly bounded, so $Y$ is as well.  Thus by Proposition \ref{IPI}, $|E| = 0$ and therefore $|\alpha_0| < \frac{\ell}{2}$. By Remark \ref{rem:maxtilesize}, since $Y$ is a potile we have $|Y| \leq 6$.
	\end{proof}
	
	\begin{claim} We have $|D| = 0$, so $D$ is a tree.
	\end{claim}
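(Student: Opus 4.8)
The plan is to feed the disk diagram $D$ — with its boundary split into the pieces $\alpha_1,\dots,\alpha_n,\alpha_0$ — into Proposition \ref{IPI}, but through an auxiliary $2$-complex in which $D$ is kept separate from the tiles, so that the $2$-cells of $D$ contribute their own cancellation. Recall that by Lemma \ref{lemma:fractured decomp} we may assume the $T_i$ pairwise share no $2$-cells and each $\gamma_i$ is balanced in $T_i$. I would first take each $\alpha_i$ to be a path \emph{inside} $T_i$ joining the endpoints of $\gamma_i$ (and $\alpha_0$ a path inside $T_0$ joining $x_n$ to $x_0$), normalized as already arranged so that no $2$-cell of the reduced diagram $D$ adjacent to $\alpha_i$ maps into $T_i$. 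Any path in $T_i$ between the endpoints of $\gamma_i$ has length at least $|x_{i-1},x_i|_{T_i}\ge\Bal(T_i)$ and at most $\diam T_i^{(1)}$, which is uniformly bounded since potiles have uniformly bounded size.

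Next I would form
\[ Z=\bigl(T_0\sqcup T_1\sqcup\cdots\sqcup T_n\sqcup D\bigr)\big/\!\sim, \]
gluing $D$ to $T_i$ along $\alpha_i$ for each $i\in\{0,\dots,n\}$. Since $|D|$ is uniformly bounded (established in the proof of the previous claim) and the $|T_i|$ are, $Z$ is $(K,K')$-bounded with $K,K'$ depending only on $N$; and since $D$ is reduced, the $T_j$ pairwise share no $2$-cells, and no $2$-cell of $D$ meeting a given $\alpha_i$ lies in $T_i$, no reducible pair is created along the gluings, so $Z$ fulfils $R$. Writing $L=|\partial D|=\sum_{i\ge 1}|\alpha_i|+|\alpha_0|$, the combinatorial gluing estimate for $\Can$ applied to the $n+2$ pieces gives
\[ \Can(Z)\ \ge\ \sum_{i=0}^n\Can(T_i)+\Can(D)+L\ =\ \sum_{i=0}^n\Can(T_i)+\frac{|D|\ell}{2}+\frac{L}{2}, \]
where I used the identity $\Can(D)=\frac{1}{2}\bigl(|D|\ell-L\bigr)$ for a disk diagram.

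To finish, bound the right-hand side from below using $|\alpha_i|\ge\Bal(T_i)$, the potile inequality $\Can(T_i)\ge\fourl(|T_i|-1)$, and $\Bal(T_i)=\fourl(|T_i|+1)-\Can(T_i)$; these give $\Can(T_i)+\frac{1}{2}\Bal(T_i)\ge\fourl|T_i|$ for $i\ge1$, and hence $\Can(Z)\ge\fourl\bigl(P-1\bigr)+\frac{|D|\ell}{2}$ where $P=\sum_{i=0}^n|T_i|$. Since $|Z|=P+|D|$, Proposition \ref{IPI} forces
\[ \fourl(P-1)+\frac{|D|\ell}{2}\ \le\ (d+\epsilon)\,(P+|D|)\,\ell. \]
Fixing $\epsilon$ small enough that $d+\epsilon<1/4$, the $P$-terms are favorable, and rearranging gives $|D|\,\ell\,\bigl(\frac12-d-\epsilon\bigr)<\frac{\ell}{4}$; since $\frac12-d-\epsilon>\frac14$, this yields $|D|<1$, i.e. $|D|=0$. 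A disk diagram with no $2$-cells is a simply connected graph, hence a tree.

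The one point I expect to need care is verifying that $Z$ genuinely fulfils $R$ — that is, that the normalization of passing to a subdiagram can be carried out while keeping each $\alpha_i$ inside $T_i$ and of length at least $\Bal(T_i)$, so that no reduction appears along the $\alpha_i$ — together with the harmless degenerate case in which $T_0$ coincides with, or shares $2$-cells with, some $T_i$, where one simply does not duplicate that tile in $Z$ and glues $\alpha_0$ to it directly. Everything else is the routine counting above, and — as with related parts of the paper — it uses only $d<1/4$.
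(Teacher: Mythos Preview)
Your argument is correct and takes a genuinely different route from the paper's. The paper works inside the \emph{image} complex $Y\subset X$: having already shown in the previous claim that $|E|=0$, it argues by contradiction that if $|D|>0$ then some $2$-cell of $D$ maps into a tile $T_i$, and the boundary of a connected component of the preimage of $T_i$ in $D$ is a closed path of length $\ge \ell$, which feeds an extra $\ell/4$ into the cancellation estimate for $Y$ and contradicts Proposition~\ref{IPI}. Your approach instead builds the auxiliary complex $Z$ with $D$ kept disjoint from the tiles, counts $\Can(Z)$ directly via $\Can(D)=\tfrac12(|D|\ell-L)$ together with $\Can(T_i)+\tfrac12\Bal(T_i)\ge\fourl|T_i|$, and reads off $|D|<1$ from Proposition~\ref{IPI}. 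This is more self-contained: you use from the previous claim only the uniform bound on $|D|$ (needed for $(K,K')$-boundedness of $Z$), not the conclusion $|E|=0$. The paper's version is terser and piggybacks on the computation already done for $Y$; yours is cleaner as a stand-alone argument and makes the role of the balance inequality for each $T_i$ more transparent. Your caveat about verifying that $Z$ fulfils $R$ is the right thing to flag; after subdividing edges at the midpoints $x_i$ so that each half-edge of $\partial D$ lies in a single $\alpha_i$, the normalization (no $2$-cell of $D$ adjacent to $\alpha_i$ maps into $T_i$) together with the fact that the $T_i$ pairwise share no $2$-cells gives local injectivity at every edge of $Z$.
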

	\begin{proof}
		Suppose this is not the case. Let $T \subset D$ be a connected component in the pre-image of some $T_i$. In the above calculation, we can replace $\alpha_i$ with the image of $\partial T$ in $Y$. By Corollary \ref{cor:geodesics}, $|\partial D|\geq \ell$ so $\Can(T_i) + \frac{1}{2}|\alpha_i| \geq \fourl(|T_i| + 1)$. But this gives an extra $\fourl$ in the calculation above, which contradicts Proposition \ref{IPI}.
	\end{proof}

	\begin{claim}
		We have $n >2$. In particular, for every pair of tiles $|T_i \cup T_j|\leq 5$.
	\end{claim}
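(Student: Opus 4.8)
The plan is to rule out $n=1$ and $n=2$ directly, and then deduce the ``in particular'' clause from the size bound $|Y|\le 6$. If $n=1$ then $\bigcup_j T_j=T_1$ and $T_1$ is itself one of the tiles of the decomposition, so there is a tile $T_i$ (namely $T_1$) containing $\bigcup_j T_j$; by definition $\gamma$ does not return at any tile, contradicting the hypothesis. Hence $n\ge 2$.

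The substantive case is $n=2$, so suppose $\gamma=\gamma_1\gamma_2$ with $\gamma_1\subset T_1$, $\gamma_2\subset T_2$ meeting at $x_1\in T_1\cap T_2$, the tiles $T_1,T_2$ sharing no $2$-cells (Lemma \ref{lemma:fractured decomp}), each $\gamma_i$ balanced in $T_i$, and the outer endpoints $x_0,x_2$ lying in $T_0$. We already know $D$ is a tree, $|Y|\le 6$, and the closing path $\alpha_0\subset T_0$ joining $x_0$ to $x_2$ has $|\alpha_0|<\frac{\ell}{2}$. If $|T_1\cap T_2|\ge\fourl$ and $|T_1\cup T_2|\le 5$, then $T_1\cup T_2$ is a potile of size $\le 5$, so at the terminal stage one of Steps~1--3 of Construction \ref{tileconstruction} would still fire (producing $T_1\cup T_2$, or merging $T_1,T_2$ away), contradicting that $\T$ is terminal and that the decomposition is reduced. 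This leaves $|T_1\cup T_2|=6$: then $Y=T_1\cup T_2$, so $T_0\subset T_1\cup T_2$, hence $\alpha_0$ lies in $T_1\cup T_2$ and $|x_0,x_2|_{T_1\cup T_2}\le|\alpha_0|<\frac{\ell}{2}$. I would contradict this by producing the lower bound $|x_0,x_2|_{T_1\cup T_2}\ge\frac{\ell}{2}$: when $T_1\cap T_2$ is a round tree this follows from Lemma \ref{lemma:roundtrees} (the concatenated wall is balanced in $T_1\cup T_2$, and for a $6$-potile $\Bal(T_1\cup T_2)=\frac{\ell}{2}$ precisely in the extremal, round, equal-cancellation case), while when $T_1\cap T_2$ is a long tree one argues via Lemma \ref{MP4.5} together with a careful accounting of the common backtrack of $\gamma_1,\gamma_2$ at $x_1$ inside the tree $T_1\cap T_2$, or shows directly via Proposition \ref{IPI} that a $6$-potile $T_1\cup T_2$ with $T_1\cap T_2$ long cannot contain a tile $T_0$ meeting both $\overline{T_1-T_2}$ and $\overline{T_2-T_1}$ deeply. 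The long-tree sub-case is the main obstacle, and is where the bookkeeping is most delicate.

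Finally, the ``in particular'' clause is a size count: once $n\ge 3$, the tiles $T_1,\dots,T_n$ of the decomposition pairwise share no $2$-cells (Lemma \ref{lemma:fractured decomp}) and all lie in $Y$, so $\sum_{i=1}^{n}|T_i|=|T_1\cup\cdots\cup T_n|\le|Y|\le 6$; hence for any $i\ne j$ we get $|T_i\cup T_j|=|T_i|+|T_j|\le 6-\sum_{k\ne i,j}|T_k|\le 6-(n-2)\le 5$.
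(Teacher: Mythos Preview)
Your $n=2$ analysis has a genuine gap: you only treat the case $|T_1\cap T_2|\ge \fourl$, and never address $|T_1\cap T_2|<\fourl$. Nothing in the setup forces the intersection to be large; after Lemma~\ref{lemma:fractured decomp} we only know that $T_1,T_2$ share no $2$-cells and that each $\gamma_i$ is balanced in $T_i$, not that $|T_1\cap T_2|\ge\fourl$. So the case split ``$|T_1\cap T_2|\ge\fourl$ and $|T_1\cup T_2|\le 5$'' versus ``$|T_1\cup T_2|=6$'' is not exhaustive.

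Moreover, the $|T_1\cup T_2|=6$ branch you spend the most effort on is vacuous. Since $T_0$ contributes at least one $2$-cell to $Y$ and $|Y|\le 6$, one has $|T_1|+|T_2|\le 6-|T_0|\le 5$ outright; there is no ``size $6$'' sub-case to worry about, and your long-tree bookkeeping (which you yourself flag as the main obstacle) is unnecessary. The paper uses exactly this observation: from $|T_1\cup T_2|\le 5$ the ``would have been glued'' argument forces $|T_1\cap T_2|<\fourl$, and then the real work is a direct cancellation estimate. Using that $D$ is a tripod and $|\alpha_i|\ge\Bal(T_i)$ for $i=1,2$, one computes
\[
\Can(Y)\ \ge\ \Can(T_0)+\sum_{i=1}^{2}\Can(T_i)+\sum_{i=1}^{2}|\alpha_i|-\fourl\ \ge\ \fourl(|T_0|-1)+\fourl\sum_{i=1}^{2}(|T_i|+1)-\fourl\ =\ \fourl|Y|,
\]
contradicting Proposition~\ref{IPI}. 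This is precisely the missing $|T_1\cap T_2|<\fourl$ case, and it is where the claim is actually decided; your argument never reaches it.

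Your handling of $n=1$ (via the definition of ``returning'') and of the ``in particular'' clause is fine.
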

	\begin{proof}
		By Lemma \ref{MP4.4}, $n > 1$. If $n = 2$, then $D$ is a tripod. If $|T_1 \cap T_2|\geq \fourl$, since $|T_1| + |T_2| \leq 6- |T_0| 
		\leq 5$, then they would have been glued together in Construction \ref{tileconstruction}, which is a contradiction. Therefore:
		\begin{align*}
		\Can(Y) &\geq \Can(T_0) + \sum_{i=1}^n\Can(T_i) + \sum_{i=1}^2 |\alpha_i| - \fourl \\
		&\geq \frac{\ell}{4}(|T_0|-1) + \fourl\sum_{i=1}^2 (|T_i|+1) - \fourl \\
		&= \fourl|Y|,
		\end{align*}
		which contradicts Lemma \ref{IPI}.
	\end{proof}
	
	\begin{claim} There is some $1 \leq i \leq n$ and $j = i\pm 1$ (modulo $n+1$) such that $T_i \cup T_j$ is a potile.
	\end{claim}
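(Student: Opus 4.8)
The plan is to argue by contradiction, assuming that $T_i\cup T_j$ is not a potile for every $1\le i\le n$ and every $j\equiv i\pm 1\pmod{n+1}$, and to derive a contradiction with the claims already proved (that $Y=\bigcup_{i=0}^n T_i$ is a potile with $|Y|\le 6$, that $D$ is a tree, that $n>2$, and, after Lemma \ref{lemma:fractured decomp}, that the $T_i$ pairwise share no $2$-cells so $|Y|=\sum_i|T_i|\ge n+1$, hence $n\in\{3,4,5\}$).

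First I would convert the contradiction hypothesis into a statement about overlaps. For a consecutive pair $T_i,T_j$, since they share no $2$-cells the equality case of the cancellation inequality gives $\Can(T_i\cup T_j)=\Can(T_i)+\Can(T_j)+|T_i\cap T_j|$; combining this with $\Can(T_i)\ge\fourl(|T_i|-1)$ and $\Can(T_j)\ge\fourl(|T_j|-1)$ shows that $T_i\cup T_j$ is automatically a potile once $|T_i\cap T_j|\ge\fourl$. Hence the contradiction hypothesis forces $|T_i\cap T_j|<\fourl$ for every consecutive pair. Next I would bring in the tree $D$ together with the balance of the $\gamma_i$: choosing each $\alpha_i$ geodesic in $T_i^{(1)}$, balance of $\gamma_i$ in $T_i$ gives $|\alpha_i|\ge\Bal(T_i)\ge\fourl$ for $1\le i\le n$, while the earlier claims give $|\alpha_0|<\tfrac{\ell}{2}$ and that $\partial D=\alpha_0\alpha_1\cdots\alpha_n$ is a nullhomotopic loop in the tree $D$, so $\alpha_1\cdots\alpha_n$ reduces in $D$ to $\alpha_0^{-1}$, a geodesic of length $<\tfrac{\ell}{2}$. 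Comparing the total length $\sum_{i=1}^n|\alpha_i|\ge n\fourl$ with this reduced length, the walk $\alpha_1\cdots\alpha_n$ must undergo at least $\tfrac12\bigl(n\fourl-\tfrac{\ell}{2}\bigr)$ of cancellation in the tree, and since each $\alpha_i$ is reduced all of this cancellation is concentrated at the junction vertices $x_i$, i.e. in the overlaps of consecutive $\alpha_i,\alpha_{i+1}$ (those overlaps mapping into $T_i\cap T_{i+1}$). For $n\in\{3,4,5\}$ I would then analyze this tree cancellation pattern, feeding in the potile inequality $\Can(Y)\ge\fourl(|Y|-1)$ as a lower bound on the total cancellation available around the cycle, to locate a consecutive pair whose overlapping segment at $x_i$ has length $\ge\fourl$; that segment lies in $T_i\cap T_{i+1}$, contradicting the previous sentence and proving the claim.

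The step I expect to be the main obstacle is the last one: ruling out configurations in which the cyclic cancellation "unzips" the loop $\partial D$ through many small overlaps rather than one overlap of size $\ge\fourl$. Handling this requires playing the potile property of $Y$ and the exact $\Bal$-bounds on $|\alpha_i|$ against the single short side $|\alpha_0|<\tfrac{\ell}{2}$, and it is here that the restriction to tiles of size $\le 5$ — equivalently $d<3/14$, which forces $|Y|\le 6$ and hence keeps the cycle of tiles short enough to enumerate — is essential; without it the cycle could be long and such "unzipping" configurations could a priori occur.
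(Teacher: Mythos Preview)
Your reduction to showing $|T_i\cap T_j|\ge\fourl$ for some consecutive pair is natural, but it is stronger than necessary, and that excess strength is precisely what creates the obstacle you flag. The paper sidesteps the case analysis on $n$ by lowering the target: an overlap of only $\tfrac12\Bal(T_i)$ already forces $T_i\cup T_j$ to be a potile. Indeed, from the definition of balance,
\[
\Can(T_i)+\tfrac12\Bal(T_i)=\tfrac12\Can(T_i)+\tfrac{\ell}{8}\bigl(|T_i|+1\bigr)\ \ge\ \tfrac{\ell}{8}\bigl(|T_i|-1\bigr)+\tfrac{\ell}{8}\bigl(|T_i|+1\bigr)=\fourl|T_i|,
\]
so $\Can(T_i\cup T_j)\ge\Can(T_j)+\Can(T_i)+\tfrac12\Bal(T_i)\ge\fourl(|T_j|-1)+\fourl|T_i|$. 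The point is a trade-off you are not using: when $\Can(T_i)$ is small, $\Bal(T_i)$ is large, so the half-balance threshold self-adjusts; your fixed $\fourl$ threshold throws this away and then has to be recovered by hand.

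To obtain the half-balance overlap the paper uses a single structural fact about the tree $D$ rather than a global cancellation count: since $D$ is a tree and the geodesic arcs $\alpha_0,\dots,\alpha_n$ form its boundary walk, an innermost-arc argument gives some $i$ (and with $n+1\ge4$ one may take $i\neq0$) for which $\alpha_i\subseteq\alpha_{i-1}\cup\alpha_{i+1}$. Choosing $j\in\{i-1,i+1\}$ to maximize the overlap then yields $|\alpha_i\cap\alpha_j|\ge\tfrac12|\alpha_i|\ge\tfrac12\Bal(T_i)$ immediately, and the computation above finishes. Your proposed enumeration over $n\in\{3,4,5\}$ and the analysis of how the tree cancellation distributes among the junctions may well be completable, but it is working harder for a stronger conclusion than the claim requires; the paper's route needs neither the restriction to small $n$ nor the global potile bound on $Y$ at this step.
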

	
	\begin{proof} Since $|Y-T_0| \leq 5,$ and $n >2$, the maximal size of $|T_i \cup T_j| \leq  5.$
		Since $D$ is a tree, there is some $\alpha'_i$ is contained in $\alpha'_{i-1}\cup \alpha'_{i+1}$. Choose $j \in \{i-1, i+1\}$ to maximize $|\alpha'_i \cap \alpha'_j|$. 
		So $|\alpha'_i \cap \alpha'_j| \geq \frac{1}{2}\Bal(T_i)$, and $T_i' \cup T_j'$ is a tile. Indeed, 
		\begin{align*}
		\Can(T_i\cup T_j) &\geq \Can(T_j) + \frac{1}{2}\Bal(T_i) + \Can(T_i) \\
		&\geq \frac{\ell}{4}(|T_i|-1) + \frac{\ell}{8}(|T_i| + 1) + \frac{1}{2}\Can(T_j)\\
		&\geq \fourl(|T_j|-1) + \fourl |T_i|.
		\end{align*}
	\end{proof}
	
	Finally, since $T_i \cup T_j$ is a tile and it has size at most $5$, this is a contradiction of Construction \ref{tileconstruction}.
\end{proof}

As an immediate consequence, we get the following:

\begin{cor}
	At density $d<3/14$, with overwhelming probability, for every $N>0$ there is no returning decomposition of length $<N$.
\end{cor}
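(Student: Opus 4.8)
The plan is to deduce this statement directly from Theorem~\ref{thm:weaknoreturns}; essentially all of the content is already there, and what remains is to reconcile the two notions of ``length'' and to record where the standing hypothesis $d<3/14$ is used. Recall that a returning decomposition is by definition a decomposition $\gamma=\gamma_1\cdots\gamma_n$ of a wall segment which returns at some tile $T_0$, and its \emph{length} is the number $n$ of constituent wall-paths, whereas the ``length'' of a wall segment in Theorem~\ref{thm:weaknoreturns} counts wall-edges. These two quantities differ by at most a universal constant: each $\gamma_i$ lies in a single tile, each $2$-cell of a tile carries at most one wall-edge, and by Remark~\ref{boundedsize} the hypothesis $d<3/14$ bounds the number of $2$-cells of any (po)tile by $6$, so a returning decomposition of length $n$ is carried by a returning wall segment of wall-edge-length at most $6n$. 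In particular the ranges ``length $<N$'' in the two statements agree after a harmless rescaling of $N$.

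Next I would argue by contradiction. Suppose a returning decomposition $\gamma=\gamma_1\cdots\gamma_n$ with $n<N$ exists, returning at $T_0$. Apply the (unlabeled) reduction lemma stated immediately after Theorem~\ref{thm:weaknoreturns}: up to replacing $\gamma$ by a subpath we may take the decomposition to be reduced, and up to a further passage to a subpath the returning property is preserved, possibly at a different tile. Since passing to a subpath and merging adjacent pieces whose union is a tile never increases the number of pieces, the resulting reduced returning decomposition still has length $\le n<N$. But the proof of Theorem~\ref{thm:weaknoreturns} establishes precisely that, with overwhelming probability, there is no reduced decomposition of length $\le N$ returning at a tile of $\T$; this is where the size bound $|Y|\le 6$ (again Remark~\ref{boundedsize}, using $d<3/14$) together with Propositions~\ref{IPIdisks} and~\ref{IPI} enter. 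This contradiction yields the Corollary, the ``overwhelming probability'' being inherited verbatim from Theorem~\ref{thm:weaknoreturns}; no further appeal to randomness is required.

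The only step requiring any care — and it is genuinely minor — is the bookkeeping: one must verify that each reduction move (passing to a subpath of $\gamma$; replacing two adjacent tiles $T_i,T_{i+1}$ by $T_i\cup T_{i+1}$ when that union is a tile) weakly decreases the length count while preserving the ``returns at some tile'' condition, so that the strict bound $n<N$ survives. Both facts are exactly what the reduction lemma provides, so this is the ``immediate consequence'' advertised in the text, with the density bound $d<3/14$ carried along only through its role in Theorem~\ref{thm:weaknoreturns} (bounding potile size and supplying the isoperimetric input).
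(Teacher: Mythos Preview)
Your proposal is correct and matches the paper's approach: the paper records the corollary simply as ``an immediate consequence'' of Theorem~\ref{thm:weaknoreturns}, and you have supplied exactly the unpacking that makes this immediate---invoking the reduction lemma to pass to a reduced returning decomposition and noting that $d<3/14$ is what supplies the potile-size bound (hence the balanced tile-wall hypothesis and the estimate $|Y|\le 6$) used inside that theorem's proof. Your extra care in reconciling edge-length of a wall segment with decomposition length is harmless and arguably prudent, though in the paper both occurrences of ``length'' are effectively the decomposition length $n$ (the proof of Theorem~\ref{thm:weaknoreturns} opens by reducing to ``no reduced decomposition $\gamma_1\cdots\gamma_n$ with $n\le N$''), so that step is not strictly needed.
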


\section{Action on a CAT(0) Cube Complex} \label{sec:cubulation}

\begin{thm}\label{thm:quasiconvexity}
	There exist constants $\Lambda, c$, such that w.o.p. the map from the vertex set $V$ of any hypergraph segment to $X^{(1)}$ is a $(\Lambda, c)$-quasi-isometric embedding. 
\end{thm}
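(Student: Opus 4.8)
The plan is to upgrade the local statement of Theorem \ref{thm:weaknoreturns} (no short returning decompositions) to a global quasi-isometric embedding via the standard "local-to-global for quasigeodesics in hyperbolic spaces" machinery, exactly as in \cite{OW} and \cite{MP}. Since $G$ is hyperbolic (density $d<1/2$), its Cayley graph $X^{(1)}$ is $\delta$-hyperbolic for some $\delta$. First I would record that a hypergraph segment $\gamma$, viewed through a reduced decomposition $\gamma = \gamma_1\cdots\gamma_n$ with tiles $T_i$, has the property that each edge-path $\alpha_i$ realizing $\gamma_i$ inside $T_i$ satisfies $|\alpha_i|\geq \Bal(T_i)\geq \fourl$ by Remark \ref{rmk:balancebounds}, while tiles have uniformly bounded size (Remark \ref{boundedsize}), so $|\alpha_i|\leq \ell/2$; thus the concatenation is a path built from uniformly bounded "jumps" that make definite progress. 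The point of Theorem \ref{thm:weaknoreturns} is that such a path cannot "backtrack" within any bounded window: there is $N_0$ (depending only on $\delta$ and $\ell$) such that no subsegment of length $\leq N_0$ returns at a tile.

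The key steps, in order: (1) Fix the hyperbolicity constant $\delta$ of $X^{(1)}$ and the bound $N = N(\delta,\ell)$ coming from the local-to-global criterion for quasigeodesics (e.g. the standard "$k$-local $(\lambda,c)$-quasigeodesics are global $(\lambda',c')$-quasigeodesics" lemma in a $\delta$-hyperbolic space). (2) Show that every hypergraph segment is an $N$-local quasigeodesic: take any subpath spanning at most a bounded number of consecutive $\gamma_i$'s (enough to have length $\leq N$ in $X^{(1)}$), pass to a reduced decomposition (Lemma preceding Theorem \ref{thm:weaknoreturns}), and invoke Theorem \ref{thm:weaknoreturns} to conclude it does not return at any tile; combined with the fact that such a non-returning bounded piece bounds no disk (so maps into $X^{(1)}$ without folding back), conclude the endpoints are separated linearly in the number of tiles traversed, which in turn is linearly related to the combinatorial length of the subpath. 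This gives uniform local constants $\Lambda,c$. (3) Apply the local-to-global lemma to promote the $N$-local estimate to a genuine $(\Lambda',c')$-quasi-isometric embedding of the whole segment, with constants independent of the segment. (4) Finally, note the vertex set $V$ of the hypergraph segment sits at uniformly bounded distance from the edge-path realizing it (vertices of $V$ are edge-midpoints, each within $\ell/2$ of the path), so the estimate transfers from the path to $V$ itself, changing the constants by a bounded additive amount.

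The main obstacle is step (2): extracting from Theorem \ref{thm:weaknoreturns} a clean linear lower bound $|x_0,x_n|_{X^{(1)}} \geq \Lambda^{-1} n - c$ on the distance between the endpoints of a bounded-length non-returning decomposition. One must argue that if the endpoints were too close, one could fill in a disk diagram $D$ bounded by the hypergraph segment together with a geodesic joining its endpoints, and then run an isoperimetric argument (Proposition \ref{IPI}, as in the proof of Theorem \ref{thm:weaknoreturns}) to force either a return or a violation of the cancellation bound — this is the same mechanism as the no-returns proof but now tracking metric distance rather than just combinatorial topology, so care is needed to control $\Can$ of the union of the $T_i$, the disk $D$, and the geodesic, and to rule out extra 2-cells. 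Once the linear separation is in hand, steps (1), (3), (4) are routine invocations of hyperbolic geometry and the bounded geometry of tiles.
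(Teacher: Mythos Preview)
Your proposal is correct and follows essentially the same approach as the paper: use hyperbolicity of $X^{(1)}$ together with the local-to-global principle for quasigeodesics (the paper invokes \cite{GdlH} Theorem 5.21), reducing to a bilipschitz estimate for bounded-length segments, which in turn is obtained from the no-returns result (Theorem \ref{thm:weaknoreturns}) via an isoperimetric/disk-diagram argument. The paper even records the explicit choice $\lambda = \frac{1}{1-4d}$ and otherwise defers entirely to \cite{MP}, so your sketch is, if anything, more detailed than what appears here.
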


The proof of this is identical to the proof of \cite{MP} Theorem 6.1. While they gave their proof in the specific case that $d < 5/24$, it in fact holds for any tile and balanced tile-wall construction which admits reduced decompositions in which the distance between endpoints of $\gamma_i$ are at least $\Bal(T_i)$.

\begin{proof}[Proof Sketch.] The Cayley graph $X^{(1)}$ of a random group at a fixed density $d < 1/2$ is w.o.p. hyperbolic, with hyperbolicity constant linear in $\ell$. By \cite{GdlH} Theorem 5.21, it suffices to find $\lambda$ such that for some sufficiently large $N = N(\lambda)$, the map to $X^{(1)}$ from any $V$ of cardinality $\leq N$ is bilipschitz. Choose $\lambda = \frac{1}{1-4d}$.
\end{proof}

\begin{thm}
	At density $d<3/14$, all walls $\Gamma$ as constructed in Construction \ref{tileconstruction} are embedded trees.
\end{thm}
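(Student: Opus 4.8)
The plan is to derive the statement directly from two already-established facts: the balance of tile-walls (Theorem~\ref{thm:balancingwalls}) and the absence of finite returning decompositions (the Corollary following Theorem~\ref{thm:weaknoreturns}), all of which hold w.o.p.\ at density $d<3/14$. A wall $\Gamma$ is, by construction, a connected component of the immersed graph obtained by concatenating tile-walls, so connectedness is automatic; it therefore suffices to show that $\Gamma$ contains no cycle and that the map $\Gamma\to X$ is injective.

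First I would rule out cycles. Suppose $\Gamma$ contained an embedded cycle; since $\Gamma$ is a graph this is a finite closed edge-path, which we view as a closed wall-path $\gamma$ based at an edge-midpoint $x$, so that both endpoints of $\gamma$ equal $x$. By the reduction lemma preceding Theorem~\ref{thm:weaknoreturns} (applied, if necessary, to a subpath) and by Lemma~\ref{lemma:fractured decomp}, $\gamma$ admits a reduced decomposition $\gamma=\gamma_1\cdots\gamma_n$ with tiles $T_1,\dots,T_n$ that pairwise share no $2$-cells and with each $\gamma_i$ balanced in $T_i$. If $n=1$ then $\gamma\subset T_1\subset\sh_{T_1}(\gamma)$, and balance gives $0=|x,x|_{T_1}\ge\Bal(\sh_{T_1}(\gamma))\ge\frac{\ell}{4}$ by Remark~\ref{rmk:balancebounds}, a contradiction. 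If $n\ge 2$ then, because the decomposition is reduced, no $T_i$ contains $\bigcup_j T_j$ (that would force $T_i\cup T_{i\pm 1}$ to be a tile), and both endpoints of $\gamma$ lie in any tile $T_0$ containing the edge at $x$; hence $\gamma$ is a returning decomposition of finite length, contradicting the Corollary. Thus $\Gamma$ is acyclic, and being connected it is a tree.

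Next I would prove embeddedness. Suppose two distinct vertices $v\ne w$ of the tree $\Gamma$ had the same image $z\in X^{(1)}$. Since $\Gamma$ is connected there is a finite reduced wall-path $\gamma$ in $\Gamma$ from $v$ to $w$, whose image is a closed loop at $z$, and we run exactly the argument of the previous paragraph with $x$ replaced by $z$: a one-tile decomposition contradicts balance since $|z,z|=0$, and an $n\ge 2$ decomposition is returning, contradicting the Corollary. Therefore $\Gamma\to X$ is injective on vertices; injectivity on edges is then immediate, since an edge of $\Gamma$ joins two midpoints inside a single $2$-cell and is determined by its endpoints. Hence $\Gamma$ embeds in $X$ (and, by Theorem~\ref{thm:quasiconvexity}, does so quasi-isometrically, as needed for the cubulation in the next step).

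Combining the two parts, every wall $\Gamma$ is a connected, acyclic, embedded graph, hence an embedded tree, and this conclusion is w.o.p.\ since its inputs are. I expect the main obstacle to be purely bookkeeping: verifying that a cycle in $\Gamma$ or a self-intersection of $\Gamma\to X$ genuinely yields a \emph{reduced} wall-path decomposition that is \emph{returning} in the precise sense of the definition — in particular ruling out, via the balance inequality, the degenerate possibility that the offending wall-path lies inside a single tile, and invoking Lemma~\ref{lemma:fractured decomp} to arrange that non-adjacent tiles of the decomposition share no $2$-cells so that the no-returns Corollary applies verbatim. No new metric estimate should be required beyond what is already proved.
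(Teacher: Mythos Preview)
Your argument differs from the paper's in a way that exposes a real gap. The paper does \emph{not} deduce embeddedness directly from the Corollary to Theorem~\ref{thm:weaknoreturns}; instead it first invokes Theorem~\ref{thm:quasiconvexity} to obtain fixed constants $\Lambda,c$ (independent of $\ell$) and then observes that a self-intersection forces a subsegment with endpoints $x,x'$ in a common $2$-cell, so $|x,x'|\le \ell/2$, whence the quasi-isometric embedding bounds the combinatorial length of that subsegment by the \emph{fixed} number $N=(2c+1)\Lambda$. Only then does it apply Theorem~\ref{thm:weaknoreturns} with this specific $N$.

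The point is the quantifier order in ``w.o.p.''. The proof of Theorem~\ref{thm:weaknoreturns} applies Proposition~\ref{IPI} to a complex $Y$ whose size bound depends on $N$; thus what is actually established is ``for each fixed $N$, w.o.p.\ there is no returning decomposition of length $<N$''. You instead assume a cycle (or coincidence of images) exists, read off its finite length $n$, and apply the no-returns result with $N=n+1$. But $n$ is not bounded \emph{a priori} independent of $\ell$, so you are implicitly swapping ``$\forall N$, w.o.p.'' to ``w.o.p., $\forall N$'', which is not justified by anything proved. The paper's own decision to route through the quasi-isometric embedding, rather than quote its Corollary verbatim, confirms that the intended content is the weaker quantifier order. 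To repair your argument you must supply an \emph{a priori} length bound on any self-intersecting or cyclic segment; that is exactly what Theorem~\ref{thm:quasiconvexity} provides, and once you insert it your two cases collapse into the paper's single short computation.
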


\begin{proof}
	Suppose a hypergraph segment $\gamma$ self intersects. Then it contains a subpath with endpoints $x, x'$ which are in the same 2-cell, so $|x, x'| \leq \frac{1}{2}\ell.$ Let $n$ be the number of $2$-cells traversed by the subpath of $\gamma$ from $x$ to $x'$. Then by Theorem \ref{thm:quasiconvexity},
	\[
	|x, x'|_\Gamma \geq \frac{1}{\Lambda}\left(\frac{n}{2}\ell\right) - c\ell.
	\]
	Therefore it suffices to take $N = (2c+1)\Lambda$ in Theorem \ref{thm:weaknoreturns}.
\end{proof}

\begin{lemma}\label{lemma:stabilizers}
	There is a wall $\Gamma$ and an element $g \in \Stab(\Gamma)$ which swaps complementary components of $\Gamma$ in $\widetilde{X(\Gamma)}$.
\end{lemma}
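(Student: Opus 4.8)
The plan is to exhibit a single wall together with a symmetry of $X$ that reverses it, and then invoke the standard consequence of Sageev's construction (as in \cite{OW,Sageev}) that a $G$-invariant wallspace with at least one wall stabilized by an element swapping its sides produces a $G$-action on a CAT(0) cube complex with no global fixed point. So the real content is purely local: produce $\Gamma$ and $g$.

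First I would work inside a single $2$-cell, or better, inside the $\T^0$ tile-wall structure before any balancing adjustments matter, and look at a relator $r \in R$ whose cyclic word is a palindrome, or more robustly, exploit that $X$ is the Cayley complex and every $2$-cell has a boundary cycle labelled by a relator $r$. Pick an edge $e$ in the boundary of a $2$-cell $C$ and let $e'$ be its antipode in $C$; the antipodal edge of $\Gamma^0$ through the midpoint of $e$ passes through the midpoint of $e'$. The reflection of the boundary circle of $C$ fixing the midpoints of $e$ and $e'$ is realized by an isometry of $X$ exactly when the relator is (conjugate to) a palindrome in $S \cup S^{-1}$; since $R$ is chosen uniformly among cyclically reduced words, w.o.p. $R$ contains such a word (indeed contains a word of the form $w \overline{w}$ after subdivision, or one can subdivide so that some relator becomes a palindrome), and the corresponding reflection $g \in \Isom(X)$ restricts to the reflection on $C$. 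Then $g$ stabilizes the antipodal wall-path through $e, e'$ and swaps its two endpoints, hence swaps the two sides of the whole wall $\Gamma$ containing it — provided this $\Gamma$ is an embedded tree separating $X$ into exactly two pieces, which we have by the previous theorem.

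The steps, in order: (1) set up the dictionary between walls and hyperplanes and recall that a halfspace system carries an essential $G$-action on a CAT(0) cube complex once no wall is "trivial" and $G$ does not fix a point of the cube complex; reduce Lemma~\ref{lemma:stabilizers} to producing $(\Gamma, g)$ as above. (2) Show, using the randomness of $R$ and a subdivision argument, that w.o.p. we may assume some relator $r_0$ (or a subdivided copy) is a palindrome, so its $2$-cell $C_0$ admits a reflective symmetry $\sigma$ fixing the midpoints of an antipodal pair of edges $e_0, e_0'$. (3) Extend $\sigma$ to an isometry $g$ of the Cayley complex $X$: since $X$ is simply connected and $\sigma$ respects labels and orientations appropriately, $g$ is the deck-type symmetry induced by an automorphism of the presentation; verify $g$ normalizes $G$ (or is in the isometry group permuting the tile collection, which is what we actually need — and it does, since $\T$ and the wall construction only used canonical, equivariant data). (4) Check that $g$ sends the wall $\Gamma \ni$ (midpoint of $e_0$) to itself: $g$ fixes the two endpoints-in-$C_0$ of this wall-path and maps wall-paths to wall-paths because the tile-wall rules in Construction~\ref{wallconstruction} are invariant under label-preserving symmetries; since $\Gamma$ is an embedded tree, being preserved setwise and having one of its edges' endpoints swapped forces $g$ to swap the two complementary components of $\Gamma$ in $\widetilde{X(\Gamma)}$.

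The main obstacle I expect is step (3)–(4): guaranteeing that a local reflective symmetry of one relator genuinely extends to a global symmetry of $X$ that preserves the \emph{entire} wall structure $\Gamma$, not just the initial antipodal segment. The tile-walls were built through a non-canonical inductive process (choices of which tiles to glue, which tile is "older"), so one must argue that the symmetry group action commutes with, or can be made to commute with, these choices — e.g. by choosing the palindromic relator so that its $2$-cell is never merged into a larger tile (which holds w.o.p. since a single $2$-cell with all overlaps $< \ell/4$ stays a $1$-tile), so that near $C_0$ the wall is literally the unadjusted antipodal wall-path and the symmetry argument is clean. One then only needs $g$ to be \emph{a priori} a symmetry of $X^{(1)}$ preserving $R$, which exists w.o.p. by the palindrome argument; that it preserves $\Gamma$ follows because $\Gamma$ is the unique wall through that antipodal segment and $g$ preserves the set of all walls. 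An alternative, if the palindrome approach is delicate, is the Ollivier–Wise argument: take any $g \in G$ of infinite order, note $g$ acts on the set of walls, and a counting/pigeonhole argument on wall crossings shows some power of $g$ (or some conjugate) must invert a wall it stabilizes; this avoids symmetries of $X$ entirely and only uses that walls are quasiconvex, infinite, and separate $X$, all of which we have established.
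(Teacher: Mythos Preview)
Your main approach has a genuine gap. The probability that a uniformly random cyclically reduced word of length $\ell$ is a palindrome (in any of the senses you suggest, including the form $w\overline{w}$) is of order $(2n-1)^{-\ell/2}$, so the expected number of palindromic relators among the $(2n-1)^{d\ell}$ words in $R$ is $(2n-1)^{(d-1/2)\ell}\to 0$ for every $d<1/2$. Hence w.o.p.\ $R$ contains \emph{no} palindromic relator --- the opposite of what you assert --- and subdividing edges does nothing to change this count.

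Even granting a palindromic relator, your step~(3) would still fail. A reflective symmetry of a single $2$-cell extends to an isometry of the Cayley complex only if the entire relator set $R$ is invariant under the corresponding label-reversing automorphism of the free group; a random presentation has no such global symmetry w.o.p. In particular there is no element of $G$ (nor of $\Isom(X)$) realizing your local reflection, so the wall structure --- which was built $G$-equivariantly, not $\Isom(X)$-equivariantly --- need not be preserved by anything you have constructed.

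The paper does not argue this way at all; it simply cites \cite{MP}, Lemma~6.2. That argument uses no symmetry of the presentation: it exploits the fact that there are only finitely many $G$-orbits of edges, hence of walls, so that $\Stab(\Gamma)$ acts cocompactly on the infinite tree $\Gamma$, and then extracts a side-swapping element directly as an element of $G$. Your closing alternative (a pigeonhole on wall crossings along powers of an infinite-order $g$) is much closer in spirit to this, but as written it stops exactly at the non-trivial point: having some $g^m\in\Stab(\Gamma)$ does not by itself force $g^m$ to \emph{swap} the two sides rather than preserve them. That is precisely the content of the lemma, and you should consult \cite{MP} for how that step is carried out.
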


\begin{proof}
	The proof is identical to \cite{MP} Lemma 6.2. We provide a sketch of the proof here for completeness: a counting argument demonstrates that there is a relator $r$ (in fact, w.o.p. we can take this to be the first relator, $r_1$) containing two antipodal occurrences of the same generator. We now want to show that the tile-walls on the 2-cell corresponding to $r$ are antipodal. Corollary 2.10 of \cite{MP} says, in essence, that given a pre-determined relator $r_1$, w.o.p. there is no potile containing a 2-cell corresponding to $r_1$ except for single 2-cells. Let $T$ be any tile containing a 2-cell corresponding to $r_1$. Then $|T| = 1$, so every tile-wall in $T$ is antipodal. Let $e, e'$ be antipodal edges corresponding to the same generator, connected by a wall $\Gamma$. Then there exists $g \in G$ so that $ge = e'$, so $g$ stabilizes $\Gamma$ and $g$ exchanges complementary components of $\Gamma$.
\end{proof}

\begin{lemma}
	There is a wall $\Gamma$ which has essential complementary components in $\widetilde{X(G)}$.
\end{lemma}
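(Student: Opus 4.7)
The plan is to take the specific wall $\Gamma$ and element $g \in \Stab(\Gamma)$ furnished by Lemma \ref{lemma:stabilizers}, which swaps the two complementary components $H^+$ and $H^-$ of $\Gamma$ in $\widetilde{X(G)} = X$. My goal is to show that each of $H^\pm$ contains vertices at arbitrarily large $X^{(1)}$-distance from $\Gamma$, which is the standard notion of essential complementary components required to ensure that Sageev's dual cube complex admits a non-trivial $G$-action.

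Since $g$ interchanges the two distinct components $H^+$ and $H^-$, it is a nontrivial element of $G$; and at density $d<1/2$ the random group $G$ is w.o.p.\ torsion-free and hyperbolic, so $g$ has infinite order and acts on the hyperbolic Cayley graph $X^{(1)}$ as a loxodromic isometry with positive stable translation length. The key observation is that $g^2$ preserves each of $H^+$ and $H^-$ set-wise (since $g(H^+) = H^-$ and $g(H^-) = H^+$). Fix a vertex $x \in H^+$; then the orbit $\{g^{2n} \cdot x : n \in \Z\}$ lies entirely in $H^+$, and loxodromicity forces $d_{X^{(1)}}(x, g^{2n} x) \to \infty$ as $|n| \to \infty$. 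Hence $H^+$ has infinite diameter and contains vertices arbitrarily far from $\Gamma$. The symmetric argument applied to any vertex of $H^-$ shows $H^-$ is essential as well, so both complementary components of $\Gamma$ are essential and the lemma follows.

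I do not anticipate any substantive obstacle in this argument: once Lemma \ref{lemma:stabilizers} is in hand, the rest reduces to standard dynamics of infinite-order elements in torsion-free hyperbolic groups. The only point requiring verification is that $g$ is nontrivial (hence of infinite order, hence loxodromic on $X^{(1)}$), which is immediate from its swapping of distinct components together with the torsion-freeness of $G$ at density $d<1/2$. An alternative route would be to apply Theorem \ref{thm:quasiconvexity} to deduce that $\Gamma$ is an infinite quasi-isometrically embedded tree on which $\langle g \rangle$ acts by translations along an axis, and then extract unbounded orbits adjacent to $\Gamma$ lying in $H^\pm$; but the loxodromic dynamics argument above is the cleanest.
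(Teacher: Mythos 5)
Your argument has a genuine gap. You define "essential" correctly as containing vertices at arbitrarily large $X^{(1)}$-distance from $\Gamma$, but the orbit $\{g^{2n}x\}$ that you use to witness this does \emph{not} escape to infinity away from $\Gamma$: since $g \in \Stab(\Gamma)$, for every $n$ we have
\[
d_{X^{(1)}}(g^{2n}x, \Gamma) = d_{X^{(1)}}(g^{2n}x, g^{2n}\Gamma) = d_{X^{(1)}}(x, \Gamma),
\]
so the entire orbit remains at \emph{constant} distance from $\Gamma$. Loxodromicity of $g$ does give you that $H^+$ has infinite diameter in $X^{(1)}$, but this is strictly weaker than essentiality --- an infinite-diameter component can still be contained in a bounded neighborhood of the wall. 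Your alternative sketch (translations along an axis inside $\Gamma$) suffers from the same defect: the axis lies near $\Gamma$, so it produces no deep points.

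The paper's proof takes a different, global route that avoids this trap. Using the swapping element from Lemma \ref{lemma:stabilizers}, the two components are essential or not \emph{simultaneously}. If neither is essential, then $\widetilde{X(G)} \subset N_R(\Gamma)$ for some $R$, so $G$ is quasi-isometric to the embedded tree $\Gamma$ and hence virtually free; but random groups at $d<1/2$ are one-ended, contradiction. The crucial idea you are missing is precisely this contrapositive step: instead of exhibiting deep points directly, deduce from their absence a global structural consequence (quasi-isometry to a tree) incompatible with known properties of $G$.
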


\begin{proof}
	Choose a wall $\Gamma$ from the walls constructed in \ref{wallconstruction}. Then the complementary components of $\Gamma$ are either both essential or both non-essential.  Suppose that the complementary components are not essential. Since $\Gamma$ is an embedded tree, there is some constant $R>0$ so that $\widetilde{X(G)} \subset N_R(\Gamma)$, so $G$ is quasi-isometric to a tree. However, $G$ is 1-ended by \cite{DGP}, so it is not free and this is impossible.
\end{proof}

We are now ready to prove Theorem \ref{main theorem}.

\begin{proof}[Proof of Theorem \ref{main theorem}.]
	Let $H$ be the index 2 subgroup of $\Stab(\Gamma)$ which preserves the components of $X - \Gamma$. The number of relative ends of $H$ is greater than 1, so by \cite{Sageev}, there is a CAT(0) (finite dimensional) cube complex on which $G$ acts non-trivially cocompactly by isometries.
\end{proof}


\bibliographystyle{alpha}
\bibliography{thesis}


\end{document}